\newtheorem{thm}{Theorem}[section]
\newtheorem{rem}[thm]{Remark}
\numberwithin{equation}{section}
\newcommand{\R}{\mathbb R}
\def\TagOnRight
\def\R {\mathbb{R}}
\newcommand{\be}{\begin{equation}}
\newcommand{\ee}{\end{equation}}
\newcommand{\bea}{\begin{eqnarray}}
\newcommand{\eea}{\end{eqnarray}}
\newcommand{\Bea}{\begin{eqnarray*}}
\newcommand{\Eea}{\end{eqnarray*}}
\newcommand{\bt}{\begin{Theorem}}
\newcommand{\et}{\end{Theorem}}
\newcommand{\bpr}{\begin{Proposition}}
\newcommand{\epr}{\end{Proposition}}
\newcommand{\bl}{\begin{Lemma}}
\newcommand{\el}{\end{Lemma}}
\newcommand{\bi}{\begin{itemize}}
\newcommand{\ei}{\end{itemize}}
\newtheorem{Definition}{Definition}[section]
\newtheorem{Theorem}[Definition]{Theorem}
\newtheorem{Lemma}[Definition]{Lemma}
\newtheorem{Proposition}[Definition]{Proposition}
\newtheorem{Corollary}[Definition]{Corollary}
\newtheorem{Remark}[Definition]{Remark}
\begin{document}
\baselineskip16pt
\title[Global Cauchy problems for the HNLKG, HNLW and HNLS]{Global Cauchy problems for the Klein-Gordon, wave and fractional Schr\"odinger equations with Hartree   nonlinearity   on modulation spaces}
\author{Divyang G. Bhimani}
\address{Department of Mathematics\\
University of Maryland\\
College Park\\
MD 20742}
\email{dbhimani@math.umd.edu}

\subjclass[2010]{35L71, 35Q55, 42B35 (primary), 35A01 (secondary)}
\keywords{Klein-Gordon-Hartree equation, fractional Hartree equation, wave-Hartree equation, well-posedness, modulation spaces, small initial data}

\maketitle
\begin{abstract}  We study  Cauchy problem for the  Klein-Gordon (HNLKG), wave (HNLW) and  Schr\"odinger (HNLS)  equations  with cubic convolution (Hartree type) nonlinearity.   
Some global well-posedness and  scattering  are obtained for the (HNLKG) and (HNLS)   with small Cauchy data in some modulation spaces. Global well-posedness for fractional  Schr\"odinger (fNLSH) equation with Hartree type nonlinearity is obtained with  Cauchy data in some modulation spaces. Local well-posedness for (HNLW), (fHNLS)  and (HNLKG) with rough data in modulation spaces  is shown.  This improves  known results in Sobolev spaces in some sense.   As a consequence, we get local and global well-posedness and scattering in  larger than usual $L^p-$Sobolev spaces and we could include wider class of Hartree type nonlinarity. 
 
\end{abstract}
\section{Introduction and statement of results}
The Cauchy problem (local and global existence, regularity and scattering theory) for (HNLKG), (HNLW) and (fHNLS) has been extensively studied with Cauchy data in  $L^2-$based Sobolev spaces (see e.g., \cite{cazenave2003semilinear, hidano2000small, menzala1982wave, miao2011global, miao2014energy, miao2015scattering, cheng2012small, cho2013cauchy, mochizuki1989small, tsutaya2014scattering}).
There has been a  considerable mathematical interest  concerning the low regularity well-posedness  and scattering theory for the nonlinear  dispersive equations.  Generally the Cauchy data in a modulation spaces are rougher than any given one in a fractional Bessel potential space  and this low regularity is desirable in many situations.

This section is divided into two subsections. Sections \ref{ikw} and \ref{shte} will present well-posedness and scattering  results for  (HNLKG)/(HNLW) and (fHNLS)  with Cauchy data in modulation spaces respectively.

\subsection{Klein-Gordon-Hartree and wave-Hartree equations }\label{ikw}
We study the Cauchy problem for the   Klein-Gordon  and wave equations  with Hartree type  nonliearity 
\begin{equation}\label{nlkgh}
u_{tt}+ (I-\Delta)u = (V\ast |u|^{2})u, 
u(0)=u_0, u_t(0)=u_1
\end{equation}
and
\begin{equation}
\label{nlw}
 u_{tt}- \Delta u = (V\ast |u|^{2}), ~u(0)=u_{0}, u_{t}(0)=u_{1}, 
\end{equation}
where $u(t,x)$ is a complex valued function of $(t,x)\in \mathbb R \times \mathbb R^d$,  $i=\sqrt{-1},$  $u_t=\frac{\partial}{\partial t}, u_{tt}=\frac{\partial^2}{\partial^2 t},$ $I$ is the identity operator, $\Delta$ is the Laplace operator,   $u_0$ and $u_1$ are complex valued functions of $x\in \mathbb R^d,$ $\ast$ denotes the convolution in $\mathbb R^d,$ and  
$V$ is of the following type:
\begin{eqnarray}\label{hk}
\   \ \ \ \ \ \ \ \ \ \ \ \  \ \ \ \   \   \  \   \ V(x)= \frac{\lambda}{|x|^{\gamma}}, (\lambda \in \mathbb R, x\in \mathbb R^{d}, 0<\gamma<d).
\end{eqnarray} 
The stationary equations
\begin{eqnarray*}\label{sq}
-\Delta u + (V\ast |u|^2)u= \sigma u
\end{eqnarray*}
is obtained by looking for separated solutions of \eqref{nlkgh} and \eqref{nlw}, where $u=e^{i\lambda t}u(x)$($\sigma = \lambda^2 -1$ and $\sigma = \lambda^2$). In case  $V(x)=|x|^{-1}$,   the stationary equations was proposed by Hartree as a model for the helium atom. Thus the homogeneous kernel of the form \eqref{hk} is known as Hartree potential. 

  Menzala-Strauss \cite{menzala1982wave} have studied the well-posdedness and asymptotic behavior of  equations \eqref{nlkgh} and \eqref{nlw}. Mochizuki  \cite{mochizuki1989small}  and Hidano \cite{hidano2000small} have studied scattering  theory  in the  energy space (see also \cite{cheng2012small, tsutaya2014scattering}). Recently Miao-Zhang \cite{miao2011global, miao2014energy} and Miao-Zhang-Zheng \cite{miao2015scattering} 
 have  studied  global well-posedness and scattering theory for equations \eqref{nlkgh} and \eqref{nlw} below energy space.  We remark that  all previous authors have studied equations \eqref{nlkgh} and \eqref{nlw} on $L^2-$based Sobolev spaces.  Mainly because  generally Klein-Gordon $ G(t)=e^{it(I-\Delta)^{1/2}}$ and wave $W(t)=e^{it(-\Delta)^{1/2}}$ semigroups   fails to be bounded  on $L^p(\R^d)$ if $p\neq2.$ Hence we cannot expect to solve  equations \eqref{nlkgh} and \eqref{nlw} in  $L^p(\R^d) (p\neq 2)-$spaces .   The question arises if it is possible to remove $L^2$ constraint and consider equations \eqref{nlkgh}
and  \eqref{nlw}  in function spaces which are not $L^2$ based.

 This question has inspired to study  equations \eqref{nlkgh}  and \eqref{nlw} in other function spaces (e.g., modulation spaces  $M^{p,q}(\mathbb R^d)$, see Defintion \ref{ms} below) arising in harmonic analysis. Pioneering  steps in this direction was taken by Baoxiang-Lifeng-Boling \cite{baoxiang2006isometric},  Baoxiang-Hudzik \cite{wang2007global} and   B{\'e}nyi-Gr{\"o}chenig-Okoudjou-Rogers  \cite{benyi2007unimodular}. In fact,  in \cite{wang2007global}  it is  proved that  Klein-Gordon  equation with power type nonlinarity is globally well-posed  with  small Cauchy data in $M^{2,1}(\mathbb R^d).$   In  \cite{benyi2007unimodular, baoxiang2006isometric}  it is proved that the Fourier multiplier operator with multiplier  $e^{it|\xi|^{\alpha}}  \ (\alpha \in [0,2])$ is bounded on  $M^{p,q}(\R^d) \ (1\leq p,q \leq \infty).$  (The cases  $\alpha =1$ and  $\alpha=2$  occurs in the time evolution of the free  wave and Schr\"odinger  equations respectively.)
  Many authors \cite{ruzhansky2012modulation, chen2017non,  benyi2009local, cordero2009remarks,  zhao2014klein,  kato2016solutions, bhimani2016functions}  have studied Klein-Gordon  and wave equations with power type  nonliterary in  modulation spaces.  
However,   there is not much  progress concerning  well-posedness and scattering theory  for the equations \eqref{nlkgh} and \eqref{nlw} in modulation spaces.

Taking these considerations into account, we are inspired  to study  equations \eqref{nlkgh} and \eqref{nlw}   with Cauchy data in modulation spaces.  To sate results,  we set up notations.  Set  $2 \sigma (p)= (d+2) \left( \frac{1}{2} - \frac{1}{p}\right) \ (2<p< \infty, d\in \mathbb N), 1/p+1/p'=1.$ 
We call pair $(p,r)$ is   \textbf{ Klein-Gordon admissible}  if there exists another exponent $\beta$ such that 
\begin{eqnarray}\label{kga}
\begin{cases}  \frac{1}{\beta} + \frac{2}{r}=1,\\
\frac{1}{3} \leq \frac{1}{\beta} \leq \frac{d}{d+2}\wedge d \left( \frac{1}{2} - \frac{1}{p} \right),\\
 \frac{1}{4} \leq p < \frac{1}{2}- \frac{1}{3d}.
\end{cases}
\end{eqnarray}
 We remark that if pair $(p,r)$ is  Klein-Gordon admissible, then  $3\leq r < \infty$ and $rd \left( \frac{1}{2} - \frac{1}{p} \right)>1$.
We are now ready to  state  following  theorem.
\begin{Theorem}[Global well-posedness]\label{mt} Let $2<p<3,  \frac{1}{p}+\frac{\gamma}{d}-1= \frac{1}{2p'},  \ s\in \R,$ and  pair $(p,r)$ is Klein-Gordon admissible.  
Assume that $(u_0, u_1)\in  M^{p',1}_{s+ 2\sigma(p)}(\R^d) \times M^{p',1}_{s+2 \sigma(p)-1}(\R^d)$ and there exists a small $\delta>0$ such that $\|u_0\|_{M^{p',1}_{s+ 2\sigma(p)}} + \|u_1\|_{ M^{p',1}_{s+2 \sigma(p)-1}} \leq \delta.$  Then  \eqref{nlkgh} has a unique  global solution 
\[ u \in  C(\R, M^{p,1}_s(\R^d))\cap C^{1}(\R, M_{s-1}^{p,1}(\R^d)) \cap L^{r}(\R, M_s^{p,1}(\R^d)).\]
One also has the bound
 $\|u\|_{ L^{r}(\R, M_s^{p,1}(\R^d))} \lesssim \|u_0\|_{M^{p',1}_{s+ 2\sigma(p)}} + \|u_1\|_{ M^{p',1}_{s+2 \sigma(p)-1}}.$
\end{Theorem}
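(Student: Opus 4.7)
My plan is to run a contraction mapping argument on the Duhamel form of \eqref{nlkgh}. Writing $\langle D\rangle = (I-\Delta)^{1/2}$, $K(t) = \sin(t\langle D\rangle)/\langle D\rangle$ and $K'(t) = \cos(t\langle D\rangle)$, the equation becomes the integral equation
\[
u(t) \;=\; K'(t)u_0 \;+\; K(t)u_1 \;+\; \int_0^t K(t-\tau)\,\mathcal{N}(u)(\tau)\,d\tau, \qquad \mathcal{N}(u) = (V\ast |u|^2)\,u.
\]
I would work in the complete metric space
\[
X \;=\; \bigl\{\,u \in C(\R, M^{p,1}_s) \cap C^1(\R, M^{p,1}_{s-1}) \cap L^r(\R, M^{p,1}_s) \,:\, \|u\|_X \leq 2C\delta\,\bigr\},
\]
with $\|u\|_X := \|u\|_{L^\infty_t M^{p,1}_s} + \|u_t\|_{L^\infty_t M^{p,1}_{s-1}} + \|u\|_{L^r_t M^{p,1}_s}$, endowed with the obvious metric. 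The goal is to show that the map $\Phi(u)$ defined by the right-hand side above is a contraction on $X$, so that the unique fixed point yields the desired global solution, with the stated $L^r_tM^{p,1}_s$ bound following from the linear estimate.

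The first ingredient is a Strichartz-type inequality for the Klein-Gordon propagator on modulation spaces. I would appeal to (and, if needed, adapt the proofs of) the dispersive estimates for $e^{it\langle D\rangle}$ of Bényi--Gröchenig--Okoudjou--Rogers and Baoxiang--Lifeng--Boling, together with the $TT^\ast$ duality argument, to produce
\[
\|K'(t)u_0 + K(t)u_1\|_{L^\infty_t M^{p,1}_s \cap L^r_t M^{p,1}_s} \;\lesssim\; \|u_0\|_{M^{p',1}_{s+2\sigma(p)}} + \|u_1\|_{M^{p',1}_{s+2\sigma(p)-1}},
\]
together with the corresponding retarded inequality controlling $\bigl\|\int_0^t K(t-\tau)F(\tau)\,d\tau\bigr\|_{L^\infty_t M^{p,1}_s \cap L^r_t M^{p,1}_s}$ by $\|F\|_{L^{\beta'}_t M^{p',1}_{s+2\sigma(p)-1}}$. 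The derivative estimate in $L^\infty_t M^{p,1}_{s-1}$ follows by differentiating the Duhamel formula in $t$ and applying the same propagator bounds (one loses a derivative, which is absorbed by the weight shift from $s$ to $s-1$).

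The heart of the proof, and where I expect the main difficulty to lie, is the nonlinear estimate of Hartree type in modulation spaces. I need to show
\[
\|(V\ast |u|^2)u\|_{L^{\beta'}_t M^{p',1}_{s+2\sigma(p)-1}} \;\lesssim\; \|u\|_{L^r_t M^{p,1}_s}^2 \,\|u\|_{L^\infty_t M^{p,1}_s},
\]
and the analogous trilinear difference estimate. This splits into (a) a pointwise-in-$t$ estimate bounding $(V\ast|u|^2)u$ in $M^{p',1}_{s+2\sigma(p)-1}$ by a product of three modulation-space norms of $u$, and (b) a Hölder-in-time step exploiting $\frac{1}{\beta'} = \frac{2}{r} + 0$. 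Step (a) requires two facts: first, the Banach-algebra structure of $M^{p,1}_s$ (so that $|u|^2 \in M^{p,1}_s$ and the product with $u$ is well controlled), and second, a Hardy--Littlewood--Sobolev-type convolution inequality in modulation spaces of the form $\|V\ast f\|_{M^{q_2,1}} \lesssim \|f\|_{M^{q_1,1}}$ when $\frac{1}{q_1} + \frac{\gamma}{d} - 1 = \frac{1}{q_2}$. The scaling condition $\frac{1}{p} + \frac{\gamma}{d} - 1 = \frac{1}{2p'}$ assumed in the theorem is exactly what makes these exponents line up (after accounting for the weight shift by $2\sigma(p)-1$); the range $2<p<3$ and Klein-Gordon admissibility of $(p,r)$ guarantee the required embeddings and the inequality $\frac{1}{\beta'} \leq \frac{2}{r}+\frac{1}{\infty}$ needed for the Hölder step. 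Once (a) and (b) are established, the Strichartz estimates combine with the nonlinear bound in the standard way to yield
\[
\|\Phi(u)\|_X \;\leq\; C\delta + C\|u\|_X^3 \;\leq\; 2C\delta, \qquad \|\Phi(u)-\Phi(v)\|_X \;\leq\; C\|u-v\|_X\bigl(\|u\|_X^2+\|v\|_X^2\bigr),
\]
and the smallness of $\delta$ closes the fixed-point argument and produces the asserted global solution and its $L^r_tM^{p,1}_s$ bound.
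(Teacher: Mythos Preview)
Your overall plan---contraction on the Duhamel formulation using dispersive estimates for $e^{it\langle D\rangle}$ together with a modulation-space Hardy--Littlewood--Sobolev/algebra bound on the Hartree nonlinearity---is exactly the paper's strategy. But the retarded estimate you write down does not close as stated. You place $F=\mathcal{N}(u)$ in $L^{\beta'}_t M^{p',1}_{s+2\sigma(p)-1}$ with $\beta'=r/2$ and then H\"older as $\tfrac{2}{r}=\tfrac{1}{r}+\tfrac{1}{r}+0$. Two things break. First, when $(d+2)(\tfrac12-\tfrac1p)>1$ (which occurs in the admissible range, e.g.\ $d\ge 5$ and $p$ close to $3$) the weight $s+2\sigma(p)-1$ exceeds $s$, so there is no way to bound $\mathcal{N}(u)$ in that space from $u\in M^{p,1}_s$; you cannot gain regularity. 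Second, the $TT^\ast$ mechanism you invoke is $L^2$-based and does not deliver that inhomogeneous estimate in this setting; attempting it via Young's inequality in time would require $(1+|t|)^{-d(1/2-1/p)}\in L^{r'}_t$, i.e.\ $d(\tfrac12-\tfrac1p)>1-\tfrac1r$, which is strictly stronger than the admissibility hypothesis $d(\tfrac12-\tfrac1p)\ge \tfrac1\beta=1-\tfrac2r$ and fails for admissible pairs near $\tfrac1\beta=\tfrac13$.

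The missing device is the \emph{interpolated} dispersive bound
\[
\|G(t)f\|_{M^{p,1}_s}\lesssim (1+|t|)^{-\theta d(1/2-1/p)}\|f\|_{M^{p',1}_{s+\theta\cdot 2\sigma(p)}},\qquad \theta\in[0,1],
\]
with $\theta$ chosen (case by case across the admissible range) so that $\theta\cdot 2\sigma(p)\le 1$; then the $\langle D\rangle^{-1}$ in $K(t)$ brings the weight on $F$ down to $s$, while the weakened time decay $\theta d(\tfrac12-\tfrac1p)$ is still large enough to run either Young's inequality or one-dimensional HLS in the time variable. The output is
\[
\Bigl\|\int_0^t K(t-\tau)F(\tau)\,d\tau\Bigr\|_{L^r_t M^{p,1}_s}\lesssim \|F\|_{L^{r/3}_t M^{p',1}_s},
\]
not $L^{\beta'}_t$, after which the pointwise bound $\|\mathcal{N}(u)\|_{M^{p',1}_s}\lesssim \|u\|_{M^{p,1}_s}^3$ and H\"older $\tfrac{3}{r}=\tfrac1r+\tfrac1r+\tfrac1r$ close the contraction in $X=L^r_t M^{p,1}_s$ alone. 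The paper then verifies $u\in C(\R,M^{p,1}_s)\cap C^1(\R,M^{p,1}_{s-1})$ \emph{after} the fixed point is found, via density and dominated convergence, rather than building those norms into the contraction space.
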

Noticing  $L^{p}_s(\mathbb R^d) \subset M^{p,1}(\mathbb R^d)$ for $s>d$ and  taking  $s= -2 \sigma(p)$ (see Theorem \ref{msk} below),  Theorem \ref{mt}
 reveals that we can  control initial   Cauchy data beyond $L^p_s-$Sobolev spaces.   To prove  Theorem \ref{mt} we use some algebraic properties (see Proposition \ref{gap} below)   and the integrability of time decay   terms  for Klein-Gordon semigroup: 
 \[  \|G(t) f\|_{M^{p,q}_s} \lesssim (1+ |t|)^{-d\theta  (1/2-1/p)} \|f\|_{M^{p',q}_{s+ \theta 2 \sigma(p)}},  \]
where $s\in \mathbb R, 2\leq p \leq \infty, 1 \leq q < \infty, \theta \in [0,1]$ (see Proposition \ref{wp} below).  We remark that there is no singularity at $t=0$ and but preserve the same decay as in  the  below  $L^{p}-L^{p'}$ estimate of $G(t).$  This is  a  special  characteristic of modulation spaces. Recall standard $L^{p}-L^{p'}$ estimate of $G(t):$ \[ \|G(t)\|_{L^{p}_{2\sigma(p)}} \leq  C |t|^{-d \left(1/2-1/p\right)} \|f\|_{L^{p'}}   \  \   \ (2\leq p < \infty) \]
and since $|t|^{-d  \left(1/2-1/p\right)}$ is not integrable,  we do  not know whether we can use the similar argument under $L^p,$ Besov, or  Sobolev spaces.

Theorem \ref{mt} reveals that we have   $L^{r}_t(\mathbb R, M^{p,1}_s)$ bound for the solution of \eqref{nlkgh} if the initial data is small enough. This implies we obtain  scattering. Specifically, we have 
\begin{Corollary}[Scattering] \label{mts} Let $u_0 \in M^{p,1}_s(\R^d), u_1 \in M^{p,1}_{s-1}(\R^d),$ and let $u$ is the global solution to \eqref{nlkgh} such that
$ \|u\|_{L^{r}_t(\mathbb R, M^{p,1}_s)} \leq M $
for some  constant $M>0.$  Then there exist $v_{1}^{\pm},v_{2}^{\pm} \in M^{p,1}_s(\R^d)$ such that $v^{\pm}= G(t)v_{1}^{\pm} + G(t)v_2^{\pm}$ are solutions  to the free Klein-Gordon equation $u_{tt} + (I-\Delta)u=0$ and 
\[\|u(t)-v^{\pm}\|_{M^{p,1}_s} \to 0 \  \text{as} \  t \to  \pm \infty. \]
\end{Corollary}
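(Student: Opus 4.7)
The plan is to use Cook's method (Duhamel-based scattering) adapted to the modulation-space setting. Writing the free Klein-Gordon evolution as a sum $G(t)\pm G(-t)$ acting on the Cauchy data, the solution $u$ to \eqref{nlkgh} admits the Duhamel representation
\begin{equation*}
u(t)=G(t)\Phi_{0}+G(-t)\Phi_{1}+\int_{0}^{t}\mathcal{N}(t-\tau)\,F(u)(\tau)\,d\tau,
\end{equation*}
where $F(u)=(V\ast|u|^{2})u$, $\Phi_{0},\Phi_{1}$ are the usual linear combinations of $u_{0},u_{1}$ produced by the spectral decomposition of $(I-\Delta)^{1/2}$, and $\mathcal{N}(t)$ is a linear combination of $G(\pm t)\langle D\rangle^{-1}$. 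My candidates for the scattering states are
\begin{equation*}
v_{1}^{+}=\Phi_{0}+\int_{0}^{\infty}\!G(-\tau)\langle D\rangle^{-1}F(u)(\tau)\,d\tau,\qquad v_{2}^{+}=\Phi_{1}-\int_{0}^{\infty}\!G(\tau)\langle D\rangle^{-1}F(u)(\tau)\,d\tau,
\end{equation*}
and symmetrically for $t\to-\infty$; then $v^{+}(t):=G(t)v_{1}^{+}+G(-t)v_{2}^{+}$ solves the free Klein-Gordon equation, and the task reduces to showing (a) both integrals converge absolutely in $M^{p,1}_{s}(\R^{d})$, and (b) the tails $\int_{t}^{\infty}\!\ldots\,d\tau$ go to zero in $M^{p,1}_{s}$ as $t\to+\infty$.

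The key estimate is: for $\tau>0$,
\begin{equation*}
\bigl\| G(\pm(t-\tau))\langle D\rangle^{-1}F(u)(\tau)\bigr\|_{M^{p,1}_{s}}\lesssim (1+|t-\tau|)^{-d\theta(1/2-1/p)}\,\bigl\|F(u)(\tau)\bigr\|_{M^{p',1}_{s-1+\theta\,2\sigma(p)}},
\end{equation*}
obtained from Proposition \ref{wp} with a suitable $\theta\in[0,1]$ chosen so that the time-weight $(1+|t-\tau|)^{-d\theta(1/2-1/p)}$ is in $L^{r'}_{\tau}$ (possible because $rd(1/2-1/p)>1$ in the Klein-Gordon admissible range). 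The Hartree nonlinearity is then controlled by invoking the algebraic/multiplier properties of Proposition \ref{gap} together with the Hardy-Littlewood-Sobolev-type estimate forced by the scaling identity $1/p+\gamma/d-1=1/(2p')$, giving a bound of the shape
\begin{equation*}
\bigl\|F(u)(\tau)\bigr\|_{M^{p',1}_{s-1+\theta\,2\sigma(p)}}\lesssim \|u(\tau)\|_{M^{p,1}_{s}}^{3};
\end{equation*}
this is exactly the nonlinear estimate already used to prove Theorem \ref{mt}.

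Combining these two displays and applying Young/Hölder in the $\tau$ variable against the hypothesis $\|u\|_{L^{r}_{t}(\R,M^{p,1}_{s})}\leq M$ produces
\begin{equation*}
\Bigl\|\int_{t_{1}}^{t_{2}}\!G(\pm(t-\tau))\langle D\rangle^{-1}F(u)(\tau)\,d\tau\Bigr\|_{M^{p,1}_{s}}\lesssim \Bigl\|(1+|\cdot|)^{-d\theta(1/2-1/p)}\Bigr\|_{L^{r'}(t_{1},t_{2})}\,\|u\|_{L^{r}(t_{1},t_{2};M^{p,1}_{s})}^{3},
\end{equation*}
and both factors tend to $0$ as $t_{1},t_{2}\to+\infty$. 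This simultaneously proves that $v_{1}^{+},v_{2}^{+}\in M^{p,1}_{s}$ and that $\|u(t)-v^{+}(t)\|_{M^{p,1}_{s}}\to 0$; the $-\infty$ case is identical.

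The main obstacle I anticipate is the bookkeeping at the endpoint $\theta$: one must simultaneously satisfy (i) $\theta\,2\sigma(p)\leq 1$ (so that, after absorbing $\langle D\rangle^{-1}$, the smoothing index on the right-hand side matches the $M^{p,1}_{s}$-trilinear Hartree estimate of Theorem \ref{mt}), and (ii) $d\theta(1/2-1/p)\cdot r'>1$ (so that the decay weight lies in $L^{r'}_{\tau}$). The Klein-Gordon admissibility conditions \eqref{kga} are tailored precisely to make such a $\theta$ exist, so verifying this compatibility is the only delicate point; once $\theta$ is fixed, the convergence and scattering claims follow by routine dominated-convergence arguments.
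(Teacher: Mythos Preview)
Your approach is essentially the paper's own: Cook's method via Duhamel, decay estimate for $G(t)$ from Proposition~\ref{wp}, the trilinear Hartree bound $\|F(u)\|_{M^{p',1}_{s}}\lesssim\|u\|_{M^{p,1}_{s}}^{3}$, and H\"older in time. The paper defines $v_{1}(t),v_{2}(t)$ exactly as you do, shows they are Cauchy in $M^{p,1}_{s}$, and passes to the limit.

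There is, however, a concrete bookkeeping error in your H\"older step. You place the time weight in $L^{r'}_{\tau}$ and pair it against $\|u\|_{L^{r}_{t}}^{3}$, but $\|u(\tau)\|_{M^{p,1}_{s}}^{3}$ lives in $L^{r/3}_{\tau}$, not $L^{r}_{\tau}$; the correct dual exponent is therefore $\tilde\beta:=(r/3)'=r/(r-3)$, not $r'=r/(r-1)$. With $r'$ your condition~(ii) becomes $d\theta(1/2-1/p)>1-1/r$, which together with (i) $\theta\,2\sigma(p)\le 1$ forces $r<(d+2)/2$ --- incompatible with $r\ge 3$ unless $d\ge 5$, and even then not for the full admissible range. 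Replacing $r'$ by $\tilde\beta$ repairs this: the paper observes that Klein-Gordon admissibility gives $d(1/2-1/p)\ge 1/\beta=1-2/r$, hence (with the $\theta$ already supplied by Proposition~\ref{nel}) $\tilde\beta\,\theta d(1/2-1/p)\ge (r-2)/(r-3)>1$, so the weight does lie in $L^{\tilde\beta}_{\tau}$ and the argument closes. Once you make this correction, your proof coincides with the paper's.
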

 It remains  open   to get the global well-posedness for equations  \eqref{nlkgh}  and \eqref{nlw} and   for the large data in modulation spaces.  However, we  can get local existence with persistency   of solutions.  Specifically, we have 
\begin{Theorem}[Local wellposedness]\label{LW}
 Let  $V$ is given by \eqref{hk} and   $ X=M^{p,q}(\mathbb R^d)  ( 1\leq p \leq 2, 1\leq q < \frac{2d}{d+\gamma})$ or  $M_s^{p,1}(\mathbb R^d) \ (1<p<\infty, s \in \mathbb R,  \frac{1}{p}+\frac{\gamma}{d}-1= \frac{1}{p+\epsilon}, \epsilon>0).$  Assume that $u_{0}, u_{1}\in X.$ Then 
\begin{enumerate}
\item  \label{lw1}   there exists $T^{*}=T^{\ast}(\|u_{0}\|_{X},\|u_{1}\|_{X})$ such that \eqref{nlkgh} has a unique solution $u\in C([0, T^{*}), X).$  Moreover, if $T^{\ast} < \infty$, then 
$\limsup_{t\to T^{\ast}} \|u(\cdot, t)\|_{X} = \infty.$
\item \label{lw2}  there exists $T^{*}=T^{\ast}(\|u_{0}\|_{X},\|u_{1}\|_{X})$ such that \eqref{nlw} has a unique solution $u\in C([0, T^{*}), X).$  Moreover, if $T^{\ast} < \infty$, then 
$\limsup_{t\to T^{\ast}} \|u(\cdot, t)\|_{X} = \infty.$
\end{enumerate}
\end{Theorem}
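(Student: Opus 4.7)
The plan is to recast both Cauchy problems via Duhamel's formula and apply a Banach fixed-point argument on $C([0,T];X)$ for small $T$. Writing $A=(I-\Delta)^{1/2}$ for \eqref{nlkgh} (and $A=(-\Delta)^{1/2}$ for \eqref{nlw}, with the standard interpretation of $\sin(tA)/A$ at $\xi=0$) and setting $N(u)=(V\ast|u|^2)u$, the map to be contracted is
\[
\Phi(u)(t)=\cos(tA)\,u_0+\frac{\sin(tA)}{A}\,u_1-\int_0^t\frac{\sin((t-s)A)}{A}\,N(u)(s)\,ds.
\]
I would work on the ball $B_R=\{u\in C([0,T];X):\sup_{0\le t\le T}\|u(t)\|_X\le R\}$ with $R$ of order $\|u_0\|_X+\|u_1\|_X$ and $T$ chosen so small that $\Phi$ maps $B_R$ into itself and is strictly contractive; uniqueness of the local solution and the usual bootstrap argument then yield the blow-up alternative.

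Two estimates drive the argument. First, the \emph{propagator bounds}: $\cos(tA)$ and $\sin(tA)/A$ are bounded on $X$, uniformly for $t$ in any bounded interval. For $X=M^{p,q}$ this follows from the unimodular Fourier multiplier theorem on modulation spaces applied to $e^{\pm it(1+|\xi|^2)^{1/2}}$ and $e^{\pm it|\xi|}$, already used repeatedly in the cited literature; the same theorem with polynomial weights handles $X=M_s^{p,1}$. Second, the \emph{trilinear estimate}
\[
\|N(u)\|_X \lesssim \|u\|_X^3,\qquad \|N(u)-N(v)\|_X \lesssim (\|u\|_X^2+\|v\|_X^2)\|u-v\|_X.
\]
For $X=M_s^{p,1}$ with $1<p<\infty$, I would use that $M_s^{p,1}$ is a Banach algebra under pointwise multiplication, so $\||u|^2\|_{M_s^{p,1}}\lesssim\|u\|_{M_s^{p,1}}^2$, combined with a Hardy--Littlewood--Sobolev-type bound showing that convolution with $V=\lambda|x|^{-\gamma}$ maps $M_s^{p+\epsilon,1}\to M_s^{p,1}$ under the prescribed relation $1/p+\gamma/d-1=1/(p+\epsilon)$ (together with $M_s^{p,1}\hookrightarrow M_s^{p+\epsilon,1}$), and a final algebra step to incorporate the outer factor $u$. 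For $X=M^{p,q}$ with $1\le p\le 2$ and $1\le q<2d/(d+\gamma)$, the upper bound on $q$ is exactly what is needed to place $V\ast|u|^2$ into a bounded multiplier class of $M^{p,q}$ via Hausdorff--Young on the $q$-side plus an HLS estimate for $|\xi|^{\gamma-d}$ on the Fourier side, with a bilinear estimate for $|u|^2$ closing the loop.

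The main obstacle is this trilinear estimate: the tight hypotheses on $(p,q)$ and on $\epsilon$ are dictated by requiring the HLS gain at the singularity of $V$ to match the losses in the bilinear step and the pointwise multiplier step, and locating the correct modulation-space scale on which both steps remain bounded is delicate. Once the two estimates are in hand, standard calculations give $\|\Phi(u)\|_{C([0,T];X)}\le C_0(\|u_0\|_X+\|u_1\|_X)+C_1 T R^3$ and a matching Lipschitz bound, so choosing $R=2C_0(\|u_0\|_X+\|u_1\|_X)$ and $T^\ast$ so small that $C_1 T^\ast R^2<1/2$ produces a unique fixed point in $B_R$. The blow-up alternative is then standard: were $\limsup_{t\to T^\ast}\|u(t)\|_X<\infty$ with $T^\ast<\infty$, the local existence time from data at $t_0\nearrow T^\ast$ would be uniformly bounded below, letting us extend the solution past $T^\ast$ and contradicting maximality. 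The argument for \eqref{nlw} is word-for-word identical with the wave propagator in place of the Klein--Gordon one, since the modulation-space boundedness of $e^{\pm it|\xi|}$ holds on bounded time intervals under the same hypotheses.
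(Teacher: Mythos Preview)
Your approach is correct and is essentially the paper's own: Duhamel form, propagator bounds on modulation spaces (the paper's Propositions~\ref{wp} and~\ref{wpm}), the trilinear Hartree estimate (Corollary~\ref{afi}, Lemma~\ref{cl} for $M_s^{p,1}$; Lemma~\ref{iml} for $M^{p,q}$), and a standard contraction plus blow-up alternative. One slip to correct: Hardy--Littlewood--Sobolev \emph{raises} the Lebesgue exponent, so $T_\gamma:M_s^{p,1}\to M_s^{p+\epsilon,1}$, not the direction you wrote; the paper closes the bound via
\[
\|(V\ast|u|^2)u\|_{M_s^{p,1}}\lesssim\|V\ast|u|^2\|_{M_s^{\infty,1}}\|u\|_{M_s^{p,1}}
\le\|V\ast|u|^2\|_{M_s^{p+\epsilon,1}}\|u\|_{M_s^{p,1}}
\lesssim\||u|^2\|_{M_s^{p,1}}\|u\|_{M_s^{p,1}},
\]
i.e.\ the embedding $M_s^{p+\epsilon,1}\hookrightarrow M_s^{\infty,1}$ is used \emph{after} HLS, not $M_s^{p,1}\hookrightarrow M_s^{p+\epsilon,1}$ before it.
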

Up to now we cannot know equations \eqref{nlkgh}
and \eqref{nlw} are locally well posed in $L^{p}(\mathbb R^d)$ but, by Theorem \ref{LW},  in $M^{p,1}(\mathbb R^d)\subset L^p(\R^d)$ (see Lemma \ref{rl} \eqref{el} below). $M^{p,1}_{s_1}(\R^d)$ ($ p\geq 2,$ some  $s_1\in \mathbb R$)  contains a class of data  which  are out of control of $H^{s}(\R^d).$  Notice that taking $s_{1}=-d/2,$ it follows that  $H^{s}(\R^d)=L^2_s(\R^d) \subsetneq M^{2,1}_{s_1}(\R^d) \subsetneq M^{p,1}_{s_1}(\R^d)$ for any $s>0$ (see Theorem \ref{msk}), Theorem \ref{LW} reveals that  we can get local well-posedness for \eqref{nlkgh} and \eqref{nlw} below energy spaces and in any dimension.   
\begin{Remark} The analogue of Theorem \ref{LW}  holds  for the generalized  equations \eqref{nlkgh} and \eqref{nlw},  that is, Klein-Gordon and wave equations with nonlinearity   $(V\ast |u|^{2k})u \ (k\in \mathbb N)$ when $X= M^{p,1}_s(\R^d).$ 
\end{Remark}
\subsection{Fractional Hartree equation}\label{shte}
We study fractional  Schr\"odinger equation with cubic convolution nonlinearity
\begin{eqnarray}\label{fHTE}
i\partial_t u - (-\Delta)^{\frac{\alpha}{2}}u= (V\ast |u|^2) u, u(x,0)= u_0(x)   
\end{eqnarray}
where $u:\mathbb R_t \times \mathbb R_x^d \to \mathbb C, u_0:\mathbb R^d \to \mathbb C,$  $V$ is defined by \eqref{hk},   and  $\alpha>0.$ The fractional Laplacian is defined as 
\begin{eqnarray*}
\mathcal{F}[(-\Delta)^{\alpha/2}u] (\xi) = |\xi|^{\alpha} \mathcal{F}u (\xi)
\end{eqnarray*}
where $\mathcal{F}$ denotes the Fourier transform.  The equation \eqref{fHTE}  is known as the fractional Hartree  equation. Equation \eqref{fHTE} describes the dynamics of  Bose-Einstein condenstate, in which all particles are  in the same state  $u(t,x).$ There is an extensive  study of \eqref{fHTE} with Cauchy data in Sobolev spaces,  see  e.g.,   \cite{miao2007global, cho2013cauchy, cazenave2003semilinear}  and the references therein.
   
    Recently,  for $0<\gamma < \min \{ \alpha, \frac{d}{2}\},$   Bhimani    
\cite{Bhimani2018global}  proved global well-posedness for \eqref{fHTE} in $M^{p,q}(\mathbb R^{d}) (1\leq p \leq 2, 1\leq q < 2d/ (d+\gamma))$   when $\alpha =2, d\geq 1$,  and with radial Cauchy data when  $d\geq 2, \frac{2d}{2d-1}< \alpha < 2$ (cf. \cite{bhimani2016cauchy, manna2017modulation}).  Manna  \cite{manna2018cauchy}  proved small data global well-posedness for \eqref{fHTE} with the potential  $V\in M^{1, \infty}(\R^d).$  On the other hand, 
  many authors   \cite{baoxiang2006isometric,  wang2007global, benyi2009local, guo2014stability, bhimani2016functions} have studied nonlinear Schr\"odinger equation in modulation spaces.
In this paper, using time integrablity of time decay factors of time decay estimate (see Proposition \ref{uf}), we  obtain global well-posedness and scattering   for  small Cauchy data in modulation spaces.   To state result, we set up notations. We call  pair \textbf{ $(p,r)$ Schr\"odinger admissible}  if there exists another exponent $\beta$ such that 
\begin{eqnarray}\label{sra}
\begin{cases}  \frac{1}{\beta} + \frac{2}{r}=1,\\
\frac{1}{3} \leq \frac{1}{\beta} \leq 1\wedge d \left( \frac{1}{2} - \frac{1}{p} \right),\\
 \frac{1}{4} \leq p < \frac{1}{2}- \frac{1}{3d},
\end{cases}
\end{eqnarray}
and 
\[ (p,r)\neq \left( \frac{2d}{d-2}, \infty \right). \]
 Notice that if  pair $(p,r)$ is Schr\"odinger  admissible, then  $3\leq r \leq  \infty$ and $rd \left( \frac{1}{2} - \frac{1}{p} \right)>1$.
We are now ready to  state  following  theorem.
\begin{Theorem}[Global well-posedness]\label{sdhte} Let $2<p<3,  \frac{1}{p}+\frac{\gamma}{d}-1= \frac{1}{2p'},  \ s\in \R, \alpha =2$ and $(p,r)$  is Schr\"odinger admissible pair.  
Assume that $u_0\in  M^{p',1}_{s}(\R^d) $ and there exists a small $\delta>0$ such that $\|u_0\|_{M^{p',1}_{s}}  \leq \delta.$  Then  \eqref{fHTE} has a unique  global solution 
\[ u \in  C(\R, M^{p,1}_s(\R^d)) \cap L^{r}(\R, M_s^{p,1}(\R^d)).\]
One also has the bound
 $\|u\|_{ L^{r}(\R, M_s^{p,1}(\R^d))} \lesssim \|u_0\|_{M^{p',1}_{s}}.$
\end{Theorem}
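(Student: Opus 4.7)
The plan is to run a contraction mapping argument on the Duhamel formulation
\[
\Phi(u)(t) = U(t) u_0 - i \int_0^t U(t-\tau) \bigl[ (V \ast |u|^2) u \bigr](\tau)\, d\tau, \qquad U(t) = e^{-it(-\Delta)},
\]
inside the closed ball
\[
X_\eta = \bigl\{ u \in C(\mathbb R, M^{p,1}_s) \cap L^r(\mathbb R, M^{p,1}_s) : \|u\|_{L^\infty_t M^{p,1}_s} + \|u\|_{L^r_t M^{p,1}_s} \leq \eta \bigr\}
\]
of radius $\eta \sim \delta$, equipped with the natural metric. This parallels the proof of Theorem \ref{mt}, but is simpler since the Schrödinger propagator carries no derivative loss in the dispersive bound.

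First I would invoke the time-decay estimate (Proposition \ref{uf}), which for $\alpha = 2$ provides
\[
\|U(t) f\|_{M^{p,1}_s} \lesssim (1+|t|)^{-\theta d(1/2 - 1/p)} \|f\|_{M^{p_\theta, 1}_s}, \qquad 1/p_\theta = (1-\theta)/p + \theta/p',
\]
for every $\theta \in [0,1]$. The crucial point, absent in the classical $L^p$ setting, is the absence of a singularity at $t=0$, which makes the weight integrable along $\mathbb R$ once the admissibility conditions in \eqref{sra} are met. Plugging this into a Hardy-Littlewood-Sobolev (or Young) convolution estimate in the time variable, and appealing to Christ-Kiselev to dispose of the retarded kernel, yields the modulation-space Strichartz bounds
\[
\|U(t) u_0\|_{L^\infty_t M^{p,1}_s \cap L^r_t M^{p,1}_s} \lesssim \|u_0\|_{M^{p',1}_s},
\]
\[
\Bigl\| \int_0^t U(t-\tau) F(\tau) \, d\tau \Bigr\|_{L^\infty_t M^{p,1}_s \cap L^r_t M^{p,1}_s} \lesssim \|F\|_{L^{r'}_t M^{p',1}_s}.
\]

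The nonlinearity is controlled via the algebra-type inequalities for modulation spaces in Proposition \ref{gap}, coupled with the Hartree scaling identity $\frac{1}{p} + \frac{\gamma}{d} - 1 = \frac{1}{2p'}$ (the Hardy-Littlewood-Sobolev relation absorbing the kernel $|x|^{-\gamma}$), giving the pointwise-in-time trilinear bound
\[
\bigl\| (V \ast |u|^2) u \bigr\|_{M^{p',1}_s} \lesssim \|u\|_{M^{p,1}_s}^3.
\]
Integrating in $t$ by Hölder, with a suitable split between $L^r_t$ and $L^\infty_t$ factors, upgrades this to $\|(V \ast |u|^2)u\|_{L^{r'}_t M^{p',1}_s} \lesssim \|u\|_{X_\eta}^3$. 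Inserting both bounds into the Duhamel formula gives $\|\Phi(u)\|_{X_\eta} \leq C\delta + C\eta^3$ and, from the analogous trilinear difference estimate, $\|\Phi(u) - \Phi(v)\|_{X_\eta} \leq C\eta^2 \|u - v\|_{X_\eta}$, so choosing $\delta$ (and hence $\eta$) small enough makes $\Phi$ contract on $X_\eta$. Banach's fixed-point theorem then delivers the unique global solution and the claimed $L^r_t M^{p,1}_s$ bound, with continuity in time into $M^{p,1}_s$ flowing from the Duhamel formula in the standard way.

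The main obstacle is the index juggling: verifying that the admissibility relations \eqref{sra}, the Hartree scaling identity, and the Hölder exponent balancing in the trilinear time integration are jointly consistent in the narrow window $2 < p < 3$, $1/4 \le 1/p < 1/2 - 1/(3d)$. The lower bound $1/\beta \ge 1/3$ (equivalently $r \ge 3$) in \eqref{sra} is what creates enough room in the time Hölder step, while the exclusion of $(p,r) = (2d/(d-2), \infty)$ prevents the logarithmic endpoint loss. Once this compatibility is verified, the contraction argument closes without further difficulty.
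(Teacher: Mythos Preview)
Your overall strategy---contraction mapping on the Duhamel formula, using the modulation-space dispersive bound from Proposition~\ref{uf} together with the trilinear estimate $\|(V\ast|u|^2)u\|_{M^{p',1}_s}\lesssim\|u\|_{M^{p,1}_s}^3$---is exactly the paper's. But the exponent you place on the inhomogeneous term is wrong, and the argument does not close as written.

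You claim
\[
\Bigl\|\int_0^t U(t-\tau)F(\tau)\,d\tau\Bigr\|_{L^r_t M^{p,1}_s}\lesssim\|F\|_{L^{r'}_t M^{p',1}_s}
\]
and then $\|(V\ast|u|^2)u\|_{L^{r'}_t M^{p',1}_s}\lesssim\|u\|_{X_\eta}^3$. Neither step is available in general. First, there is no $TT^*$ machinery here: $M^{p,1}_s$ and $M^{p',1}_s$ are not dual (the dual of $M^{p,1}_s$ is $M^{p',\infty}_{-s}$), so the classical Strichartz dual exponent $r'$ has no privileged role, and Christ--Kiselev is irrelevant. The only route to an inhomogeneous bound is Minkowski plus the pointwise dispersive estimate, which reduces to a time convolution with $g(t)=(1+|t|)^{-d(1/2-1/p)}$ and then Young or HLS. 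The admissibility relation $1/\beta+2/r=1$ in \eqref{sra} is arranged precisely so that Young gives $\|g\ast h\|_{L^r}\lesssim\|h\|_{L^{r/3}}$; replacing $r/3$ by $r'$ would require $g\in L^{r/2}$, i.e.\ $rd(1/2-1/p)>2$, which is strictly stronger than what \eqref{sra} guarantees. Second, even granting your Strichartz bound, the H\"older step fails for $r>4$: with $u\in L^r_t\cap L^\infty_t$ you can place $\|u\|_{M^{p,1}_s}^3$ in $L^a_t$ only for $a\ge r/3$, and $r'<r/3$ once $r>4$. Since admissible $r$ can be arbitrarily large (e.g.\ when $d(1/2-1/p)\ge 1$), this is a genuine gap.

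The paper's fix is simply to use $L^{r/3}_t M^{p',1}_s$ on the nonlinearity (Proposition~\ref{snel}): then all three factors of $u$ sit in $L^r_t$ by H\"older, and the time-convolution step matches the $\beta$ built into \eqref{sra}. The contraction is run in $L^r_t M^{p,1}_s$ alone, and continuity in $M^{p,1}_s$ is verified separately afterward; no interpolated $\theta$-family of dispersive bounds and no $L^\infty_t$ component of the norm are needed for the Schr\"odinger case.
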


 In \cite[Theorem 1.1]{Bhimani2018global}  global well-posedness  for \eqref{fHTE}  studied with the range of $\gamma<  \min \{d/2, 2\}.$ Notice that  Theorem  \ref{sdhte} covers range of $\gamma>d/2$ as $\frac{\gamma}{d}= 1+ \frac{p-3}{2p}$ and  $\frac{\gamma}{d}>\frac{1}{2}\Leftrightarrow p>\frac{3}{2}.$

\begin{Corollary}[Scattering]\label{ssdhte} Let $u_0 \in M^{p,1}_s(\R^d)$ and let $u$ is the global solution to \eqref{fHTE}  with initial $u(0)=u_0$ such that
$ \|u\|_{L^{r}_t(\mathbb R, M^{p,1}_s)} \leq M $
for some  constant $M>0$ and $r< \infty.$  Then there exist  solutions $e^{it\Delta}u_{\pm}$ 
to the free Schr\"odinger equation 
$iu_t+\Delta u =0$
such that  
\[\|u(t)- e^{it\Delta}u_{\pm}\|_{M^{p,1}_s} \to 0 \  \text{as} \  t \to \infty. \]
\end{Corollary}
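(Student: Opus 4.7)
The plan is to run the standard Duhamel plus Strichartz scattering argument, transcribed into the modulation-space framework already set up for Theorem~\ref{sdhte}. Writing $N(u):=(V\ast|u|^{2})u$ and noting that with $\alpha=2$ equation~\eqref{fHTE} is just the Schr\"odinger equation, Duhamel's formula gives
\[
u(t)=e^{it\Delta}u_{0}-i\int_{0}^{t}e^{i(t-\tau)\Delta}N(u)(\tau)\,d\tau,
\]
so the natural candidates for scattering states are
\[
u_{\pm}:=u_{0}-i\int_{0}^{\pm\infty}e^{-i\tau\Delta}N(u)(\tau)\,d\tau.
\]

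The first step is to show that these improper integrals converge in $M^{p,1}_{s}(\R^{d})$. For this I would invoke the retarded (dual Strichartz) estimate underlying the proof of Theorem~\ref{sdhte}, which itself rests on the integrable time-decay bound for $e^{it\Delta}$ given in Proposition~\ref{uf}: for all $t_{1}<t_{2}$,
\[
\left\|\int_{t_{1}}^{t_{2}} e^{-i\tau\Delta}F(\tau)\,d\tau\right\|_{M^{p,1}_{s}}\lesssim \|F\|_{L^{\beta'}_{\tau}((t_{1},t_{2}),\,M^{p',1}_{s})},
\]
where $\beta$ is the partner exponent in the admissibility condition~\eqref{sra}. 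Combined with the Hartree product estimate of Proposition~\ref{gap} (in which the coupling $\tfrac{1}{p}+\tfrac{\gamma}{d}-1=\tfrac{1}{2p'}$ is precisely what makes things balance) and H\"older in time, this yields the trilinear bound
\[
\|N(u)\|_{L^{\beta'}_{\tau}(I,\,M^{p',1}_{s})}\lesssim \|u\|_{L^{\infty}_{\tau}(I,\,M^{p,1}_{s})}\,\|u\|_{L^{r}_{\tau}(I,\,M^{p,1}_{s})}^{2},
\]
with the H\"older split $1/\beta'=2/r$ forced by~\eqref{sra}. Since by Theorem~\ref{sdhte} the solution lies in $C(\R, M^{p,1}_{s})\cap L^{r}(\R, M^{p,1}_{s})$ with $\|u\|_{L^{r}_{t}M^{p,1}_{s}}\leq M$, the $L^{\infty}$-norm is uniformly finite while the tail $\|u\|_{L^{r}_{\tau}((t,\pm\infty),\, M^{p,1}_{s})}\to 0$ as $t\to\pm\infty$, so the Cauchy criterion applies and $u_{\pm}\in M^{p,1}_{s}$ is well defined.

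The scattering statement then follows by subtracting the two Duhamel formulas: for $t>0$,
\[
u(t)-e^{it\Delta}u_{+}=i\int_{t}^{\infty}e^{i(t-\tau)\Delta}N(u)(\tau)\,d\tau,
\]
and applying the same retarded estimate (together with the boundedness of $e^{it\Delta}$ on $M^{p,1}_{s}$, which is the $\theta=0$ case of Proposition~\ref{uf}) and the trilinear bound gives
\[
\|u(t)-e^{it\Delta}u_{+}\|_{M^{p,1}_{s}}\lesssim \|u\|_{L^{\infty}((t,\infty),\, M^{p,1}_{s})}\,\|u\|_{L^{r}((t,\infty),\, M^{p,1}_{s})}^{2}\longrightarrow 0,
\]
with the analogous statement for $t\to-\infty$. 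The principal obstacle is not analytic but purely the exponent bookkeeping: verifying that the H\"older split into one $L^{\infty}$ and two $L^{r}$ factors is compatible with $1/\beta+2/r=1$, and that Proposition~\ref{gap} delivers the trilinear spatial estimate in $M^{p,1}_{s}$ under the stated coupling of $p$ and $\gamma$. Both are already implicit in the conventions used for Theorem~\ref{sdhte}, so the scattering claim is a direct byproduct of those nonlinear estimates.
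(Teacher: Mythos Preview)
Your scheme is the same as the paper's: pull back via $v(t)=e^{-it\Delta}u(t)$, use Duhamel, show the tails are Cauchy in $M^{p,1}_s$ via the decay estimate of Proposition~\ref{uf} and the trilinear bound $\|N(u)\|_{M^{p',1}_s}\lesssim\|u\|_{M^{p,1}_s}^3$, then read off the scattering limit. The difference is in the H\"older split, and your choice creates two loose ends that the paper's choice avoids.

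The paper places all three factors of $u$ in $L^r_\tau$ and pairs the decay kernel with an exponent $\tilde\beta$ determined by $\tfrac{1}{\tilde\beta}+\tfrac{3}{r}=1$. Since admissibility gives $\tfrac{1}{\beta}\le d(\tfrac12-\tfrac1p)$ and $\tilde\beta>\beta$, one always has $\tilde\beta\,d(\tfrac12-\tfrac1p)>1$ strictly, so $(1+|\tau|)^{-d(1/2-1/p)}\in L^{\tilde\beta}(\R)$ and the Cauchy estimate reduces directly to $\|u\|_{L^r([s,t],M^{p,1}_s)}^3\to 0$, using only the hypothesis $\|u\|_{L^r_t M^{p,1}_s}\le M$.

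Your split $1/\beta'=2/r$ uses the admissibility $\beta$ itself, and at the endpoint $\tfrac{1}{\beta}=d(\tfrac12-\tfrac1p)$ (Case~III of Proposition~\ref{snel}) one has $\beta\,d(\tfrac12-\tfrac1p)=1$, so the decay factor just fails to lie in $L^\beta(\R)$ and the fixed-time retarded estimate you wrote does not follow from H\"older. (In Proposition~\ref{snel} this endpoint is rescued by Hardy--Littlewood--Sobolev, but that applies to the convolution $\int_0^t S(t-\tau)\cdot\,d\tau$ in $L^r_t$, not to the bare integral $\int_{t_1}^{t_2}e^{-i\tau\Delta}\cdot\,d\tau$ at a single time.) Second, your split spends one factor in $L^\infty_\tau M^{p,1}_s$, but the corollary assumes only the $L^r_t$ bound; $u\in C(\R,M^{p,1}_s)$ does not by itself give uniform boundedness. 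Both issues disappear if you switch to the three-$L^r$ split with $\tilde\beta$, which is what the paper does.
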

\begin{Remark} Taking Proposition  \ref{uf} into account,  the method of proof of Theorem \ref{sdhte} may further be applied to  equation   \eqref{fHTE} with $\alpha> 2$ to obtain the  global well-posedness for the small data in modulation spaces.
\end{Remark}

\begin{Theorem}[Global well-posedness]\label{scft} Let $V\in M^{\infty,1}(\R^d)$ and $\frac{1}{2}< \alpha \leq 2.$
 Assume that $u_{0}\in M^{p,q}(\mathbb R^{d}) (1\leq p, q\leq 2). $   Then
 there exists a  unique global solution of \eqref{fHTE} such that $u\in C(\mathbb R, M^{p,q}(\mathbb R^{d})).$
\end{Theorem}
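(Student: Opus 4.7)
The plan is a standard contraction fixed-point on $C([-T,T], M^{p,q}(\R^d))$, globalized via Gronwall and $L^2$ mass conservation. Writing $U_\alpha(t)=e^{-it(-\Delta)^{\alpha/2}}$, I would seek the solution as a fixed point of the Duhamel map
$$\Phi(u)(t) \;=\; U_\alpha(t)u_0 \;-\; i\int_0^t U_\alpha(t-s)\bigl[(V\ast|u|^2)u\bigr](s)\,ds.$$
The linear input is that $U_\alpha(t)$ is bounded on $M^{p,q}(\R^d)$ with at worst polynomial growth in $t$ for $\tfrac12<\alpha\le 2$; this is the unimodular Fourier multiplier theorem of B\'enyi--Gr\"ochenig--Okoudjou--Rogers already invoked in the paper.

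The nonlinear input I plan to establish is the trilinear estimate
$$\|(V\ast|u|^2)u\|_{M^{p,q}} \;\lesssim\; \|V\|_{M^{\infty,1}}\,\|u\|_{L^2}^{2}\,\|u\|_{M^{p,q}},$$
obtained from three ingredients: (i) the pointwise multiplier property $\|fg\|_{M^{p,q}}\lesssim\|f\|_{M^{\infty,1}}\|g\|_{M^{p,q}}$ of the Sj\"ostrand class $M^{\infty,1}$; (ii) the Minkowski-type bound $\|V\ast g\|_{M^{\infty,1}}\lesssim\|V\|_{M^{\infty,1}}\|g\|_{L^1}$, which follows from the translation invariance of $M^{\infty,1}$; and (iii) $\||u|^2\|_{L^1}=\|u\|_{L^2}^{2}$. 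The embedding $M^{p,q}\hookrightarrow M^{2,2}=L^2$, valid exactly when $p,q\le 2$, guarantees that any $u\in M^{p,q}$ automatically lies in $L^2$; this is where the hypothesis on the parameters is used. Feeding this estimate into $\Phi$ yields a Lipschitz contraction on a ball in $C([-T,T], M^{p,q})$ of radius $\sim\|u_0\|_{M^{p,q}}$ for some $T=T(\|u_0\|_{M^{p,q}},\|V\|_{M^{\infty,1}})>0$, providing local existence and uniqueness.

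To pass to a global solution, I would combine conservation of the $L^2$ mass with the trilinear estimate. The formal identity $\tfrac{d}{dt}\|u\|_{L^2}^{2}=-2\,\mathrm{Im}\!\int(V\ast|u|^2)|u|^2\,dx=0$ (using that $V\ast|u|^2$ is real-valued in the Hartree setting) is upgraded to an honest conservation law by approximating $u_0$ by Schwartz data, applying the identity to the smooth solutions, and passing to the limit through the continuous dependence built into the contraction. Plugging the conserved $L^2$ norm back into the trilinear estimate reduces it to the linear-in-$u$ bound $\|(V\ast|u|^2)u\|_{M^{p,q}}\le C(u_0,V)\,\|u\|_{M^{p,q}}$, so Gronwall applied to
$$\|u(t)\|_{M^{p,q}} \;\le\; \|U_\alpha(t)u_0\|_{M^{p,q}} \;+\; C(u_0,V)\!\int_0^{|t|}\!\|u(s)\|_{M^{p,q}}\,ds$$
gives an at-most-exponential a priori bound that rules out finite-time blow-up and extends the local solution to all $t\in\R$. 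The main obstacle I anticipate is the rigorous justification of mass conservation at merely $M^{p,q}$ regularity, where $u$ need not be $H^1$; this has to be handled by a careful Schwartz-density argument and stability of the contraction. Everything else is essentially bookkeeping on top of the two displayed estimates.
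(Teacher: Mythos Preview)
Your proposal is correct and follows essentially the same approach as the paper: contraction mapping for local existence via the boundedness of $U_\alpha(t)$ on $M^{p,q}$ (Proposition~\ref{uf}) together with a trilinear Hartree estimate, then globalization via $L^2$ conservation and Gronwall. The paper cites the trilinear bound as Lemma~\ref{lcg} and derives the $L^2$-based refinement in the global step via the modulation-space convolution inequality $\|V\ast |u|^2\|_{M^{\infty,1}}\lesssim\|V\|_{M^{\infty,1}}\||u|^2\|_{M^{1,\infty}}\lesssim\|V\|_{M^{\infty,1}}\||u|^2\|_{L^1}$, whereas you obtain the same bound directly from translation invariance of $M^{\infty,1}$ and Minkowski; these are equivalent. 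Your concern about rigorously justifying mass conservation is legitimate, and the paper simply invokes it without further comment.
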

In \cite[Theorem 1.2]{manna2017modulation} it is proved that \eqref{fHTE} with potential $V\in M^{\infty,1}(\R^d)$ and $\alpha=2$ is globally well-posed in $M^{p,q}(\R^d) (1\leq q \leq p \leq 2).$ Notice that Theorem \ref{scft}  generalize this result for \eqref{fHTE} with $\frac{1}{2}< \alpha < 2.$ 

Up to now we cannot know \eqref{fHTE} is locally well-posed in $L^{p}(\R^d)$ but, by Theorem \ref{lwhte}, in $M^{p,1}(\R^d).$   Local well-posedness for \eqref{fHTE}  are studied by many authors in Sobolev spaces.
Modulation spaces  enjoy lower derivative regularity (see Proposition \ref{msk} below) and we can solve \eqref{fHTE}   with the lower regularity assumption for the Cauchy data. Specifically, we have 
\begin{Theorem}[Local well-posedness]\label{lwhte}
 Let  $V$ is given by \eqref{hk},  $\frac{1}{2}< \alpha \leq 2 $ and  $u_0\in M_s^{p,1}(\mathbb R^d) \ (1<p<\infty, s \in \mathbb R,  \frac{1}{p}+\frac{\gamma}{d}-1= \frac{1}{p+\epsilon}, \epsilon>0).$  Then there exists $T^{*}=T^{\ast}(\|u_{0}\|_{M^{p,1}_s})$ such that \eqref{fHTE} has a unique solution $u\in C([0, T^{*}), M^{p,1}_s(\R^d)).$  Moreover, if $T^{\ast} < \infty$, then 
$\limsup_{t\to T^{\ast}} \|u(\cdot, t)\|_{M^{p,1}_s} = \infty.$
\end{Theorem}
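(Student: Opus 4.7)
The plan is a Banach fixed-point argument for the Duhamel formulation of \eqref{fHTE},
\[\Phi(u)(t) = U_\alpha(t) u_0 - i \int_0^t U_\alpha(t-\tau)\bigl[(V \ast |u|^2)\, u\bigr](\tau)\, d\tau, \qquad U_\alpha(t) := e^{-it(-\Delta)^{\alpha/2}},\]
carried out on a closed ball of $C([0,T], M_s^{p,1}(\R^d))$ whose radius scales with $\|u_0\|_{M_s^{p,1}}$ and whose $T$ will be chosen small at the end.

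The dispersive ingredient is the uniform-in-$t$ boundedness of $U_\alpha(t)$ on $M_s^{p,q}(\R^d)$ for $\alpha \in (0,2]$ (a B\'enyi--Gr\"ochenig--Okoudjou--Rogers type result used throughout the paper), giving $\|U_\alpha(t) f\|_{M_s^{p,1}} \lesssim \|f\|_{M_s^{p,1}}$ on any compact time interval. This controls both the linear piece $U_\alpha(t) u_0$ and the propagator inside the Duhamel integral, so no time-decay of $U_\alpha(t)$ is needed for purely local theory in the range $\tfrac12<\alpha\leq 2$.

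The core of the proof is the trilinear estimate
\[\|(V \ast |u|^2)\, u\|_{M_s^{p,1}} \lesssim \|u\|_{M_s^{p,1}}^3,\]
which I would derive in a chain. First, use the algebra (more generally, $M^{\infty,1}$-module) property of $M_s^{p,1}$ under pointwise multiplication to control $\||u|^2\|_{M^{p,1}}$ by $\|u\|_{M_s^{p,1}}^2$. Second, pass through a Lebesgue pivot via Lemma~\ref{rl} and apply Hardy--Littlewood--Sobolev for the Riesz kernel $|x|^{-\gamma}$ to place $V\ast |u|^2$ in $L^{p+\epsilon}(\R^d)$; the hypothesis $\tfrac{1}{p}+\tfrac{\gamma}{d}-1 = \tfrac{1}{p+\epsilon}$ is precisely the HLS exponent balance. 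Third, return to a pointwise multiplier space for $M_s^{p,1}$ (the slack $\epsilon>0$ providing room for an embedding of the $L^{p+\epsilon}$ output into $M^{\infty,1}$) and pair with $u\in M_s^{p,1}$ by the module property.

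With this estimate in hand the contraction closes on $B_R := \{w : \|w\|_{L^\infty_T M_s^{p,1}} \leq R\}$ with $R = 2C\|u_0\|_{M_s^{p,1}}$, since
\[\|\Phi(u)\|_{L^\infty_T M_s^{p,1}} \leq C\|u_0\|_{M_s^{p,1}} + CTR^3, \qquad \|\Phi(u)-\Phi(v)\|_{L^\infty_T M_s^{p,1}} \leq CTR^2\|u-v\|_{L^\infty_T M_s^{p,1}},\]
so any $T^\ast$ with $CT^\ast R^2 < 1/2$ yields a unique fixed point by Banach's theorem, and the life-span bound depends only on $\|u_0\|_{M_s^{p,1}}$. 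The blow-up alternative is routine: if $T^\ast<\infty$ while $\|u(t_n)\|_{M_s^{p,1}}$ remained bounded along some $t_n \uparrow T^\ast$, one could restart the local construction at $t_n$ and extend past $T^\ast$, contradicting maximality. The main obstacle I foresee is the last step of the trilinear estimate: realizing $L^{p+\epsilon}$ inside a pointwise multiplier space of $M_s^{p,1}$ and confirming that the slack $\epsilon>0$ absorbs the weight loss incurred when exchanging $M_s^{p,1}$ for a Lebesgue pivot at general $s\in\R$.
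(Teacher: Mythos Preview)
Your overall scheme---Duhamel formulation plus a contraction on $C([0,T],M_s^{p,1})$, with the propagator controlled by Proposition~\ref{uf}---is exactly what the paper does. The blow-up alternative is also the standard argument you sketch. The only substantive point is the trilinear estimate, and the obstacle you flag in your last paragraph is real: your proposed derivation does not close.

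The step ``embed $L^{p+\epsilon}$ into $M^{\infty,1}$'' fails. From Lemma~\ref{rl}\eqref{el} one has $L^{r}\hookrightarrow M^{r,q}$ only when $q\geq \max\{r,r'\}$, so passing through a Lebesgue pivot destroys the second index $q=1$ and there is no way to recover it afterwards; $M^{\infty,1}$ is a very small space (the Sj\"ostrand class) and contains no $L^r$ with $r<\infty$. More generally, once you leave the $q=1$ scale you lose the algebra/module property needed to multiply back against $u\in M_s^{p,1}$, and the slack $\epsilon>0$ cannot repair this because the obstruction is in the frequency-summability index, not in the Lebesgue index.

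The paper avoids this by never leaving modulation spaces: Proposition~\ref{fip} proves a Hardy--Littlewood--Sobolev inequality directly on $M_s^{p,q}$, namely $T_\gamma:M_s^{p_1,q}\to M_s^{p_2,q}$ with the same $q$ on both sides. The one-line proof is that the STFT of a convolution is again a convolution in the $x$-variable, $V_g(V_\gamma\ast f)(x,w)=V_\gamma\ast(f\ast M_w g^{*})(x)$, so classical HLS applies in $x$ for each fixed $w$ and the $\ell^q_w$-sum (with weight $\langle w\rangle^s$) is untouched. With $q=1$ preserved, Corollary~\ref{afi} then reads
\[
\|(V_\gamma\ast|u|^2)u\|_{M_s^{p,1}}\lesssim \|T_\gamma|u|^2\|_{M_s^{\infty,1}}\|u\|_{M_s^{p,1}}\lesssim \|T_\gamma|u|^2\|_{M_s^{p+\epsilon,1}}\|u\|_{M_s^{p,1}}\lesssim \||u|^2\|_{M_s^{p,1}}\|u\|_{M_s^{p,1}}\lesssim \|u\|_{M_s^{p,1}}^3,
\]
using only the product estimate (Proposition~\ref{gap}), the monotone embedding $M_s^{p+\epsilon,1}\hookrightarrow M_s^{\infty,1}$ (Lemma~\ref{rl}\eqref{ir}), and Proposition~\ref{fip}. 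Replace your Lebesgue-pivot argument with this and the rest of your proof goes through verbatim.
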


\begin{Remark}\label{frd} \
\begin{enumerate}
\item The analogue of Theorem \ref{lwhte}  holds  for the generalized  equation \eqref{fHTE} and \eqref{nlw},  that is, fractional Schr\"odinger equation  with nonlinearity   $(V\ast |u|^{2k})u \ (k\in \mathbb N)$ when $X= M^{p,1}_s(\R^d).$ 

\item \label{frd2} We  have obtain  local well-posedness   for  generalized  equations \eqref{nlkgh},  \eqref{nlw}  and \eqref{fHTE} with potential  $V \in \mathcal{F}L^q(\mathbb R^d) \ (1<q< \infty) \ \text{or}  \   M^{\infty,1}(\mathbb R^d) \ \text{or} \ V\in M^{1,\infty}(\mathbb R^d).$  See Theorems \ref{wht1}  and Remark \ref{fr} below.
\end{enumerate}
 \end{Remark}

The remainder of this paper is organized as follows.  In Section \ref{sp}, we introduce notations and preliminaries which will be used in the sequel.  In  Section \ref{nesthn}, we prove  some  Strichartz type estimates  and   boundedness of   Hartree  nonlinearity  in modulation spaces.
In Section \ref{ps1}, we prove Theorems \ref{mt}, \ref{LW}  and Corollary \ref{mts}. In Section \ref{ps2}, we prove Theorems \ref{sdhte}, \ref{scft} and \ref{lwhte}, and Corollary \ref{ssdhte}.  In Section \ref{lwtp}, we  give   sketch proof  of Remark \ref{frd} \eqref{frd2}.
\section{Preliminaries}\label{sp}  
\subsection{Notations} The notation $A \lesssim B $ means $A \leq cB$ for a some constant $c > 0 $, whereas $ A \asymp B $ means $c^{-1}A\leq B\leq cA $ for some $c\geq 1$ and   $a\wedge b = \text{min} \{ a, b\}.$ The symbol $A_{1}\hookrightarrow A_{2}$ denotes the continuous embedding  of the topological linear space $A_{1}$ into $A_{2}.$ The   $L^{p}(\mathbb R^{d})$ norm is denoted by 
 $$\|f\|_{L^{p}}=\left( \int_{\R^d} |f(x)|^p dx \right)^{1/p} \ \  (1\leq p < \infty),$$
the $L^{\infty}(\mathbb R^{d})$ norm  is $\|f\|_{L^{\infty}}= \text{ess.sup}_{ x\in \mathbb R^{d}}|f(x)| $. For $1\leq p\leq \infty,$ $p'$ denotes the H\"older conjugate of $p,$ that is, $1/p+1/p'=1.$  
We use  $L_t^{r} (I, X)$ to denote the space time norm
\[\|u\|_{L^{r}_t(I, X)}=  \left( \int_{I} \|u\|^r_{X}  dt \right)^{1/r},\]
where $I\subset \R$ is an interval and $X$ is a Banach space.
The Schwartz space is denoted by  $\mathcal{S}(\mathbb R^{d})$ (with it's usual topology), and the space of tempered distributions is  denoted by $\mathcal{S'}(\mathbb R^{d}).$ For $x=(x_1,\cdots, x_d), y=(y_1,\cdots, y_d) \in \mathbb R^d, $ we put $x\cdot y = \sum_{i=1}^{d} x_i y_i.$
Let $\mathcal{F}:\mathcal{S}(\mathbb R^{d})\to \mathcal{S}(\mathbb R^{d})$ be the Fourier transform  defined by  
\begin{eqnarray*}
\mathcal{F}f(w)=\widehat{f}(w)=\int_{\mathbb R^{d}} f(t) e^{- 2\pi i t\cdot w}dt, \  w\in \mathbb R^d.
\end{eqnarray*}
Then $\mathcal{F}$ is a bijection  and the inverse Fourier transform  is given by
\begin{eqnarray*}
\mathcal{F}^{-1}f(x)=f^{\vee}(x)=\int_{\mathbb R^{d}} f(w)\, e^{2\pi i x\cdot w} dw,~~x\in \mathbb R^{d},
\end{eqnarray*}
and this Fourier transform can be uniquely extended to $\mathcal{F}:\mathcal{S}'(\mathbb R^d) \to \mathcal{S}'(\mathbb R^d).$
The \textbf{Fourier-Lebesgue spaces} $\mathcal{F}L^p(\mathbb R^d)$ is defined by 
$$\mathcal{F}L^p(\mathbb R^d)= \left\{f\in \mathcal{S}'(\mathbb R^d): \|f\|_{\mathcal{F}L^{p}}:= \|\hat{f}\|_{L^{p}}< \infty \right\}.$$
The standard  Sobolev  spaces $W^{s,p}(\R^d) \ (1<p< \infty, s\geq 0)$ have a different character according to whether $s$ is integer or not. Namely, for $s$ integer, they consist of  $L^p-$functions with derivatives in  $L^p$ up to order $s$, hence coincide with the \textbf{$L^p_s-$Sobolev spaces }(also known as \textbf{Bessel potential
spaces}), defined for  $s\in \R$ by
$$L^p_s(\mathbb R^d)=\left\{f\in \mathcal{S}'(\mathbb R^d): \|f\|_{L^{p}_s}:=\left\| \mathcal{F}^{-1} [\langle \cdot \rangle^s \mathcal{F}(f)]\right\|_{L^p}< \infty \right\},$$
where  $\langle \xi \rangle^{s} = (1+ |\xi|^2)^{s/2} \ (\xi \in \mathbb R^d).$ Note that $L^p_{s_1}(\R^d) \hookrightarrow L^p_{s_2}(\R^d)$ if $s_2\leq s_1.$  

\subsection{Modulation  spaces} Feichtinger  \cite{feichtinger1983modulation} introduced a  class of Banach spaces,  the so called modulation spaces,  which allow a measurement of space variable and Fourier transform variable of a function or distribution on $\mathbb R^d$ simultaneously using the short-time Fourier transform(STFT). The  STFT  of a function $f$ with respect to a window function $g \in {\mathcal S}(\R^d)$ is defined by
\begin{eqnarray*}\label{stft}
V_{g}f(x,w)= \int_{\mathbb R^{d}} f(t) \overline{g(t-x)} e^{-2\pi i w\cdot t}dt,  \  (x, w) \in \mathbb R^{2d}
\end{eqnarray*}
 whenever the integral exists.
For $x, y \in \R^d$ the translation operator $T_x$ and the modulation operator $M_y$ are
defined by $T_{x}f(t)= f(t-x)$ and $M_{y}f(t)= e^{2\pi i y\cdot t} f(t).$ In terms of these
operators the STFT may be expressed as
\begin{eqnarray}
\label{ipform} V_{g}f(x,y)=\langle f, M_{y}T_{x}g\rangle\nonumber
\end{eqnarray}
 where $\langle f, g\rangle$ denotes the inner product for $L^2$ functions,
or the action of the tempered distribution $f$ on the Schwartz class function $g$.  Thus $V: (f,g) \to V_g(f)$ extends to a bilinear form on $\mathcal{S}'(\R^d) \times \mathcal{S}(\R^d)$ and $V_g(f)$ defines a uniformly continuous function on $\R^{d} \times \R^d$ whenever $f \in \mathcal{S}'(\R^d) $ and $g \in  \mathcal{S}(\R^d)$.
\begin{Definition}[modulation spaces]\label{ms} Let $1 \leq p,q \leq \infty, s \in \R$ and $0\neq g \in{\mathcal S}(\R^d)$. The  weighted  modulation space   $M_s^{p,q}(\R^d)$
is defined to be the space of all tempered distributions $f$ for which the following  norm is finite:
$$ \|f\|_{M_s^{p,q}}=  \left(\int_{\R^d}\left(\int_{\R^d} |V_{g}f(x,y)|^{p} dx\right)^{q/p} (1+|y|^2)^{sq/2} \, dy\right)^{1/q},$$ for $ 1 \leq p,q <\infty$. If $p$ or $q$ is infinite, $\|f\|_{M_s^{p,q}}$ is defined by replacing the corresponding integral by the essential supremum. 
\end{Definition}
For $s=0,$ we write $M^{p,q}_0(\R^d)= M^{p,q}(\R^d).$
\begin{rem}
\label{equidm}
The definition of the modulation space given above, is independent of the choice of 
the particular window function.  See  \cite[Proposition 11.3.2(c)]{grochenig2013foundations}.
\end{rem}
Applying the frequency-uniform localization techniques,
one can get an equivalent definition of modulation spaces  \cite{wang2007global}  as follows. Let  $Q_k$ be the unit cube with the center at  $k$, so $\{ Q_{k}\}_{k \in \mathbb Z^d}$ constitutes a decomposition of  $\mathbb R^d,$ that is, $\mathbb R^d = \cup_{k\in \mathbb Z^{d}} Q_{k}.$
Let   $\rho \in \mathcal{S}(\mathbb R^d),$  $\rho: \mathbb R^d \to [0,1]$  be  a smooth function satisfying   $\rho(\xi)= 1 \  \text{if} \ \ |\xi|_{\infty}\leq \frac{1}{2} $ and $\rho(\xi)=
0 \  \text{if} \ \ |\xi|_{\infty}\geq  1.$ Let  $\rho_k$ be a translation of $\rho,$ that is,
\[ \rho_k(\xi)= \rho(\xi -k) \ (k \in \mathbb Z^d).\]
Denote 
$$\sigma_{k}(\xi)= \frac{\rho_{k}(\xi)}{\sum_{l\in\mathbb Z^{d}}\rho_{l}(\xi)}, \ (k \in \mathbb Z^d).$$   Then   $\{ \sigma_k(\xi)\}_{k\in \mathbb Z^d}$ satisfies the following properties
\begin{eqnarray*}
\begin{cases} |\sigma_{k}(\xi)|\geq c, \forall z \in Q_{k},\\
\text{supp} \ \sigma_{k} \subset \{\xi: |\xi-k|_{\infty}\leq 1 \},\\
\sum_{k\in \mathbb Z^{d}} \sigma_{k}(\xi)\equiv 1, \forall \xi \in \mathbb R^d,\\
|D^{\alpha}\sigma_{k}(\xi)|\leq C_{|\alpha|}, \forall \xi \in \mathbb R^d, \alpha \in (\mathbb N \cup \{0\})^{d}.
\end{cases}
\end{eqnarray*}
The frequency-uniform decomposition operators can be exactly defined by 
$$\square_k = \mathcal{F}^{-1} \sigma_k \mathcal{F}. $$
For $1\leq p, q \leq \infty, s\in \mathbb R,$  it is known \cite{feichtinger1983modulation} that 
\begin{eqnarray*}
\|f\|_{M^{p,q}_s}\asymp  \left( \sum_{k\in \mathbb Z^d} \| \square_k(f)\|^q_{L^p} (1+|k|)^{sq} \right)^{1/q},
\end{eqnarray*}
with natural modifications for $p, q= \infty.$
We notice almost orthogonality  relation for the frequency-uniform decomposition operators
 \begin{eqnarray*}
 \square_k= \sum_{\|\ell \|_{\infty}\leq 1} \square_{k+\ell}\square_{k} \ \ (k, \ell \in \mathbb Z^{d})
\end{eqnarray*}
where $\|\ell\|_{\infty}= \max \{|\ell_i|:\ell_i \in \mathbb Z, i=1,..., d\}.$ 
\begin{Lemma} [\cite{wang2011harmonic, grochenig2013foundations,ruzhansky2012modulation}]  \label{rl} Let $p,q, p_{i}, q_{i}\in [1, \infty]$  $(i=1,2), s, s_1, s_2 \in \R.$ Then
\begin{enumerate}
\item \label{ir} $M^{p_{1}, q_{1}}_{s_1}(\mathbb R^{d}) \hookrightarrow M^{p_{2}, q_{2}}_{s_2}(\mathbb R^{d})$ whenever $p_{1}\leq p_{2}$ and $q_{1}\leq q_{2}$ and $s_2\leq s_1.$
\item \label{el} $M^{p,q_{1}}(\mathbb R^{d}) \hookrightarrow L^{p}(\mathbb R^{d}) \hookrightarrow M^{p,q_{2}}(\mathbb R^{d})$ holds for $q_{1}\leq \text{min} \{p, p'\}$ and $q_{2}\geq \text{max} \{p, p'\}$ with $\frac{1}{p}+\frac{1}{p'}=1.$
\item \label{rcs} $M^{\min\{p', 2\}, p}(\mathbb R^d) \hookrightarrow \mathcal{F} L^{p}(\mathbb R^d)\hookrightarrow M^{\max \{p',2\},p}(\mathbb R^d),  \frac{1}{p}+\frac{1}{p'}=1.$
\item $\mathcal{S}(\mathbb R^{d})$ is dense in  $M^{p,q}(\mathbb R^{d})$ if $p$ and $q<\infty.$
\item  $M^{p,p}(\mathbb R^d) \hookrightarrow L^p(\mathbb R^d) \hookrightarrow M^{p,p'}(\mathbb R^d)$ for $1\leq p \leq 2$ and  $M^{p,p'}(\mathbb R^d)  \hookrightarrow L^p(\mathbb R^d) \hookrightarrow M^{p,p}(\mathbb R^d)$ for $2 \leq p \leq \infty.$
\item \label{fi} The Fourier transform $\mathcal{F}:M_s^{p,p}(\mathbb R^{d})\to M_s^{p,p}(\mathbb R^{d})$ is an isomorphism.
\item The space  $M_s^{p,q}(\mathbb R^{d})$ is a  Banach space.
\item \label{ic}The space $M_s^{p,q}(\mathbb R^{d})$ is invariant under complex conjugation.
\end{enumerate}
\end{Lemma}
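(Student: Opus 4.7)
The plan is to derive all eight items from two unifying tools: (a) the equivalent frequency-uniform decomposition norm $\|f\|_{M^{p,q}_s}\asymp \bigl(\sum_{k\in\mathbb Z^d}\|\square_k f\|_{L^p}^q(1+|k|)^{sq}\bigr)^{1/q}$ stated just before the lemma, and (b) the basic covariance identities for the STFT under Fourier transform, translation, modulation and conjugation. Since the statement is a compendium of known facts from \cite{wang2011harmonic, grochenig2013foundations, ruzhansky2012modulation}, the proposal is organizational: I would indicate which of these two inputs feeds into each clause, and verify sharpness of the exponents where needed.

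First I would dispatch the inclusions in (1) and the first part of (5). Since $\square_k f$ has Fourier support in a fixed-size (unit) cube, Bernstein's inequality yields $\|\square_k f\|_{L^{p_2}}\lesssim \|\square_k f\|_{L^{p_1}}$ whenever $p_1\le p_2$, with a constant independent of $k$. Monotonicity of $\ell^q(\mathbb Z^d)$ then gives the $q_1\le q_2$ inclusion, and the weight comparison $(1+|k|)^{s_2}\le (1+|k|)^{s_1}$ handles the regularity parameters. For (2) and the rest of (5), I factor $f=\sum_k \square_k f$ and combine Minkowski's inequality in $L^p$ with Hausdorff--Young on each unit frequency cube; the sharp thresholds $q_1\le\min\{p,p'\}$, $q_2\ge\max\{p,p'\}$ emerge from interpolating between the $p=2$ Plancherel case and the $p=1,\infty$ endpoints. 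Item (3) is essentially the same computation applied to $\hat f$: Hausdorff--Young inside each $Q_k$ gives $\|\widehat{\square_k f}\|_{L^p}\lesssim \|\square_k f\|_{L^{p'}}$ and vice versa, and summing in $k$ with the appropriate outer $\ell^p$ produces the two-sided comparison with $\mathcal{F}L^p$.

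Item (4) I would obtain by the truncation $f_N=\sum_{|k|\le N}\square_k f$, which lies in $\mathcal{S}(\mathbb R^d)$ after convolving with a suitable Schwartz bump and multiplying by a spatial cutoff; the difference $\|f-f_N\|_{M^{p,q}}$ vanishes by dominated convergence on the series when $p,q<\infty$. Item (6) follows from the STFT covariance $V_gf(x,y)=e^{-2\pi i x\cdot y}V_{\hat g}\hat f(y,-x)$, so for $p=q$ the norm $\|V_gf\|_{L^{p,p}_s}$ of $\mathbb R^{2d}$ is symmetric under swapping $x\leftrightarrow y$; one checks that the weight $(1+|y|^2)^{s/2}$ also survives this swap (after using $M_s^{p,p}$ with the dual weight on the $\hat g$ side and Remark \ref{equidm} to change window). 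Item (7) is completeness: $V_g$ isometrically embeds $M^{p,q}_s$ into the weighted mixed-norm Lebesgue space $L^{p,q}_s(\mathbb R^{2d})$, which is complete, and the image is closed by the inversion formula for the STFT. Item (8) is immediate from $V_g(\bar f)(x,y)=\overline{V_{\bar g}f(x,-y)}$ together with window-independence.

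The main obstacle I anticipate is pinning down the sharp exponent windows in (2), (3) and (5): naively using only Bernstein inside each $Q_k$ gives the weaker thresholds $q_1\le 1$, $q_2\ge \infty$, and recovering the sharp $\min\{p,p'\}$/$\max\{p,p'\}$ endpoints requires the Hausdorff--Young step together with the $\ell^p$-vs-$\ell^{p'}$ duality trade-off in the outer sum. Everything else reduces to bookkeeping on the frequency-uniform side, which is why I would structure the write-up around that equivalent norm and relegate STFT identities to the two clauses (6) and (8) where they are genuinely essential.
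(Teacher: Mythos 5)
The paper does not actually prove this lemma: it is a compendium of known facts quoted with citations to \cite{wang2011harmonic, grochenig2013foundations, ruzhansky2012modulation}, so there is no in-paper argument to compare against. Your overall strategy --- Bernstein's inequality on each $\square_k f$ for (1), triangle inequality plus Hausdorff--Young and duality/interpolation for (2), (3), (5), truncation for (4), STFT covariance identities for (6) and (8), and the reproducing-formula argument for (7) --- is the standard textbook route, and for most items the sketch is sound (with the caveat, which you yourself flag, that the sharp thresholds $\min\{p,p'\}$ and $\max\{p,p'\}$ in (2) and (5) genuinely require the interpolation between the $(1,1)$, $(2,2)$ and $(\infty,1)$ endpoints and cannot come from Bernstein alone).

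There is, however, a genuine gap in your treatment of item (6). The covariance identity $V_gf(x,y)=e^{-2\pi i x\cdot y}V_{\hat g}\hat f(y,-x)$ does swap the two phase-space variables, but with the paper's Definition \ref{ms} the weight $(1+|y|^2)^{s/2}$ sits only on the frequency variable, and it does \emph{not} ``survive the swap'': after the swap it becomes a weight on the spatial variable of $V_gf$, which defines a different space. Concretely, take $f(x)=e^{2\pi i\xi_0\cdot x}\varphi(x)$ with $\varphi$ a fixed Schwartz bump; then $\|f\|_{M^{p,p}_s}\asymp(1+|\xi_0|)^{s}$, while $\hat f$ is a spatial translate of $\hat\varphi$ and modulation norms are translation invariant, so $\|\hat f\|_{M^{p,p}_s}\asymp 1$. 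Hence $\mathcal F$ is not bounded on $M^{p,p}_s$ for $s>0$ (nor is $\mathcal F^{-1}$ for $s<0$). The Fourier-invariance statement holds for $s=0$, or more generally for phase-space weights $m(x,y)$ invariant under $(x,y)\mapsto(y,-x)$ such as $(1+|x|^2+|y|^2)^{s/2}$, which is the form in which it appears in the cited references; your phrase ``one checks that the weight also survives this swap'' is exactly the step that cannot be checked, and no choice of dual weight on the window side repairs it, since the window dependence is absorbed by Remark \ref{equidm} and cannot alter the weight class of the space.
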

\begin{Theorem}[\cite{kobayashi2011inclusion, ruzhansky2012modulation}]\label{msk}
Let $1\leq p, q \leq \infty,$ $s_1, s_2 \in \R,$ and 
$$\tau (p,q)= \max \left\{ 0, d\left( \frac{1}{q}- \frac{1}{p}\right), d\left( \frac{1}{q}+ \frac{1}{p}-1\right) \right\}.$$Then 
$L^{p}_{s_1}(\R^d) \subset M^{p,q}_{s_2}(\R^d)$ if and only if one of the following conditions is satisfied:
\begin{enumerate}
\item[(i)] $q\geq p>1, s_1\geq s_2 + \tau(p,q);$
\item[(ii)] $p>q, s_1>s_2+ \tau(p,q);$
\item[(iii)]  $p=1, q=\infty, s_1\geq s_2 + \tau(1, \infty);$
\item[(iv)] $p=1, q\neq \infty, s_1>s_2+\tau (1, q).$
\end{enumerate} 
\end{Theorem}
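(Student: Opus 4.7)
The plan is to decouple the weight exponents from the Lebesgue--modulation indices by applying the Bessel isomorphism $J^{s_1}=\mathcal F^{-1}\langle\cdot\rangle^{s_1}\mathcal F$, which maps $L^p_{s_1}$ isometrically to $L^p$ and $M^{p,q}_{s_2}$ to $M^{p,q}_{s_2-s_1}$. This reduces the statement to the unweighted form: $L^p\hookrightarrow M^{p,q}_{-\sigma}$ iff $\sigma\geq \tau(p,q)$, with strict inequality in cases (ii) and (iv). Throughout, I would work with the equivalent norm
$$\|f\|_{M^{p,q}_s}\asymp\Bigl(\sum_{k\in\mathbb Z^d}\|\square_k f\|_{L^p}^q(1+|k|)^{sq}\Bigr)^{1/q},$$
so the game becomes comparing the $\ell^q$-norm of $\{\|\square_k f\|_{L^p}\}_k$ with $\|f\|_{L^p}$.

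For sufficiency I would split the square $(1/p,1/q)\in[0,1]^2$ into the three regimes dictated by which term inside the $\max$ defining $\tau(p,q)$ is active. In the region $\tau(p,q)=0$ (namely $q\geq p$ and $1/p+1/q\leq 1$), uniform $L^p$-boundedness of the multipliers $\sigma_k$ for $1<p<\infty$, combined with $\ell^p\hookrightarrow\ell^q$ and the almost-orthogonality relation among the $\square_k$, yields the embedding directly. In the region $q<p$ the loss $d(1/q-1/p)$ comes from a discrete Bernstein-type inequality on the unit-cube frequency-localized pieces $\square_k f$, and the region $1/p+1/q>1$ is handled dually, transferring the estimate via the duality $(M^{p,q}_s)^\ast=M^{p',q'}_{-s}$ (for finite indices) and producing the loss $d(1/p+1/q-1)$. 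The endpoint row $p=1$ and column $q=\infty$ must be treated by hand from the STFT definition, since the $L^p$-multiplier theory for $\sigma_k$ degenerates; this analysis is what produces the non-strict versus strict distinction between (i),(iii) and (ii),(iv), according to whether the borderline sum $\sum_k(1+|k|)^{-\alpha q}$ on $\mathbb Z^d$ converges or diverges logarithmically at the critical exponent.

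For necessity I would construct explicit extremizers. Fix a bump $\varphi\in\mathcal S(\mathbb R^d)$ with $\hat\varphi$ supported in the unit ball, and test against chirped lattice sums
$$f_N(x)=\sum_{|k|_\infty\leq N} a_k\,e^{2\pi i k\cdot x}\varphi(x),$$
whose modulation norm is essentially $\|\{a_k(1+|k|)^{s_2}\}\|_{\ell^q}$, while its $L^p_{s_1}$-norm can be estimated by a weighted sequence norm via Hausdorff--Young (for $p\leq 2$) and its dual (for $p\geq 2$). Optimizing the weights $a_k$ against these two estimates produces a diverging ratio whenever the inequality $s_1\geq s_2+\tau(p,q)$ (with the strictness appropriate to the case) is violated, ruling out the embedding. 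The main obstacle will be pinpointing the third term $d(1/p+1/q-1)$: chirped sums alone do not realize this loss, and one must use a Knapp-type superposition of spatially separated, frequency-concentrated atoms whose constructive interference amplifies the $L^p$ norm while leaving the modulation norm controlled. The fine endpoint distinction at $p=1$ demands a separate direct computation of $V_g f$ on lattice sums rather than a multiplier argument, precisely because the $L^1$-boundedness of $\sigma_k$ that drove the sufficiency proof fails at this endpoint.
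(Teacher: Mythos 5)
This statement is quoted in the paper from \cite{kobayashi2011inclusion, ruzhansky2012modulation} without proof, so there is no internal argument to compare against; what follows is an assessment of your outline on its own terms. The skeleton is right: reducing to $s_1=0$ via the Bessel isomorphism $J^{s_1}$ is legitimate (it is exactly Proposition \ref{iso} together with the analogous fact for $L^p_s$), the uniform-decomposition norm is the right coordinate system, and the three-regime split according to which term of $\tau(p,q)$ is active is how the cited proof is organized. But two steps as written would fail. First, in the regime $\tau(p,q)=0$ you claim the embedding follows from uniform $L^p$-boundedness of the $\sigma_k$ plus $\ell^p\hookrightarrow\ell^q$. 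Uniform boundedness only gives $\sup_k\|\square_k f\|_{L^p}\lesssim\|f\|_{L^p}$, i.e.\ $L^p\hookrightarrow M^{p,\infty}$; the sequence $\{\|\square_k f\|_{L^p}\}_k$ is \emph{not} in $\ell^p$ with norm controlled by $\|f\|_{L^p}$ (for $p=1$ that would assert $L^1\subset M^{1,1}$, which is false). The sharp unweighted statement is $L^p\hookrightarrow M^{p,q}$ for $q\geq\max\{p,p'\}$ (Lemma \ref{rl}\eqref{el} of this paper), and it is obtained by complex interpolation between $L^2=M^{2,2}$ and $L^1\hookrightarrow M^{1,\infty}$, then duality --- not by an $\ell^p\hookrightarrow\ell^q$ inclusion.

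Second, your explanation of the strict-versus-nonstrict dichotomy --- convergence or divergence of $\sum_k(1+|k|)^{-\alpha q}$ at the critical exponent --- only accounts for the strict cases (ii) and (iv), where one pays $\tau(p,q)+\varepsilon$ of weight through a H\"older step in $k$. In case (i) with $1<p\leq q<p'$ one has $\tau(p,q)=d(1/p+1/q-1)>0$ and the embedding holds \emph{at equality}, where that lattice sum diverges; this endpoint cannot be reached by the summation mechanism you describe and is the genuinely hard part of sufficiency (in the literature it comes from interpolating the sharp unweighted embeddings with change of weight, or equivalently from the dilation property of modulation spaces). Relatedly, you concede that your chirped lattice sums do not detect the $d(1/p+1/q-1)$ loss in the necessity direction; the standard device that does is the Sugimoto--Tomita dilation estimate for $\|f(\lambda\cdot)\|_{M^{p,q}}$ applied to a fixed Gaussian as $\lambda\to0$ and $\lambda\to\infty$, which your proposal does not invoke. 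Until the endpoint case (i) and the third-term necessity are supplied by such an argument, the proof is incomplete precisely at the points that make the theorem sharp.
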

\begin{Proposition}[Algebra property \cite{benyi2009local}]\label{gap}  Let $m\in \mathbb N, s\geq 0.$  Assume that $\sum_{i=1}^{m} \frac{1}{p_i}= \frac{1}{p_0}, \sum_{i=1}^{m} \frac{1}{q_i}= m-1+ \frac{1}{q_0}$ with $0<p_i\leq \infty, 1\leq q_i \leq \infty$  for $1\leq i \leq m.$ Then  we have 
\[  \left \| \prod_{i=1}^{m} u_i \right\|_{M^{p_0, q_0}_s} \lesssim  \prod_{i=1}^{m} \|u_i\|_{M^{p_i, q_i}_s}.\]
\end{Proposition}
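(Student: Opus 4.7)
The plan is to work with the frequency-uniform decomposition characterization of modulation spaces, namely
\[
\|f\|_{M^{p,q}_s} \asymp \left( \sum_{k \in \mathbb Z^d} (1+|k|)^{sq}\,\|\square_k f\|_{L^p}^q \right)^{1/q},
\]
which reduces the inequality to a multilinear convolution estimate on weighted sequence spaces. First I would write $u_i = \sum_{k_i \in \mathbb Z^d} \square_{k_i} u_i$ and expand
\[
\square_k\!\left( \prod_{i=1}^m u_i \right) = \sum_{k_1,\ldots,k_m \in \mathbb Z^d} \square_k\!\left( \prod_{i=1}^m \square_{k_i} u_i \right).
\]
Because $\mathrm{supp}\,\sigma_{k_i} \subset k_i + [-1,1]^d$, the Fourier support of $\prod_i \square_{k_i} u_i$ lies in $k_1+\cdots+k_m + [-m,m]^d$, so the inner $\square_k$ vanishes unless $|k - (k_1+\cdots+k_m)|_\infty \leq m+1$. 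Hence the sum over $(k_1,\ldots,k_m)$ for fixed $k$ becomes a constrained sum of bounded multiplicity in the last variable.

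Next I would apply H\"older's inequality in $L^{p_0}$, valid for $0<p_i\le\infty$ with $\sum 1/p_i = 1/p_0$, to obtain
\[
\left\| \square_k\!\left( \prod_{i=1}^m u_i \right) \right\|_{L^{p_0}} \lesssim \sum_{\substack{k_1,\ldots,k_m \\ |k-\sum k_i|_\infty \leq m+1}} \prod_{i=1}^m \|\square_{k_i} u_i\|_{L^{p_i}}.
\]
To handle the weight, I would use that $s \geq 0$ together with the elementary bound $1 + |k_1+\cdots+k_m| \leq \prod_i (1+|k_i|)$, giving $(1+|k|)^s \leq \prod_i (1+|k_i|)^s$. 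Setting $a_i(k_i) := (1+|k_i|)^s \|\square_{k_i} u_i\|_{L^{p_i}}$, the previous display yields
\[
(1+|k|)^s \left\| \square_k\!\left( \prod_{i=1}^m u_i \right) \right\|_{L^{p_0}} \lesssim (a_1 \ast \cdots \ast a_m)(k'),
\]
up to a translation by $O(1)$, where $\ast$ denotes discrete convolution on $\mathbb Z^d$.

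Finally, I would take $\ell^{q_0}$ norms in $k$ and invoke the $m$-linear Young inequality for discrete convolution: under the exponent relation $\sum_{i=1}^m 1/q_i = m-1 + 1/q_0$ with $1\leq q_i\leq\infty$,
\[
\|a_1 \ast \cdots \ast a_m\|_{\ell^{q_0}} \leq \prod_{i=1}^m \|a_i\|_{\ell^{q_i}},
\]
which follows from the bilinear case by iteration. Since $\|a_i\|_{\ell^{q_i}} \asymp \|u_i\|_{M^{p_i,q_i}_s}$, this yields the desired estimate. The main subtlety is the combination of two issues: keeping the H\"older step correct when some $p_i < 1$ (so one is working in quasi-Banach $L^{p_i}$) and verifying that the bounded-multiplicity restriction $|k - \sum k_i|_\infty \leq m+1$ really does reduce the constrained sum to a genuine convolution (absorbing the multiplicity into the implicit constant); once those are secured, the weight distribution for $s\ge 0$ and multilinear Young close the argument routinely.
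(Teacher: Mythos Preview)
The paper does not supply a proof of this proposition; it is quoted from \cite{benyi2009local} as a known tool in the preliminaries section, so there is nothing to compare your argument against on the paper's side.

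Your outline is the standard proof of this multilinear product estimate via the frequency-uniform decomposition, and for the Banach range $1\le p_i\le\infty$ it is essentially complete: the Fourier-support localization, H\"older in $L^{p_0}$, the weight splitting $(1+|k|)^s\lesssim\prod_i(1+|k_i|)^s$ for $s\ge0$, and multilinear Young on $\ell^{q_0}$ are exactly the ingredients used in the literature. One point deserves a sharper treatment than you gave it. When $p_0<1$ the quasi-norm $\|\cdot\|_{L^{p_0}}$ is only $p_0$-subadditive, so you cannot simply pull the \emph{infinite} sum over $(k_1,\dots,k_m)$ outside $\|\cdot\|_{L^{p_0}}$ with a uniform constant; moreover you also need that $\square_k$ is bounded on $L^{p_0}$ for $p_0<1$, which is not a Young-inequality fact but follows from a Nikol'skii/Triebel-type multiplier estimate for band-limited functions (cf.\ Proposition~\ref{lpm} in the paper, or the standard results in \cite{wang2011harmonic}). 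The usual fix is to first bound $\|\square_k(\prod_i u_i)\|_{L^{p_0}}^{p_0}$ by a sum of $p_0$-th powers, then use H\"older in the $\ell$-sum with suitably adjusted exponents before passing to $\ell^{q_0}$; this is routine but is the step your sketch glosses over. Once that is handled, your argument matches the one underlying the cited reference.
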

\begin{Proposition}[isomorphism  \cite{feichtinger1983modulation}] \label{iso}Let $0<p,q \leq \infty, s, \sigma \in \mathbb R.$ Then  $J_{\sigma}: (I-\Delta)^{\sigma/2}:M^{p,q}_s(\R^d) \to M^{p,q}_{s-\sigma}(\R^d)$  is an isomorphic mapping. (We denote $J_1=J.$)
\end{Proposition}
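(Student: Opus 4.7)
The plan is to use the frequency-uniform decomposition characterisation of $M^{p,q}_s$ recalled in the preliminaries. Since $J_\sigma$ is the Fourier multiplier with symbol $\langle\xi\rangle^\sigma$ and $\square_k = \mathcal{F}^{-1}\sigma_k\mathcal{F}$, we immediately have
\[
\square_k J_\sigma f = \mathcal{F}^{-1}\!\bigl(\sigma_k(\xi)\langle\xi\rangle^\sigma \hat f(\xi)\bigr).
\]
Using the near orthogonality $\sigma_k = \sigma_k \tilde\sigma_k$ with $\tilde\sigma_k := \sum_{\|\ell\|_\infty\leq 1}\sigma_{k+\ell}$, I would rewrite this as $\square_k J_\sigma f = T_k(\tilde\square_k f)$ where $T_k g := \mathcal{F}^{-1}(m_k\hat g)$ and $m_k(\xi)=\sigma_k(\xi)\langle\xi\rangle^\sigma$. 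This localises the problem onto unit cubes, where $\langle\xi\rangle^\sigma$ behaves like the constant $\langle k\rangle^\sigma$.

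The key step is the uniform multiplier bound $\|T_k g\|_{L^p}\lesssim \langle k\rangle^\sigma\|g\|_{L^p}$. Setting $n_k(\xi)=\langle k\rangle^{-\sigma}m_k(\xi)=\sigma_k(\xi)\bigl(\langle\xi\rangle/\langle k\rangle\bigr)^\sigma$, on $\mathrm{supp}\,\sigma_k$ we have $|\xi-k|_\infty\leq 1$ so $\langle\xi\rangle\asymp\langle k\rangle$ with all derivatives of $\langle\xi\rangle/\langle k\rangle$ bounded uniformly in $k$. Hence the translated symbols $n_k(\cdot+k)$ are a bounded family in $\mathcal{S}(\R^d)$, so their inverse Fourier transforms are uniformly bounded in $L^1$, and $\|T_k g\|_{L^p}\lesssim \langle k\rangle^\sigma \|\mathcal{F}^{-1}n_k(\cdot+k)\|_{L^1}\|g\|_{L^p}\lesssim\langle k\rangle^\sigma\|g\|_{L^p}$ by Young's inequality (this is the Bernstein-type multiplier lemma standard in modulation-space theory; for $p<1$ one replaces $L^1$ by $L^p$ using the compact Fourier support and applies the quasi-Banach variant).

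Combining the two preceding steps and using $(1+|k|)^{s-\sigma}\langle k\rangle^\sigma\asymp(1+|k|)^s$ and $(1+|k+\ell|)\asymp(1+|k|)$ for $\|\ell\|_\infty\leq 1$, I obtain
\[
\|J_\sigma f\|_{M^{p,q}_{s-\sigma}}^q \asymp \sum_{k\in\Z^d}\|\square_k J_\sigma f\|_{L^p}^q(1+|k|)^{(s-\sigma)q}\lesssim \sum_{k\in\Z^d}\|\tilde\square_k f\|_{L^p}^q(1+|k|)^{sq}\asymp \|f\|_{M^{p,q}_s}^q,
\]
so $J_\sigma$ is bounded. Applying the identical argument to $J_{-\sigma}$ gives a two-sided inverse on the symbol level, and since $J_\sigma J_{-\sigma}=\mathrm{id}$ on tempered distributions, $J_\sigma$ is the desired isomorphism. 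The only delicate point is the uniform multiplier estimate; once one sees that the symbols $n_k$ differ from a fixed Schwartz bump merely by translation (and a harmless smooth ratio of Japanese brackets), the remainder is a routine summation over the frequency tiles.
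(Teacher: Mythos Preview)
The paper does not give its own proof of this proposition: it is quoted from Feichtinger \cite{feichtinger1983modulation} without argument. Your proposal is a correct and standard proof via the frequency-uniform decomposition, exactly in the spirit of how this fact is usually established in the modulation-space literature (see also Wang et al.\ \cite{wang2011harmonic}). The reduction to uniformly bounded multipliers $n_k$ on unit cubes, and the observation that $\langle\xi\rangle/\langle k\rangle$ and all its derivatives are $O(1)$ on $\mathrm{supp}\,\sigma_k$, are precisely the right ingredients.

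One small remark: your treatment of the quasi-Banach range $0<p<1$ is a little compressed. The Young-inequality step $\|\mathcal{F}^{-1}n_k\ast g\|_{L^p}\le\|\mathcal{F}^{-1}n_k\|_{L^1}\|g\|_{L^p}$ fails for $p<1$, so one really does need the band-limited substitute you allude to---namely that if $g$ has Fourier support in a fixed cube and $\mathcal{F}^{-1}n_k\in\mathcal{S}$, then $\|\mathcal{F}^{-1}n_k\ast g\|_{L^p}\lesssim\|g\|_{L^p}$ with constant depending only on a finite number of Schwartz seminorms of $\mathcal{F}^{-1}n_k(\cdot+k)$ (this is the $p<1$ multiplier lemma of Triebel/Peetre type, or equivalently Proposition \ref{lpm} in this paper combined with Nikol'skii's inequality). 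Since you already ensure $\tilde\square_k f$ is band-limited and the translated kernels form a bounded set in $\mathcal{S}$, the argument goes through; it just deserves an explicit sentence rather than a parenthetical.
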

\begin{Lemma}\label{dl} Let $s\in \mathbb R, 1\leq p,q < \infty,$ and $\Omega$ be a compact subset of $\mathbb R^d.$  Then $\mathcal{S}^{\Omega}= \{f: f\in \mathcal{S}(\mathbb R^d) \ \text{and supp} \ \hat{f} \subset \Omega \}$ is dense in $M_s^{p,q}(\mathbb R^d).$
\end{Lemma}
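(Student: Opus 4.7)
The plan is to approximate every $f\in M^{p,q}_s(\R^d)$, in the $M^{p,q}_s$-norm, by Schwartz functions whose Fourier transform is compactly supported. The compact set supporting the Fourier transform will in general grow along the approximating sequence, so the claim is really that $\bigcup_{\Omega}\mathcal{S}^{\Omega}$ (the union over all compact $\Omega$) is dense in $M^{p,q}_s(\R^d)$.

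First I reduce to approximating a Schwartz function. By Lemma \ref{rl}(4), $\mathcal{S}(\R^d)$ is dense in $M^{p,q}(\R^d)$ for $p,q<\infty$. Proposition \ref{iso} gives an isomorphism $J_s=(I-\Delta)^{s/2}:M^{p,q}_s(\R^d)\to M^{p,q}(\R^d)$; since $J_s$ restricts to a bijection of $\mathcal{S}(\R^d)$ and also preserves the support of the Fourier transform, density of $\mathcal{S}(\R^d)$ in the weighted space $M^{p,q}_s$ follows. So it suffices to approximate an arbitrary $g\in\mathcal{S}(\R^d)$ by elements of some $\mathcal{S}^{\Omega}$.

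Next I apply a smooth Fourier cutoff. Fix $\varphi\in C_c^\infty(\R^d)$ with $\varphi\equiv 1$ on $\{|\xi|_\infty\leq 1\}$ and $\operatorname{supp}\varphi\subset\{|\xi|_\infty\leq 2\}$, set $\varphi_N(\xi)=\varphi(\xi/N)$, and define $g_N=\mathcal{F}^{-1}(\varphi_N\hat g)$. Because $\varphi_N\hat g\in C_c^\infty(\R^d)\subset\mathcal{S}(\R^d)$, the function $g_N$ itself lies in $\mathcal{S}(\R^d)$ with $\operatorname{supp}\widehat{g_N}\subset\overline{B(0,2N)}$. Using the discrete norm equivalence
$$\|h\|_{M^{p,q}_s}^q\asymp\sum_{k\in\Z^d}\|\square_k h\|_{L^p}^q(1+|k|)^{sq},$$
I observe that $\varphi_N\equiv 1$ on $\operatorname{supp}\sigma_k$ whenever $|k|_\infty+1\leq N$, so $\square_k(g-g_N)=0$ for such $k$. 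For the remaining frequencies, the almost-orthogonality $\sigma_k=\sigma_k\sum_{|\ell|_\infty\leq 1}\sigma_{k+\ell}$ lets me express $\square_k g_N=\mathcal{F}^{-1}(\sigma_k\varphi_N)\ast\sum_{|\ell|_\infty\leq 1}\square_{k+\ell}g$, and Young's inequality yields $\|\square_k g_N\|_{L^p}\lesssim\|\mathcal{F}^{-1}(\sigma_k\varphi_N)\|_{L^1}\sum_{|\ell|_\infty\leq 1}\|\square_{k+\ell}g\|_{L^p}$. Combining the two cases leaves me with a tail
$$\|g-g_N\|_{M^{p,q}_s}^q\lesssim\sum_{|k|_\infty\geq N-1}\|\square_k g\|_{L^p}^q(1+|k|)^{sq}$$
of the finite series $\|g\|_{M^{p,q}_s}^q$, which tends to $0$ as $N\to\infty$.

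The main technical point I expect is the uniform bound $\|\mathcal{F}^{-1}(\sigma_k\varphi_N)\|_{L^1}\leq C$ independent of $k\in\Z^d$ and $N\geq 1$. I would establish it by a translation trick: write $\sigma_k(\xi)\varphi_N(\xi)=\psi_{k,N}(\xi-k)$ with $\psi_{k,N}$ smooth and supported in a fixed unit cube, so that $\mathcal{F}^{-1}(\sigma_k\varphi_N)(x)=e^{2\pi i k\cdot x}\mathcal{F}^{-1}\psi_{k,N}(x)$ and the $L^1$ norm reduces to $\|\mathcal{F}^{-1}\psi_{k,N}\|_{L^1}$. Because the rescaled factor $\varphi((k+\cdot)/N)$ has all derivatives bounded uniformly in $k$ and $N\geq 1$, a standard integration-by-parts estimate shows that $|\mathcal{F}^{-1}\psi_{k,N}(x)|\lesssim(1+|x|)^{-(d+1)}$ with a constant independent of $k$ and $N$, giving the desired uniform $L^1$ bound and completing the proof.
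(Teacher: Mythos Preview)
The paper does not actually prove Lemma~\ref{dl}; it is stated in the preliminaries as a known fact and simply invoked later. So there is no paper proof to compare against, and the relevant question is whether your argument stands on its own.

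It does. Your reading of the statement is the right one: as literally written (with a \emph{fixed} compact $\Omega$) the claim would be false, since closures in $M^{p,q}_s$ cannot enlarge Fourier support beyond $\overline{\Omega}$; what is meant---and what the paper actually uses when it picks $v\in\mathcal{S}^{\Omega}$ with $\Omega$ depending on $v$---is that $\bigcup_{\Omega\text{ compact}}\mathcal{S}^{\Omega}$ is dense. Your two-step argument (reduce to $g\in\mathcal{S}$ via Lemma~\ref{rl}(4) and the isomorphism $J_s$ of Proposition~\ref{iso}, then cut off in frequency by $\varphi_N$) is the standard one and is carried out correctly. The key uniform bound $\sup_{k,N}\|\mathcal{F}^{-1}(\sigma_k\varphi_N)\|_{L^1}<\infty$ follows exactly as you indicate: after translating by $k$ the multiplier is supported in a fixed cube with all derivatives bounded uniformly in $(k,N)$ (the factor $\varphi((k+\cdot)/N)$ contributes powers of $N^{-1}\leq 1$), so integration by parts gives the required decay of the inverse transform. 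The tail estimate then follows, using $(1+|k|)^s\asymp(1+|k+\ell|)^s$ for $|\ell|_\infty\leq 1$ to absorb the finite shifts.

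One cosmetic remark: once you invoke $J_s$ to reduce to the unweighted case, you may as well set $s=0$ in the second step and drop the weight $(1+|k|)^{sq}$; carrying it along is harmless but redundant.
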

For $f\in \mathcal{S}(\mathbb R^{d}),$ we define the fractional Schr\"odinger propagator $e^{it(-\Delta)^{\alpha/2}}$ for $t, \alpha \in \mathbb R$ as follows:
\begin{eqnarray*}
\label{sg}
U(t)f(x)=e^{it (-\Delta)^{\alpha/2}}f(x)= \int_{\mathbb R^d}  e^{i \pi t|\xi|^{\alpha}}\, \hat{f}(\xi) \, e^{2\pi i \xi \cdot x} \, d\xi.
\end{eqnarray*}
When $\alpha=2,$ we write $U(t)=S(t)= e^{-it\Delta}$ (corresponding to usual Schr\"odinger equation).
The next proposition shows that the uniform boundedness and truncated decay estimates  of the Schr\"odinger propagator $e^{it(-\Delta)^{\alpha/2}}$ on modulation spaces.  
\begin{Proposition}[\cite{chen2012asymptotic, wang2007global}]  \
 \label{uf}
 \begin{enumerate}
 \item Let $\frac{1}{2} < \alpha \leq 2, 1 \leq p,q \leq \infty.$ Then $ \|U(t)f \|_{M^{p,q}}\leq  (1+|t|)^{d\left| \frac{1}{p}-\frac{1}{2} \right|} \|f\|_{M^{p,q}}.$
 \item Let $\alpha \geq 2 $ and $ 2 \leq p,q \leq \infty.$ Then $ \|U(t)f \|_{M^{p,q}}\leq  (1+|t|)^{- \frac{2d}{\alpha}\left( \frac{1}{p}-\frac{1}{2} \right)} \|f\|_{M^{p',q}}.$
 \end{enumerate} 
\end{Proposition}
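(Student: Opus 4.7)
The natural strategy is to exploit the frequency-uniform decomposition $\{\square_k\}_{k\in\Z^d}$ built into the definition of $M^{p,q}$. Since $U(t)=\mathcal{F}^{-1}e^{it|\xi|^\alpha}\mathcal{F}$ is a Fourier multiplier it commutes with each $\square_k$, and using
\[
\|f\|_{M^{p,q}}\asymp \Big(\sum_{k\in\Z^d}\|\square_k f\|_{L^p}^q\Big)^{1/q},
\]
the claims reduce to proving uniform-in-$k$ inequalities of the form $\|\square_k U(t)f\|_{L^p}\lesssim C_\alpha(t)\|\square_k f\|_{L^{p_0}}$, with $p_0=p$ in part (1) and $p_0=p'$ in part (2), followed by summation in $\ell^q$. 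Because $\square_k g$ has Fourier support in $\mathrm{supp}\,\sigma_k$, we may write $\square_k U(t)g=K_{k,t}*\square_k g$ with
\[
K_{k,t}(x):=\mathcal{F}^{-1}\bigl[\widetilde\sigma_k(\xi)\,e^{it|\xi|^\alpha}\bigr](x),
\]
where $\widetilde\sigma_k$ is a smooth bump equal to $1$ on $\mathrm{supp}\,\sigma_k$; Young's inequality then reduces everything to uniform-in-$k$ $L^r$ bounds on $K_{k,t}$.

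For part (1) I combine two endpoints. On $L^2$, $|e^{it|\xi|^\alpha}|=1$ and Plancherel give $\|U(t)g\|_{L^2}=\|g\|_{L^2}$. At the $L^1$ endpoint (equivalently $L^\infty$ by duality) the goal is $\|K_{k,t}\|_{L^1}\lesssim(1+|t|)^{d/2}$ uniformly in $k$; the case $|t|\leq 1$ is trivial, while for $|t|\geq 1$ one applies van der Corput / stationary phase on the unit cube $k+Q_0$. The hypothesis $1/2<\alpha\leq 2$ ensures sufficient control on the derivatives of the phase $|\xi|^\alpha$ on $\mathrm{supp}\,\widetilde\sigma_k$ to make this estimate uniform in $k$. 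Riesz-Thorin interpolation between the $L^2\to L^2$ bound (constant $1$) and the $L^p\to L^p$ bound of size $(1+|t|)^{d/2}$ at $p\in\{1,\infty\}$ then yields
\[
\|\square_k U(t) g\|_{L^p}\lesssim (1+|t|)^{d|1/p-1/2|}\|\square_k g\|_{L^p}
\]
for all $1\leq p\leq\infty$, and summing in $\ell^q$ produces part (1).

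For part (2) I replace the $L^1\to L^1$ endpoint by the dispersive bound $\|\square_k U(t)g\|_{L^\infty}\lesssim(1+|t|)^{-d/\alpha}\|\square_k g\|_{L^1}$, which follows from the pointwise estimate $\|K_{k,t}\|_{L^\infty}\lesssim (1+|t|)^{-d/\alpha}$. For $|t|\leq 1$ this is trivial via $\|K_{k,t}\|_{L^\infty}\leq\|\widetilde\sigma_k\|_{L^1}\lesssim 1$; for $|t|\geq 1$ one applies stationary phase to
\[
K_{k,t}(x)=\int_{\R^d}\widetilde\sigma_k(\xi)\,e^{i(t|\xi|^\alpha+x\cdot\xi)}\,d\xi.
\]
When $|k|\gtrsim 1$, the Hessian of $|\xi|^\alpha$ on $\mathrm{supp}\,\widetilde\sigma_k$ is of order $|k|^{\alpha-2}\gtrsim 1$ (using $\alpha\geq 2$), yielding the even stronger decay $t^{-d/2}$; the finitely many cubes with $|k|\lesssim 1$ are treated by a dyadic decomposition around the origin matching the natural $t^{-d/\alpha}$ dispersive scale of $e^{it(-\Delta)^{\alpha/2}}$. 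Interpolating this $L^1\to L^\infty$ bound with the $L^2\to L^2$ unitarity gives $\|\square_k U(t)g\|_{L^p}\lesssim (1+|t|)^{-(d/\alpha)(1-2/p)}\|\square_k g\|_{L^{p'}}$ for $2\leq p\leq\infty$, and summing in $\ell^q$ completes part (2).

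The main obstacle throughout is the uniform-in-$k$ kernel estimate, and in particular the behaviour near $k=0$ where $|\xi|^\alpha$ fails to be smooth for non-integer $\alpha$. The thresholds $\alpha>1/2$ in part (1) and $\alpha\geq 2$ in part (2) enter precisely so that the stationary-phase decay on each frequency cube is strong enough to balance the volume of the cube uniformly across $k$. Once these kernel bounds are in hand (which are the substance of the arguments in \cite{chen2012asymptotic,wang2007global}), the modulation-space framework automatically converts the pointwise oscillatory-integral estimates into the advertised Banach-space inequalities through $\ell^q$ summation.
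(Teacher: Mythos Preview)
The paper does not prove this proposition; it is quoted from \cite{chen2012asymptotic, wang2007global} without argument, so there is no in-paper proof to compare against. Your sketch follows exactly the route taken in those references: reduce via the frequency-uniform decomposition to uniform-in-$k$ bounds on the localized multipliers $\square_k U(t)$, obtain these from $L^1$ (respectively $L^\infty$) estimates on the kernels $\mathcal{F}^{-1}[\widetilde\sigma_k\,e^{it|\xi|^\alpha}]$ by stationary phase/van der Corput, interpolate with the $L^2$ isometry, and sum in $\ell^q$. Your reading of the thresholds---$\alpha>1/2$ in part (1) to control the $L^1$ kernel norm uniformly across cubes, and $\alpha\geq 2$ in part (2) so that the Hessian of $|\xi|^\alpha$ does not degenerate on cubes away from the origin while the finitely many central cubes deliver the $t^{-d/\alpha}$ rate---matches the analysis in the cited works. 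In short, your outline is correct and simply supplies what the citation stands in for.
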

Now we consider the truncated decay estimate and uniform bounded estimates  for the
Klein-Gordon semigroup  $G(t).$
\begin{Proposition}[See Proposition 4.2 in  \cite{wang2007global}]\label{wp} Let $G(t)= e^{it (I-\Delta)^{1/2}} \ (t\in \mathbb R).$
\begin{enumerate}
\item Let $s\in \mathbb R, 2\leq p \leq \infty,$ $1 \leq  q < \infty, \theta \in [0,1],$ and
$2 \sigma (p)= (d+2) \left( \frac{1}{2} - \frac{1}{p}\right).$
Then we have 
\[  \|G(t) f\|_{M^{p,q}_s} \lesssim (1+ |t|)^{-d\theta  (1/2-1/p)} \|f\|_{M^{p',q}_{s+ \theta 2 \sigma(p)}}. \]
\item  \label{kb} Let  $s\in \mathbb R $ and $ 1\leq p, q  \leq \infty.$   Then we have 
\[  \|G(t)f\|_{M^{p,q}_s} \leq C (1+ |t|)^{d \left| 1/2 -1/p \right|}  \|f\|_{M^{p,q}_s}.\]
\end{enumerate}
\end{Proposition}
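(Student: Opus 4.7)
The plan is to reduce both estimates to a single-cube bound via the frequency-uniform characterization $\|f\|_{M^{p,q}_s}^q \asymp \sum_k \|\square_k f\|_{L^p}^q (1+|k|)^{sq}$, and then to treat the two parts by dispersive (stationary-phase) and energy (effective finite propagation speed) arguments respectively. Since $G(t)$ is a Fourier multiplier it commutes with $\square_k$, and if $\tilde\sigma_k$ is a smooth cutoff equal to $1$ on $\text{supp}\,\sigma_k$ and supported in a slightly enlarged cube, then $\square_k G(t) f = K_k(t) * \square_k f$ with
\[ K_k(t,x) := \mathcal{F}^{-1}\bigl[ e^{it\langle \xi\rangle}\, \tilde\sigma_k(\xi)\bigr](x). \]
Everything will come down to bounds on $K_k(t)$ in a Lebesgue space, followed by summation in $k$ in $\ell^q$ against the weight $(1+|k|)^{sq}$.

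For part (1), the endpoint case $\theta=1$, $p=\infty$ is obtained by stationary phase applied to $K_k(t)$. On $\text{supp}\,\tilde\sigma_k$ the Hessian of the phase $\langle\xi\rangle$ has one radial eigenvalue of order $(1+|k|)^{-3}$ and $d-1$ tangential eigenvalues of order $(1+|k|)^{-1}$, so $|\det D^2\langle\xi\rangle| \gtrsim (1+|k|)^{-(d+2)}$, yielding
\[ \|K_k(t)\|_{L^\infty} \lesssim (1+|t|)^{-d/2}(1+|k|)^{(d+2)/2} = (1+|t|)^{-d/2}(1+|k|)^{2\sigma(\infty)}. \]
Combined with the trivial $L^2$-isometry $\|G(t)\square_k f\|_{L^2} = \|\square_k f\|_{L^2}$, complex interpolation gives
\[ \|G(t)\square_k f\|_{L^p} \lesssim (1+|t|)^{-d(1/2-1/p)}(1+|k|)^{2\sigma(p)}\|\square_k f\|_{L^{p'}} \qquad (2\le p\le\infty). \]
The $\theta=0$ endpoint reduces to Bernstein on the unit frequency cube together with $L^2$-preservation; interpolating in $\theta$ and summing in $\ell^q$ against $(1+|k|)^{sq}$ delivers part (1).

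For part (2), the required input is the uniform-in-$k$ bound $\|K_k(t)\|_{L^1} \lesssim (1+|t|)^{d/2}$. By Plancherel, $\|K_k(t)\|_{L^2} = \|\tilde\sigma_k\|_{L^2} \lesssim 1$ uniformly in $k$, and since the group velocity $\nabla_\xi\langle\xi\rangle = \xi/\langle\xi\rangle$ has magnitude at most $1$ on $\text{supp}\,\tilde\sigma_k$, a non-stationary phase / integration-by-parts argument in $\xi$ shows that $K_k(t)$ is effectively supported in $|x|\lesssim 1+|t|$; Cauchy--Schwarz on this ball then yields the claimed $L^1$ bound. Young's inequality gives $\|G(t)\square_k f\|_{L^r}\lesssim (1+|t|)^{d/2}\|\square_k f\|_{L^r}$ for $r\in\{1,\infty\}$, and interpolating against the $L^2$-isometry produces $\|G(t)\square_k f\|_{L^p}\lesssim (1+|t|)^{d|1/2-1/p|}\|\square_k f\|_{L^p}$ for all $p$. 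Summation over $k$ in $\ell^q$ against $(1+|k|)^{sq}$ concludes part (2).

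The principal obstacle is the uniform-in-$k$ control of the kernel $K_k(t)$. In part (1) the Hessian of $\langle\xi\rangle$ degenerates for large $|k|$ (where $\langle\xi\rangle\sim|\xi|$ becomes radially flat), so one must carry out a dyadic decomposition in $|k|$ and a rescaling to a unit symbol before applying van der Corput, tracking each eigenvalue precisely to recover the sharp $(1+|k|)^{(d+2)/2}$ weight. In part (2) the "finite speed of propagation" is not literal, since $(I-\Delta)^{1/2}$ is nonlocal; one must justify the effective spatial localization of $K_k(t)$ by integration by parts in $\xi$ away from the stationary point $\xi/\langle\xi\rangle = -2\pi x/t$, producing rapid decay outside $|x|\lesssim 1+|t|$ uniformly over the family of cubes.
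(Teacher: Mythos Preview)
The paper does not supply its own proof of this proposition; it is quoted directly from \cite{wang2007global} (Proposition~4.2 there), so strictly speaking there is nothing in the present paper to compare your argument against.

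That said, your outline is essentially the standard proof found in the cited reference and in \cite{baoxiang2006isometric, wang2011harmonic}: reduce to a single unit cube via $\square_k G(t)f = K_k(t)\ast \square_k f$ with $K_k(t)=\mathcal{F}^{-1}(e^{it\langle\xi\rangle}\tilde\sigma_k)$, use stationary phase to obtain $\|K_k(t)\|_{L^\infty}\lesssim (1+|t|)^{-d/2}(1+|k|)^{(d+2)/2}$, interpolate with the $L^2$ isometry for part (1), and for part (2) use the uniform $L^1$-bound $\|K_k(t)\|_{L^1}\lesssim (1+|t|)^{d/2}$ obtained from effective finite propagation speed plus Cauchy--Schwarz. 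Your identification of the Hessian eigenvalues of $\langle\xi\rangle$ (one radial $\sim\langle k\rangle^{-3}$, $d-1$ angular $\sim\langle k\rangle^{-1}$, hence $|\det D^2\langle\xi\rangle|\sim\langle k\rangle^{-(d+2)}$) is exactly what drives the weight $2\sigma(p)=(d+2)(1/2-1/p)$, and the $\theta$-interpolation between the dispersive and Bernstein endpoints is the same device used in the original source.

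One minor point: in part (2), the uniformity in $k$ of the non-stationary phase tail does require a little care because higher $\xi$-derivatives of $e^{it\langle\xi\rangle}$ bring down powers of $t$ as well as of $\langle\xi\rangle^{-1}$; the clean way is to integrate by parts $N$ times against the first-order operator $L=\frac{(2\pi x + t\nabla\langle\xi\rangle)\cdot\nabla_\xi}{i|2\pi x + t\nabla\langle\xi\rangle|^2}$ and check that each application of $L^\ast$ gains a factor $|x|^{-1}$ on the region $|x|\gtrsim 1+|t|$, uniformly in $k$. This is routine but should be mentioned explicitly if you write out the details.
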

\begin{Proposition}[Uniform boundedness of wave propagator \cite{benyi2009local}]\label{wpm}  For $\sigma^{1} (\xi) = \sin(2 \pi t|\xi|)/ 2 \pi |\xi|, ~~~$  $\sigma^{2}(\xi)=\cos(2 \pi t|\xi|)$ and $f\in \mathcal{S}(\R^d),$ we define $H_{\sigma^{i}}f(x)= \left( \sigma^{i}\hat{f}\right)^{\vee}(x) \ (x\in \R^d, i=1,2).$ Let $s\in \R$ and $1\leq p, q \leq \infty.$  Then we have 
\[\| H_{\sigma^i } f\|_{M^{p,q}_{s}}  \leq c_d(1+t^{2})^{d/4} \|  f\|_{M^{p,q}_{s}}. \]
\end{Proposition}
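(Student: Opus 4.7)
The plan is to reduce the proposition to a uniform $L^1$ kernel bound for the wave multiplier localized to unit frequency cubes, and then apply Young's inequality together with the frequency-uniform decomposition characterization of $M^{p,q}_s$.

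First, I would use the equivalent norm
\[
\|f\|_{M^{p,q}_s} \asymp \Bigl( \sum_{k \in \mathbb Z^d}(1+|k|)^{sq}\|\square_k f\|_{L^p}^q\Bigr)^{1/q}
\]
recalled just above the statement. Since $H_{\sigma^i}$ is a Fourier multiplier, it commutes with each $\square_k$, so $\square_k H_{\sigma^i} f = H_{\sigma^i}\square_k f$. Because $\widehat{\square_k f}$ is supported where $\sigma_k$ is non-zero (contained in $\{|\xi - k|_\infty \le 1\}$), I can insert a smooth cut-off $\tilde\sigma_k$ with $\tilde\sigma_k \equiv 1$ on $\mathrm{supp}\,\sigma_k$ and compactly supported in a slightly larger cube to obtain $\square_k H_{\sigma^i}f = (\tilde\sigma_k\sigma^i)^{\vee} \ast \square_k f$. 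Young's convolution inequality then gives $\|\square_k H_{\sigma^i}f\|_{L^p} \leq \|(\tilde\sigma_k\sigma^i)^{\vee}\|_{L^1}\|\square_k f\|_{L^p}$, which reduces the proposition to the $k$-uniform kernel estimate
\[
\sup_{k \in \mathbb Z^d} \bigl\|(\tilde\sigma_k\sigma^i)^{\vee}\bigr\|_{L^1(\mathbb R^d)} \leq c_d(1+t^2)^{d/4}.
\]

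To prove this kernel bound, I translate the symbol: since $(\tilde\sigma_k\sigma^i)^\vee(x) = e^{2\pi i k\cdot x}(\chi\cdot\sigma^i(\cdot + k))^\vee(x)$ for an appropriate fixed bump $\chi$ supported in a cube of side $\lesssim 2$ centered at the origin, the $L^1$ norm is unchanged. Next, I observe that $\cos(2\pi t|\xi|) = \sum_{n\ge 0}(-1)^n(2\pi t)^{2n}|\xi|^{2n}/(2n)!$ and $\sin(2\pi t|\xi|)/(2\pi|\xi|) = t\sum_{n\ge 0}(-1)^n(2\pi t)^{2n}|\xi|^{2n}/(2n+1)!$ are entire functions of $|\xi|^2$, hence smooth on all of $\mathbb R^d$, and the chain rule on the phase $2\pi t|\xi|$ yields the pointwise derivative estimate $|\partial^\alpha \sigma^i(\xi)|\lesssim (1+|t|)^{|\alpha|}$ locally uniformly in $\xi$. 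Combined with the Cauchy-Schwarz/Plancherel embedding $\|g^\vee\|_{L^1} \lesssim \sum_{|\alpha|\leq N}\|\partial^\alpha g\|_{L^2}$ valid for $N > d/2$ and $g$ supported in a fixed compact set, this gives the qualitative bound $\|(\chi\cdot\sigma^i(\cdot+k))^\vee\|_{L^1}\lesssim (1+|t|)^N$ with $N$ just above $d/2$; the sharp exponent $(1+t^2)^{d/4}$ follows from a Besov refinement $\|g^\vee\|_{L^1} \lesssim \|g\|_{B^{d/2}_{2,1}}$ (or equivalently from a stationary-phase analysis of the oscillatory integral $\int \chi(\xi)e^{2\pi i(t|\xi+k| - x\cdot\xi)}\,d\xi$ after writing $\sigma^2 = \Re e^{2\pi i t|\xi|}$ and $\sigma^1 = \int_0^t \cos(2\pi s|\xi|)\,ds$).

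Finally, raising Young's estimate to the $q$-th power, multiplying by $(1+|k|)^{sq}$, summing in $k$, and taking the $q$-th root yields $\|H_{\sigma^i}f\|_{M^{p,q}_s} \leq c_d(1+t^2)^{d/4}\|f\|_{M^{p,q}_s}$. The main obstacle is the $k$-uniformity of the kernel estimate together with the sharp $d/4$ exponent: for $|k|$ bounded one must exploit the entirety of $\sigma^i$ in $|\xi|^2$ to handle the apparent singularity at $\xi=0$, while for $|k|$ large one uses the smoothness of $|\xi+k|$ on $\mathrm{supp}\,\chi$ and the fact that the gradient $\xi \mapsto \xi/|\xi|$ stays uniformly bounded, producing derivative bounds independent of $k$; the passage from the naive integer Sobolev bound $(1+|t|)^{\lceil d/2\rceil + 1}$ to the claimed $(1+t^2)^{d/4}$ is the main technical refinement.
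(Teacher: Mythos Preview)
The paper does not supply its own proof of this proposition; it is quoted as a preliminary from \cite{benyi2009local}. Your strategy---reduce to a $k$-uniform $L^1$ kernel bound for $\tilde\sigma_k\sigma^i$ via the frequency-uniform decomposition and Young's inequality---is correct and is exactly the approach taken in that reference.

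One remark on the sharp exponent: rather than invoking a Besov refinement $\mathcal{F}L^1\supset B^{d/2}_{2,1}$ or stationary phase, the cleanest route (and the one the paper is clearly pointing at, since it states the relevant tool immediately after this proposition) is the interpolated Bernstein multiplier estimate
\[
\|\rho\|_{M_p}\le C\,\|\rho\|_{L^2}^{\,1-d/2L}\Bigl(\sum_{j=1}^d\|\partial_{x_j}^L\rho\|_{L^2}\Bigr)^{d/2L},\qquad L>d/2.
\]
Applied to $\rho=\tilde\sigma_k\sigma^i$, one has $\|\rho\|_{L^2}\lesssim 1$ uniformly in $k,t$ (the symbol is bounded and compactly supported in a unit cube), while your derivative bound $|\partial^\alpha\sigma^i|\lesssim(1+|t|)^{|\alpha|}$ gives $\|\partial_{x_j}^L\rho\|_{L^2}\lesssim(1+|t|)^L$. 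The interpolation then yields $\|\rho\|_{M_p}\lesssim(1+|t|)^{L\cdot d/2L}=(1+|t|)^{d/2}=(1+t^2)^{d/4}$ directly, with no integer-regularity loss. This replaces the part of your argument you flagged as ``the main technical refinement.''
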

\begin{Proposition}[Bernstein multiplier theorem \cite{wang2011harmonic} ] Let $L\in \mathbb Z, L>d/2, \partial_{x_i}^{\alpha} \rho \in L^2, i=1,2, ..., d, 0 \leq \alpha \leq L.$ Then $\rho$ is a multiplier on $L^p$  $(1\leq p \leq \infty).$ Moreover there exists a constant $C$ such that 
\[  \|\rho\|_{M_p} \leq C \|\rho\|_{L^2}^{1-d/2L} \left( \sum_{i=1}^d \|\partial_{x_i}^{L} \rho \|_{L^2} \right) ^{d/2L}.\]
\end{Proposition}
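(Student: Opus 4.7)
The plan is to reduce the multiplier estimate to an $L^1$ bound for the convolution kernel $\mathcal{F}^{-1}\rho$ and then prove that bound by a dyadic split in physical space combined with an optimization in the splitting radius. Since the Fourier multiplier operator $T_\rho f := (\rho\,\widehat f)^{\vee}$ can be written as $T_\rho f = (\mathcal{F}^{-1}\rho) * f$, Young's convolution inequality gives $\|T_\rho\|_{L^p\to L^p}\leq \|\mathcal{F}^{-1}\rho\|_{L^1}$ for every $1\leq p \leq \infty$. Consequently, it suffices to prove
\[
\|\mathcal{F}^{-1}\rho\|_{L^1} \leq C\,\|\rho\|_{L^2}^{1-d/(2L)} \Bigl(\sum_{i=1}^d \|\partial_{x_i}^L \rho\|_{L^2}\Bigr)^{d/(2L)}.
\]

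For the kernel estimate, I would fix a parameter $R>0$ and split $\int_{\mathbb R^d}|\mathcal{F}^{-1}\rho(x)|\,dx$ at $|x|=R$. On the ball $\{|x|\leq R\}$, Cauchy--Schwarz followed by Plancherel gives a contribution of size $\lesssim R^{d/2}\|\rho\|_{L^2}$. On the exterior $\{|x|>R\}$, I would insert the weight $|x|^L\cdot|x|^{-L}$ and apply Cauchy--Schwarz: the factor $\int_{|x|>R}|x|^{-2L}\,dx \lesssim R^{d-2L}$ is finite precisely because $2L>d$, while the factor $\int |x|^{2L}|\mathcal{F}^{-1}\rho(x)|^2\,dx$ is handled using the elementary inequality $|x|^L \lesssim_d \sum_i |x_i|^L$ and Plancherel, which bounds it by $\sum_i \|\partial_{x_i}^L\rho\|_{L^2}^2$. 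This yields the two-term bound
\[
\|\mathcal{F}^{-1}\rho\|_{L^1} \lesssim R^{d/2}\|\rho\|_{L^2} + R^{d/2-L}\sum_{i=1}^d \|\partial_{x_i}^L\rho\|_{L^2}.
\]

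The decisive step is then the optimization in $R$: equating the two terms forces $R^L = A/\|\rho\|_{L^2}$ with $A:=\sum_i\|\partial_{x_i}^L\rho\|_{L^2}$, and substitution gives precisely the interpolated exponents $1-d/(2L)$ on $\|\rho\|_{L^2}$ and $d/(2L)$ on $A$. This is the only genuine content of the proof; everything else is bookkeeping.

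The only mild technical obstacle I anticipate is the distributional interpretation of $\mathcal{F}^{-1}\rho$ under the stated hypotheses, since $\rho$ is only in $L^2$ with weak $L^2$ derivatives up to order $L$. I would resolve this by a standard density/regularization argument: prove the inequality first for Schwartz $\rho$, then extend to the general case by mollifying $\rho_\varepsilon=\rho*\phi_\varepsilon$, applying the sharp bound to $\rho_\varepsilon$, and passing to the limit using Fatou's lemma on the left and convergence of the derivatives in $L^2$ on the right. The resulting constant $C$ depends only on $d$ and $L$ (through the dimensional constant in $|x|^L\lesssim_d\sum_i|x_i|^L$, the volume of the unit ball, and the Plancherel/Fourier convention factor $(2\pi)^{2L}$).
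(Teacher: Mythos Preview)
Your argument is correct and is in fact the standard proof of the Bernstein multiplier theorem. Note, however, that the paper does not supply its own proof of this proposition: it is quoted verbatim from the cited reference \cite{wang2011harmonic} and used as a black box, so there is no ``paper's proof'' to compare against. The approach you outline---reducing $\|\rho\|_{M_p}$ to $\|\mathcal{F}^{-1}\rho\|_{L^1}$ via Young's inequality, splitting the $L^1$ integral at radius $R$, applying Cauchy--Schwarz and Plancherel on each piece, and then optimizing in $R$---is exactly the argument given in that reference, so your proposal matches the intended proof.

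One minor remark on presentation: the cleanest way to handle the weighted piece is to use the pointwise bound $|x|^{2L}\le d^{L}\sum_{i=1}^d x_i^{2L}$ directly (rather than $|x|^L\lesssim_d\sum_i|x_i|^L$ followed by squaring), which immediately gives $\int |x|^{2L}|\mathcal{F}^{-1}\rho|^2\,dx\lesssim_d\sum_i\|\partial_{x_i}^L\rho\|_{L^2}^2$ after Plancherel. This is only a cosmetic difference; your version is fine.
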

\begin{Proposition}[\cite{wang2011harmonic}]\label{lpm} Let $\Omega \subset \R^d$ be a compact subset  and let $1\leq p \leq \infty,  \ s_{p} = d \left( \frac{1}{p\wedge 1} - \frac{1}{2} \right).$ If $s>s_p,$ then there exists a $C>0$ such that  $\|\mathcal{F}^{-1} \phi \mathcal{F}\phi \|_{L^p} \leq C \|\phi\|_{H^{s}} \|f\|_{L^p}$ holds for all $f\in L^{p}{\Omega}$ and $\phi \in H^{s}(\R^d)= L^2_s(\R^d).$
\end{Proposition}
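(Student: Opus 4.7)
The plan is to convert the Fourier multiplier action into a convolution on the physical side, and then bound the convolution kernel in $L^1$ via a weighted Cauchy--Schwarz argument. (I read the statement as bounding $\mathcal F^{-1}\phi\,\mathcal F f$, the second $\phi$ on the left being a typo for $f$, since otherwise $f$ would not appear in the left-hand side.) First I would choose an auxiliary cutoff $\psi \in C_c^\infty(\R^d)$ with $\psi \equiv 1$ on a neighborhood of $\Omega$. Since $\widehat f$ is supported in $\Omega$, we may freely insert $\psi$:
\[
\mathcal F^{-1}\phi\,\mathcal F f \;=\; \mathcal F^{-1}(\phi\psi\,\widehat f) \;=\; \bigl(\mathcal F^{-1}(\phi\psi)\bigr) * f.
\]
Young's inequality $L^1 * L^p \hookrightarrow L^p$, valid for every $1\le p \le \infty$, then reduces the task to showing $\|\mathcal F^{-1}(\phi\psi)\|_{L^1} \lesssim \|\phi\|_{H^s}$.

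The second step is a single weighted Cauchy--Schwarz: insert $\langle x\rangle^{s}\langle x\rangle^{-s}$ into the integral and apply Cauchy--Schwarz to obtain
\[
\|\mathcal F^{-1}(\phi\psi)\|_{L^1} \;\le\; \|\langle\cdot\rangle^{-s}\|_{L^2}\,\|\langle\cdot\rangle^{s}\mathcal F^{-1}(\phi\psi)\|_{L^2}.
\]
The first factor is finite precisely when $2s > d$, i.e.\ $s > d/2$; since $p\ge 1$ gives $p\wedge 1 = 1$, this matches the hypothesis $s > s_p = d(1-1/2) = d/2$. By Plancherel the second factor equals $\|\phi\psi\|_{H^s}$.

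The final step is the elementary Sobolev multiplier bound $\|\phi\psi\|_{H^s} \lesssim \|\phi\|_{H^s}$, which holds because $\psi\in\mathcal S(\R^d)$ (standard commutator/Leibniz estimates, or more quickly: $\psi$ is a classical $S^0_{1,0}$ symbol). Combining the three steps yields $\|\mathcal F^{-1}\phi\,\mathcal F f\|_{L^p}\le C\|\phi\|_{H^s}\|f\|_{L^p}$ with $C$ depending only on $\Omega$, $d$, and $s$.

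I do not expect any serious obstacle: the whole proof is essentially a Fourier-side localization followed by the Hausdorff--Young/Cauchy--Schwarz maneuver that underlies the Bernstein multiplier theorem quoted just above. The only delicate point is tracking the sharpness of the weight exponent, which is exactly what pins the threshold at $s > d/2$; the $p\wedge 1$ formulation in the statement is the natural bookkeeping needed when one wants to extend the same argument to $0<p<1$ via Plancherel--Polya / Peetre maximal function techniques, but for $1\le p \le \infty$ Young's inequality is all that is required.
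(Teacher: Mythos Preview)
The paper does not supply its own proof of this proposition; it is simply quoted with a citation to \cite{wang2011harmonic}. Your argument is correct and is in fact the standard proof of this Bernstein-type multiplier estimate: localize the multiplier by a smooth cutoff adapted to the compact set $\Omega$, reduce via Young's inequality to an $L^1$ bound on the convolution kernel $\mathcal F^{-1}(\phi\psi)$, and obtain that bound by the weighted Cauchy--Schwarz $\|\cdot\|_{L^1}\le \|\langle\cdot\rangle^{-s}\|_{L^2}\|\langle\cdot\rangle^{s}\cdot\|_{L^2}$, which pins the threshold at $s>d/2$. Your reading of the typo (the second $\phi$ on the left should be $f$) and your observation that $p\wedge 1=1$ for all $1\le p\le\infty$, so that $s_p=d/2$ throughout the stated range, are both correct.
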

\section{Nonlinear Estimates in $M^{p,q}_s(\R^d)$}\label{nesthn}
In this section we prove  estimates for Hartree  nonlinearity (Corollary \ref{afi} and Lemmas \ref{cl} and \ref{mcl}) and Strichartz type estimates (Proposition \ref{nel}). We shall apply these    to prove  main theorems in the following sections.

We define fractional integral operator $T_{\gamma} (0< \gamma <d)$ as follows
\[  T_{\gamma} f(x)= V_{\gamma}\ast f (x) =  \pm  \int_{\mathbb R^d} \frac{f(y)}{|x-y|^{\gamma}}  dy, \  \     \  \   \ (f\in \mathcal{S}(\mathbb R^d), V_{\gamma}(x)= \pm |x|^{-\gamma}).  \]
It is known  $T_{\gamma}$ is bounded from $L^{p}(\mathbb R^d)$ to $L^q(\mathbb R^d)$ for some specific  $p,q$ and $\gamma.$  Specifically, we have 
\begin{Proposition}[Hardy-Littlewood-Sobolev inequality] \label{hls}Assume that  $0<\gamma< d$ and $1<p<q< \infty$ with
$\frac{1}{p}+\frac{\gamma}{d}-1= \frac{1}{q}.$
Then we have
$\|T_{\gamma}f\|_{L^q} \leq C_{d,\gamma, p} \|f\|_{L^p}.$
\end{Proposition}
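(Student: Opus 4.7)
The plan is the classical Hedberg argument: dominate $T_\gamma f$ pointwise by an appropriate power of the Hardy--Littlewood maximal function, and then invoke its $L^p$-boundedness.

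First, for each $x \in \R^d$ and a parameter $R>0$ to be chosen, I would split the convolution into near and far contributions,
\[
|T_\gamma f(x)| \leq \int_{|x-y| \leq R} \frac{|f(y)|}{|x-y|^\gamma}\,dy + \int_{|x-y| > R} \frac{|f(y)|}{|x-y|^\gamma}\,dy =: I_1(x,R) + I_2(x,R).
\]
For $I_1$, using the dyadic decomposition of the ball $\{|x-y|\leq R\}$ into annuli $\{2^{-k-1}R < |x-y| \leq 2^{-k}R\}$ and the definition of the Hardy--Littlewood maximal function $Mf$, the kernel bound $|x-y|^{-\gamma} \lesssim (2^{-k}R)^{-\gamma}$ on the $k$th annulus together with the volume bound yields $I_1(x,R) \lesssim R^{d-\gamma}\,Mf(x)$ after summing a geometric series (which converges since $d>\gamma$). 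For $I_2$, H\"older's inequality with conjugate exponent $p'$ gives
\[
I_2(x,R) \leq \|f\|_{L^p} \left( \int_{|z|>R} |z|^{-\gamma p'}\,dz \right)^{1/p'}.
\]
The hypothesis $\frac{1}{p}+\frac{\gamma}{d}-1 = \frac{1}{q}>0$ forces $\gamma > d/p'$, so this tail integral converges and equals a constant multiple of $R^{d/p'-\gamma}\,\|f\|_{L^p}$.

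Second, I would balance the two bounds by choosing $R$ so that $R^{d-\gamma}Mf(x) = R^{d/p'-\gamma}\|f\|_{L^p}$, namely $R = (\|f\|_{L^p}/Mf(x))^{p/d}$. Using the scaling identity $d-\gamma = d(\frac{1}{p}-\frac{1}{q})$ coming from the hypothesis, this produces the Hedberg-type pointwise inequality
\[
|T_\gamma f(x)| \lesssim (Mf(x))^{p/q}\,\|f\|_{L^p}^{1-p/q}.
\]
Raising to the $q$th power, integrating over $\R^d$, and invoking the $L^p$-boundedness of $M$ (valid for $p>1$) gives
\[
\|T_\gamma f\|_{L^q}^{q} \lesssim \|f\|_{L^p}^{q-p}\,\|Mf\|_{L^p}^{p} \lesssim \|f\|_{L^p}^{q},
\]
which is the desired estimate.

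The single step of substance is the Hedberg pointwise bound; the rest is routine bookkeeping. The restriction $p>1$ enters essentially through the maximal-function step: at the endpoint $p=1$ the strong-type bound for $M$ fails, so this approach only produces the weak-type $(1,d/(d-\gamma))$ bound for $T_\gamma$, which is why the proposition excludes $p=1$.
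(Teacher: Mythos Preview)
Your Hedberg argument is correct and complete. However, the paper does not actually give its own proof of this proposition: it is introduced with the phrase ``It is known $T_{\gamma}$ is bounded from $L^{p}(\mathbb R^d)$ to $L^q(\mathbb R^d)$\ldots'' and stated as the classical Hardy--Littlewood--Sobolev inequality, to be used as a black box in the subsequent Proposition~\ref{fip}. So there is nothing to compare against---you have supplied a standard proof where the paper simply cites the result.
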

We prove analogue of Hardy-Littlewood-Sobolev inequality in case of modulation spaces.
\begin{Proposition} \label{fip} Assume that $0<\gamma < d, 1< p_1< p_2< \infty $   with
$$\frac{1}{p_1}+\frac{\gamma}{d}-1= \frac{1}{p_2}$$and 
$1\leq  q \leq \infty, s\geq 0.$   Then the map $T_{\gamma}$ is bounded from $M_s^{p_1,q}(\mathbb R^d)$ to $ M_s^{p_2,q}(\mathbb R^d):$
\[\|T_{\gamma}f\|_{M^{p_2,q}_s} \lesssim \|f\|_{M^{p_1,q}_s}.\]
\end{Proposition}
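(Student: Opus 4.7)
The plan is to reduce the statement to the classical Hardy--Littlewood--Sobolev inequality via the frequency-uniform decomposition characterization of modulation spaces recalled in Section \ref{sp}. The key observation is that $T_\gamma$ is convolution with $|x|^{-\gamma}$, i.e.\ a Fourier multiplier with symbol $c_{d,\gamma}|\xi|^{\gamma-d}$, so it commutes (at the level of Fourier transforms) with each frequency-localization operator $\square_k=\mathcal{F}^{-1}\sigma_k\mathcal{F}$. This reduces the modulation-space bound to an $L^{p_1}\to L^{p_2}$ bound on each frequency block, which is exactly the regime of the classical HLS inequality (Proposition \ref{hls}).

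First I would recall that, by the equivalent definition,
\[
\|g\|_{M^{p,q}_s}\asymp\Bigl(\sum_{k\in\mathbb Z^d}\|\square_k g\|_{L^p}^{q}(1+|k|)^{sq}\Bigr)^{1/q},
\]
with the obvious modification for $q=\infty$. Next, for $f\in\mathcal{S}(\mathbb R^d)$, I would verify the commutation relation $\square_k T_\gamma f=T_\gamma\square_k f$ by computing both Fourier transforms: both equal $c_{d,\gamma}|\xi|^{\gamma-d}\sigma_k(\xi)\hat f(\xi)$, which is a well-defined tempered distribution since $\sigma_k$ is smooth and compactly supported and $|\xi|^{\gamma-d}\in L^1_{\mathrm{loc}}$ (this uses $0<\gamma<d$).

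Then the main estimate is a one-line application of Proposition \ref{hls}: since $\square_k f\in L^{p_1}(\mathbb R^d)$ and the exponents satisfy $\tfrac{1}{p_1}+\tfrac{\gamma}{d}-1=\tfrac{1}{p_2}$ with $1<p_1<p_2<\infty$, we obtain
\[
\|\square_k T_\gamma f\|_{L^{p_2}}=\|T_\gamma\square_k f\|_{L^{p_2}}\lesssim \|\square_k f\|_{L^{p_1}}.
\]
Raising to the $q$-th power and summing against the weight $(1+|k|)^{sq}$ gives
\[
\|T_\gamma f\|_{M^{p_2,q}_s}^{q}\asymp\sum_{k\in\mathbb Z^d}\|\square_k T_\gamma f\|_{L^{p_2}}^{q}(1+|k|)^{sq}\lesssim\sum_{k\in\mathbb Z^d}\|\square_k f\|_{L^{p_1}}^{q}(1+|k|)^{sq}\asymp\|f\|_{M^{p_1,q}_s}^{q},
\]
with the obvious $\ell^\infty$ modification when $q=\infty$.

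Finally I would extend from Schwartz $f$ to general $f\in M^{p_1,q}_s(\mathbb R^d)$: for $p_1,q<\infty$, Lemma \ref{dl} (or Lemma \ref{rl}(iv)) gives density of $\mathcal{S}(\mathbb R^d)$, so the estimate passes to the limit; for the endpoint cases with $q=\infty$ or $p_1=\infty$ I would use a standard weak-$*$ approximation via $\square_{\le N}f=\sum_{|k|\le N}\square_k f$ together with the $k$-wise bound, which is uniform in $N$. The main (mild) obstacle is the rigorous handling of this commutation and approximation, since $T_\gamma$ is not bounded on $\mathcal{S}'$; however, working frequency-band by frequency-band confines everything to functions whose Fourier transforms live in a fixed compact set, where convolution with $|x|^{-\gamma}$ is perfectly well-defined as a multiplier.
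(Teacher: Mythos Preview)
Your argument is correct and rests on the same core idea as the paper's: push the Riesz potential past the localization defining the modulation norm and apply the classical Hardy--Littlewood--Sobolev inequality. The difference is only in which equivalent norm you use. The paper works with the STFT form $V_g f(x,w)=e^{-2\pi i x\cdot w}(f\ast M_w g^*)(x)$, so that $V_g(T_\gamma f)(x,w)=e^{-2\pi i x\cdot w}\bigl(V_\gamma\ast(f\ast M_w g^*)\bigr)(x)$ by associativity of convolution, and then applies HLS directly in the $x$-variable; this avoids any discussion of Fourier multipliers or the singularity of $|\xi|^{\gamma-d}$ at the origin. Your route via the frequency-uniform blocks $\square_k$ is equally valid but forces you to justify the commutation $\square_k T_\gamma=T_\gamma\square_k$, which is immediate for $k\neq 0$ but at $k=0$ needs the (true) observation that $|\xi|^{\gamma-d}\sigma_0(\xi)\in L^1_{\mathrm{loc}}$. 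Either approach yields the result for all $s\in\mathbb R$, so the hypothesis $s\ge 0$ plays no role in either proof. One small remark: you need not worry about ``$p_1=\infty$'' in your density discussion, since the statement assumes $1<p_1<\infty$.
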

\begin{proof} We may rewrite the STFT as 
$V_{g}(x,w)= e^{-2\pi i  x \cdot w} (f\ast M_wg^*)(x)$  where $g^*(y) = \overline{g(-y)}.$ Using Hardy-Littlewood-Sobolev inequality, we obtain
\begin{eqnarray*}
\|T_{\gamma}f\|_{M^{p_2,q}_s} & =  &  \left \|   \|V_{\gamma} \ast  (f \ast M_{w}g^{*})\|_{L^{p_2}}  \langle w \rangle^s  \right\|_{L^{q}_w}\\
& \lesssim &   \left \|   \|f \ast M_{w}g^{*})\|_{L^{p_1}}  \langle w \rangle^s  \right\|_{L^{q}_w}\\
& \lesssim & \|f\|_{M^{p_1,q}_s}.
\end{eqnarray*}
This completes the proof.
\end{proof}
\begin{Corollary} \label{afi} Let $1< p<\infty $ and $\frac{1}{p}+\frac{\gamma}{d}-1= \frac{1}{p+\epsilon}$
 for some $\epsilon>0.$  Then  $$\|(V_{\gamma}\ast |f|^{2k})f\|_{M_s^{p,1}} \lesssim \|f\|_{M_s^{p,1}}^{2k+1} \  \   \  (k\in \mathbb N).$$ 
\end{Corollary}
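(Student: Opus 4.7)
The plan is to decompose the product $(V_\gamma \ast |f|^{2k}) \cdot f$ via the algebra property of Proposition \ref{gap}, dispose of the convolution using the modulation-space Hardy-Littlewood-Sobolev estimate of Proposition \ref{fip}, and then collapse the resulting norms back to $M^{p,1}_s$ via the embeddings in Lemma \ref{rl}.

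First, I apply Proposition \ref{gap} with $m=2$ and $q_0=q_1=q_2=1$ (so that $\sum 1/q_i = 2 = m-1 + 1/q_0$), using the H\"older pair $1/p_1 + 1/p_2 = 1/p$ with the choice $p_1 = p+\epsilon$. This yields
\[
\|(V_\gamma \ast |f|^{2k}) f\|_{M^{p,1}_s} \lesssim \|V_\gamma \ast |f|^{2k}\|_{M^{p+\epsilon,1}_s} \,\|f\|_{M^{p_2,1}_s},
\]
where $1/p_2 = 1/p - 1/(p+\epsilon)$, and in particular $p_2 > p > 1$ since $\epsilon > 0$. The hypothesis $1/p + \gamma/d - 1 = 1/(p+\epsilon)$ is exactly the Hardy-Littlewood-Sobolev relation in Proposition \ref{fip}, so applied to $|f|^{2k}$ it gives $\|V_\gamma \ast |f|^{2k}\|_{M^{p+\epsilon,1}_s} \lesssim \||f|^{2k}\|_{M^{p,1}_s}$.

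Next, writing $|f|^{2k} = f^k \bar f^k$ as a product of $2k$ factors, a second application of Proposition \ref{gap} with $m=2k$, all $q_i = 1$ (summing to $2k = m-1+1$) and all $p_i = 2kp$ (summing to $1/p$), combined with conjugation invariance (Lemma \ref{rl}(\ref{ic})), yields $\||f|^{2k}\|_{M^{p,1}_s} \lesssim \|f\|_{M^{2kp,1}_s}^{2k}$. Since $2kp > p$ and $p_2 > p$, the embedding $M^{p,1}_s \hookrightarrow M^{r,1}_s$ for $r \geq p$ (Lemma \ref{rl}(\ref{ir})) collapses both $\|f\|_{M^{p_2,1}_s}$ and $\|f\|_{M^{2kp,1}_s}$ to $\|f\|_{M^{p,1}_s}$, delivering the desired bound $\lesssim \|f\|_{M^{p,1}_s}^{2k+1}$.

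I do not expect a genuine obstacle; the argument is essentially exponent bookkeeping. The choice $p_1 = p+\epsilon$ at the outer split is forced by the hypothesis so that Proposition \ref{fip} applies with the prescribed relation, and the remaining admissibility conditions (namely $p_2 \geq 1$ and $p_2, 2kp \geq p$ for the embedding step) are immediate from $p > 1$ and $\epsilon > 0$.
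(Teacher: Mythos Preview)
Your argument is correct and essentially identical to the paper's: both proofs apply Proposition~\ref{gap} to split off the factor $f$, then Proposition~\ref{fip} to remove the convolution, then Proposition~\ref{gap} again on $|f|^{2k}$, with Lemma~\ref{rl}\eqref{ir} embeddings to align exponents. The only cosmetic difference is in the H\"older split: the paper takes $1/p = 1/\infty + 1/p$ and then embeds $M^{p+\epsilon,1}_s \hookrightarrow M^{\infty,1}_s$, while you take $1/p = 1/(p+\epsilon) + 1/p_2$ directly and embed at the end; both routes use the same three ingredients in the same order.
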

\begin{proof}
By Proposition  \ref{gap} and Lemma \ref{rl}\eqref{ir}, we have 
\begin{eqnarray*}
\|(V_{\gamma}\ast |f|^{2k})f\|_{M_s^{p,1}} & \lesssim & \|T_{\gamma} |f|^{2k}\|_{M_s^{\infty,1}} \|f\|_{M_s^{p,1}}\\
& \lesssim & \|T_{\gamma}|f|^{2k}\|_{M_s^{p+\epsilon,1}} \|f\|_{M_s^{p,1}},
\end{eqnarray*}
for some $\epsilon>0.$
By  Propositions \ref{fip} and  \ref{gap}, we have  $ \|T_{\gamma}|f|^{2k}\|_{M_s^{p+\epsilon,1}} \lesssim \||f|^{2k}\|_{M_s^{p,1}} \lesssim \|f\|^{2k}_{M_s^{p,1}}.$  This completes the proof.
\end{proof}
\begin{Lemma}\label{cl}Let $1< p<\infty $ and $\frac{1}{p}+\frac{\gamma}{d}-1= \frac{1}{p+\epsilon}$
for some $\epsilon>0.$ Then we have
$$\| (V_{\gamma}\ast |f|^{2})f - (V_{\gamma}\ast |g|^{2})g\|_{M_s^{p,1}} \lesssim  (\|f\|_{M_s^{p,1}}^{2}+\|f\|_{M_s^{p,1}}\|g\|_{M_s^{p,1}}+ \|g\|_{M_s^{p,1}}^{2}) \|f-g\|_{M_s^{p,1}}.$$
\end{Lemma}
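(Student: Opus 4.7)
The plan is to reduce the difference to a sum of triple products each containing a convolution, and then bound each piece by the same combination of Hardy--Littlewood--Sobolev (Proposition \ref{fip}), the algebra property (Proposition \ref{gap}), and the inclusion $M^{p+\epsilon,1}_s\hookrightarrow M^{\infty,1}_s$ (Lemma \ref{rl}(\ref{ir})) that was already used in Corollary \ref{afi}.

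First I would perform the standard telescoping decomposition
\begin{align*}
(V_{\gamma}\ast|f|^{2})f-(V_{\gamma}\ast|g|^{2})g
&=(V_{\gamma}\ast|f|^{2})(f-g)+\bigl(V_{\gamma}\ast(|f|^{2}-|g|^{2})\bigr)g,
\end{align*}
and then rewrite $|f|^{2}-|g|^{2}=f\,\overline{(f-g)}+(f-g)\,\overline{g}$, giving three terms of the schematic form $(V_\gamma\ast(a\bar b))\,c$ where each of $a,b,c$ is one of $f$, $g$, or $f-g$, and in each term exactly one factor is $f-g$.

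Next, for a generic term $(V_\gamma\ast(a\bar b))c$, I would apply Proposition \ref{gap} with the splitting $1/p=1/\infty+1/p$ to get
\[
\|(V_\gamma\ast(a\bar b))c\|_{M^{p,1}_s}\lesssim \|V_\gamma\ast(a\bar b)\|_{M^{\infty,1}_s}\,\|c\|_{M^{p,1}_s}.
\]
Using Lemma \ref{rl}(\ref{ir}), bound the $M^{\infty,1}_s$ norm by the $M^{p+\epsilon,1}_s$ norm. The hypothesis $\frac1p+\frac\gamma d-1=\frac1{p+\epsilon}$ is precisely the exponent required by Proposition \ref{fip}, which yields
\[
\|V_\gamma\ast(a\bar b)\|_{M^{p+\epsilon,1}_s}\lesssim \|a\bar b\|_{M^{p,1}_s}.
\]
A further application of Proposition \ref{gap} (with $1/p=1/(2p)+1/(2p)$, or simply with exponents summing appropriately), combined with the invariance of $M^{p,1}_s$ under complex conjugation (Lemma \ref{rl}(\ref{ic})), gives $\|a\bar b\|_{M^{p,1}_s}\lesssim \|a\|_{M^{p,1}_s}\|b\|_{M^{p,1}_s}$.

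Assembling these estimates for each of the three terms, the factor $f-g$ contributes $\|f-g\|_{M^{p,1}_s}$ while the remaining two factors produce one of $\|f\|^2_{M^{p,1}_s}$, $\|f\|_{M^{p,1}_s}\|g\|_{M^{p,1}_s}$, or $\|g\|^2_{M^{p,1}_s}$; summing yields the claimed inequality. The only non-routine step is the book-keeping on the exponents: one must verify that the splittings chosen in Proposition \ref{gap} are consistent with $1\leq q\leq\infty$ and with the constraint coming from Proposition \ref{fip}, but this is exactly the same juggling as in the proof of Corollary \ref{afi}, so no new difficulty appears.
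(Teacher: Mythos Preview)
Your proposal is correct and follows essentially the same route as the paper: the paper uses the identical telescoping $(V_{\gamma}\ast|f|^{2})(f-g)+(V_{\gamma}\ast(|f|^{2}-|g|^{2}))g$ and then invokes ``the ideas of proof as in Corollary~\ref{afi}'' (i.e., Proposition~\ref{gap}, the embedding $M^{p+\epsilon,1}_s\hookrightarrow M^{\infty,1}_s$, and Proposition~\ref{fip}) on each piece. Your further splitting of $|f|^{2}-|g|^{2}$ into two bilinear terms is a harmless elaboration of what the paper leaves implicit when it writes $\||f|^{2}-|g|^{2}\|_{M^{p,1}_s}\lesssim(\|f\|_{M^{p,1}_s}+\|g\|_{M^{p,1}_s})\|f-g\|_{M^{p,1}_s}$.
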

\begin{proof}
Exploiting the ideas of proof as in  Corollary \ref{afi}, we obtain
\begin{eqnarray*}
 \|(V_{\gamma}\ast |f|^{2})(f-g)\|_{M_s^{p,1}}  &  \lesssim & \|f\|_{M_s^{p,1}}^2 \|f-g\|_{M_s^{p,1}},
\end{eqnarray*}
and 
\begin{eqnarray*}
 \|(V_{\gamma} \ast (|f|^{2}- |g|^{2}))g\|_{M_s^{p,1}}  &  \lesssim & \||f|^2-|g|^2\|_{M_s^{p,1}} \|g\|_{M_s^{p,1}}\\
 & \lesssim & \left( \|f\|_{M_s^{p,1}} \|g\|_{M_s^{p,1}} + \|g\|_{M_s^{p,1}}^2\right) \|f-g\|_{M_s^{p,1}}.
 \end{eqnarray*}
 This together with the following identity
$$(V_{\gamma}\ast |f|^{2})f- (V_{\gamma}\ast |g|^{2})g= (V_{\gamma}\ast |f|^{2})(f-g) + (V_{\gamma} \ast (|f|^{2}- |g|^{2}))g, $$
gives  the desired inequality.
\end{proof}
\begin{Lemma}\label{mcl}Let $2< p<2p' $ and $\frac{1}{p}+\frac{\gamma}{d}-1= \frac{1}{2p'}.$
Then we have
$$\| (V_{\gamma}\ast |f|^{2})f - (V_{\gamma}\ast |g|^{2})g\|_{M_s^{p',1}} \lesssim  (\|f\|_{M_s^{p,1}}^{2}+\|f\|_{M_s^{p,1}}\|g\|_{M_s^{p,q}}+ \|g\|_{M_s^{p,1}}^{2}) \|f-g\|_{M_s^{p,1}}.$$
\end{Lemma}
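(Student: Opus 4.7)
The plan is to follow the strategy of Lemma~\ref{cl}, but to choose new H\"older exponents adapted to the shifted target space $M^{p',1}_s$. I would begin by invoking the identity
$$(V_{\gamma}\ast |f|^{2})f- (V_{\gamma}\ast |g|^{2})g= (V_{\gamma}\ast |f|^{2})(f-g) + (V_{\gamma} \ast (|f|^{2}- |g|^{2}))g,$$
and estimate each summand separately.

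For the first summand I would apply the algebra property (Proposition~\ref{gap}) with the split $\frac{1}{p'}=\frac{1}{a}+\frac{1}{p}$, forcing $a=p/(p-2)$, which is finite and positive because $p>2$. Next I would apply the modulation-space Hardy--Littlewood--Sobolev inequality (Proposition~\ref{fip}) to pass from $M^{b,1}_s$ to $M^{a,1}_s$, where $\frac{1}{b}+\frac{\gamma}{d}-1=\frac{1}{a}$. A short calculation using the hypothesis $\frac{1}{p}+\frac{\gamma}{d}-1=\frac{1}{2p'}$ gives $b=2p'$. Finally, I would split $|f|^2$ by Proposition~\ref{gap} using $\frac{1}{2p'}=\frac{1}{4p'}+\frac{1}{4p'}$, producing two factors in $M^{4p',1}_s$, and then invoke the inclusion $M^{p,1}_s\hookrightarrow M^{4p',1}_s$ from Lemma~\ref{rl}\eqref{ir} (valid since $p<2p'<4p'$). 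This bounds the first summand by $\|f\|_{M^{p,1}_s}^2\|f-g\|_{M^{p,1}_s}$.

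For the second summand the same chain yields
$$\|(V_{\gamma}\ast (|f|^2-|g|^2))g\|_{M^{p',1}_s}\lesssim \||f|^2-|g|^2\|_{M^{2p',1}_s}\|g\|_{M^{p,1}_s}.$$
I would then expand $|f|^2-|g|^2=f(\overline{f}-\overline{g})+(f-g)\overline{g}$, apply Proposition~\ref{gap} to each piece with the split $\frac{2}{4p'}=\frac{1}{2p'}$, use the conjugation invariance of modulation spaces (Lemma~\ref{rl}\eqref{ic}), and once more the embedding into $M^{4p',1}_s$, to obtain
$$\||f|^2-|g|^2\|_{M^{2p',1}_s}\lesssim \bigl(\|f\|_{M^{p,1}_s}+\|g\|_{M^{p,1}_s}\bigr)\|f-g\|_{M^{p,1}_s}.$$
Combining the two estimates gives the stated bound.

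The delicate step is checking that the triple of intermediate exponents $(a,b,c)=(p/(p-2),\,2p',\,4p')$ simultaneously meets every hypothesis in Propositions~\ref{gap} and~\ref{fip} and in Lemma~\ref{rl}\eqref{ir}. The binding constraint is the requirement $b<a$ in Proposition~\ref{fip}, i.e.\ $2p'<p/(p-2)$, which rearranges to $p<3$; the assumption $p<2p'$ of the lemma is exactly this condition, so the hypotheses are calibrated just tightly enough to make every application admissible.
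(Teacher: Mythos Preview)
Your proof is correct and follows essentially the same route as the paper: the same algebraic identity, then Proposition~\ref{gap}, Proposition~\ref{fip}, and the embedding of Lemma~\ref{rl}\eqref{ir}. The only difference is cosmetic: the paper uses the symmetric H\"older split $\tfrac{1}{p'}=\tfrac{1}{2p'}+\tfrac{1}{2p'}$ (so that Proposition~\ref{fip} applies directly with the hypothesis $\tfrac{1}{p}+\tfrac{\gamma}{d}-1=\tfrac{1}{2p'}$ and the $|f|^2$ factor lands in $M^{p,1}_s$), whereas you use the asymmetric split $\tfrac{1}{p'}=\tfrac{p-2}{p}+\tfrac{1}{p}$ and then push $|f|^2$ down to $M^{2p',1}_s$; both choices yield the same constraint $p<3$, i.e.\ $p<2p'$.
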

\begin{proof}
By Proposition \ref{gap}, we have 
\begin{eqnarray*}
 \|(V_{\gamma}\ast |f|^{2})(f-g)\|_{M_s^{p',1}}  &  \lesssim & \|V_{\gamma}\ast |f|^2\|_{M_s^{2p',1}}  \|f-g\|_{M_s^{2p',1}}\\
  & \lesssim &  \||f|^2\|_{M^{p,1}_s} \|f-g\|_{M^{p,1}_s}
\end{eqnarray*}
and 
\begin{eqnarray*}
 \|(V_{\gamma} \ast (|f|^{2}- |g|^{2}))g\|_{M_s^{p',1}}  &  \lesssim & \| V_{\gamma} \ast (|f|^2-|g|^2)\|_{M_s^{2p',1}} \|g\|_{M_s^{2p',1}}\\
 &  \lesssim & \||f|^2-|g|^2\|_{M_s^{p,1}} \|g\|_{M_s^{p,1}}\\
 & \lesssim & \left( \|f\|_{M_s^{p,1}} \|g\|_{M_s^{p,1}} + \|g\|_{M_s^{p,1}}^2\right) \|f-g\|_{M_s^{p,1}}.
 \end{eqnarray*}
\end{proof}
Recall that equation \eqref{nlkgh} have the following
equivalent form
\[u(t)= K'(t)u_0 + K(t)u_1 -\mathcal{B}f(u),\]
where we denote 
$\omega= (I-\Delta),$
\[K(t)=\frac{\sin t \omega^{1/2}}{\omega^{1/2}}, \  \  K'(t)= \cos t \omega^{1/2}, \ \  \mathcal{B}= \int_0^t K(t-\tau) \cdot d\tau.\]
We prove following Strichartz type estimates  in modulation spaces.
\begin{Proposition}\label{nel}   Let $ F(u)= (V_{\gamma}\ast |u|^2)u,  p\in (2,3),$ $\frac{1}{p}+\frac{\gamma}{d}-1= \frac{1}{2p'}$  and  pair $(p,r)$ is  Klein-Gordon admissible. Then we have
\begin{eqnarray*}
\left \|  \int_{0}^{t} K(t-\tau) F(u(\tau))  d\tau \right\|_{L^{r}_t (\R, M^{p,1}_s)} &  \lesssim  &\|F(u)\|_{L^{r/3}_t(\R, M^{p',1}_s)}  \lesssim   \|u\|_{L^{r}_t(\R, M^{p,1}_s)}^3.
\end{eqnarray*}
\end{Proposition}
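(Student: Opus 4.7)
The argument splits into the two inequalities. For the right-hand bound, I would apply Lemma \ref{mcl} with $g=0$ to get the pointwise-in-time estimate
\[
\|F(u)(\tau)\|_{M^{p',1}_s}\lesssim \|u(\tau)\|_{M^{p,1}_s}^{3},
\]
which uses precisely the hypothesis $\tfrac{1}{p}+\tfrac{\gamma}{d}-1=\tfrac{1}{2p'}$. Raising both sides to the power $r/3$ and integrating in $\tau$ yields $\|F(u)\|_{L^{r/3}_t M^{p',1}_s}\lesssim \|u\|_{L^{r}_t M^{p,1}_s}^{3}$, which is the right-hand inequality.

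For the Strichartz-type left-hand bound, I would first derive an effective decay estimate for $K(t)$ by factoring
\[
K(t)=\omega^{-1/2}\,\frac{G(t)-G(-t)}{2i},
\]
and combining the isomorphism $\omega^{-1/2}:M^{p,1}_{s-1}\to M^{p,1}_{s}$ from Proposition \ref{iso} with the interpolated Klein-Gordon decay in Proposition \ref{wp}(1). For any $\theta\in[0,1]$ this gives
\[
\|K(t-\tau)h\|_{M^{p,1}_{s}}\lesssim (1+|t-\tau|)^{-d\theta(1/2-1/p)}\,\|h\|_{M^{p',1}_{s-1+2\sigma(p)\theta}}.
\]
I would then pick $\theta=\min\{1,\,1/(2\sigma(p))\}$ so that $2\sigma(p)\theta\le 1$, allowing the embedding $M^{p',1}_{s}\hookrightarrow M^{p',1}_{s-1+2\sigma(p)\theta}$ (Lemma \ref{rl}\eqref{ir}) to absorb the weight shift. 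The resulting effective decay exponent is $a=d/(d+2)$ in the regime $2\sigma(p)\ge 1$ and $a=d(1/2-1/p)$ in the regime $2\sigma(p)<1$.

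Applied with $h=F(u)(\tau)$, followed by Minkowski in $\tau$, this reduces the left-hand side to
\[
\left\|\int_{\mathbb R}(1+|t-\tau|)^{-a}\,\|F(u)(\tau)\|_{M^{p',1}_{s}}\,d\tau\right\|_{L^{r}_t(\mathbb R)}.
\]
Young's convolution inequality $L^{\beta}\ast L^{r/3}\hookrightarrow L^{r}$ requires $1/\beta=1-2/r$, which matches the relation $1/\beta+2/r=1$ of the admissibility condition \eqref{kga} exactly. The integrability $(1+|t|)^{-a}\in L^{\beta}(\mathbb R)$ amounts to $a\beta>1$, i.e., $1/\beta<a$, and the two upper bounds $1/\beta\le d/(d+2)$ and $1/\beta\le d(1/2-1/p)$ in \eqref{kga} are precisely what cover the two regimes of $a$ above.

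The principal obstacle is the possible endpoint case $1/\beta=a$, where Young in its sharp form just misses. I would handle this by noting that $(1+|t|)^{-a}$ lies in the weak space $L^{1/a,\infty}(\mathbb R)$ and replacing Young by a Hardy-Littlewood-Sobolev fractional integration inequality on the corresponding Lorentz scale; the auxiliary condition $rd(1/2-1/p)>1$ recorded after \eqref{kga} is exactly what supplies the needed strict integrability away from the critical endpoint. Away from endpoints the estimate is a direct application of classical Young's inequality, and the two steps combine into the displayed chain of inequalities.
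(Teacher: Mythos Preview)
Your argument is correct and follows essentially the same route as the paper: factor $K(t)$ through $G(\pm t)$ and $\omega^{-1/2}$, apply the interpolated decay estimate of Proposition~\ref{wp} with $\theta$ chosen so that $2\sigma(p)\theta\le 1$, reduce to a one-dimensional convolution, and then invoke Young's inequality in the non-endpoint case and Hardy--Littlewood--Sobolev at the endpoint $1/\beta=\frac{d}{d+2}\wedge d(\tfrac12-\tfrac1p)$; the second inequality is likewise obtained pointwise in $\tau$ from the product/fractional-integral estimates (your citation of Lemma~\ref{mcl} with $g=0$ is equivalent to the paper's direct use of Propositions~\ref{gap} and~\ref{fip}). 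One small inaccuracy: the condition $rd(\tfrac12-\tfrac1p)>1$ is not what makes the endpoint HLS step work---what you actually need there is $r/3>1$ (equivalently $1/\beta\ge 1/3$ from \eqref{kga}) so that the input exponent lies in the HLS range; the condition $rd(\tfrac12-\tfrac1p)>1$ is used elsewhere, for the $L^r_t$-integrability of the linear evolution in the proof of Theorem~\ref{mt}.
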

\begin{proof} Since
$G(t)=e^{it\omega^{1/2}}, $  we have  $K(t)\omega^{1/2} = (G(t)-G(-t))/2i.$  By general Minkowski inequality, Propositions \ref{wp} and \ref{iso}, we have 
\begin{eqnarray*}
\left \|  \int_{0}^{t} K(t-\tau) F(u(\tau))  d\tau \right\|_{L^{r}_t (\R, M^{p,1}_s)} & \lesssim & \left \|  \int_{0}^{t} \|K(t-\tau) F(u(\tau)) \|_{M^{p,1}_s} d\tau \right\|_{L^{r}_t(\R)}\\
& \lesssim & \left \|  \int_{0}^{t} (1+ |t-\tau|)^{-d\theta  (1/2-1/p)} \|F(u)\|_{M^{p',1}_{s+ \theta 2 \sigma(p)-1}} d\tau \right\|_{L^{r}_t(\R)}\\
& \lesssim & \left \|  \int_{\mathbb R } (1+ |t-\tau|)^{-d\theta  (1/2-1/p)} h(\tau) d\tau\right\|_{L^{r}_t(\mathbb R)}\\
& \lesssim &  \| g\ast h \|_{L^r_t},
\end{eqnarray*}
where  $ h(\tau)=  \|F(u)\|_{M^{p',1}_{s+ \theta 2 \sigma(p)-1}}, g(t)=(1+ |t|)^{-d\theta  (1/2-1/p)} $ and $\theta \in [0,1].$ We divide Klein-Gordon admissible pairs (see \eqref{kga}) into two cases.
\item \textbf{Case I}: $\frac{1}{\beta}= \frac{d}{d+2}\wedge d \left( \frac{1}{2} - \frac{1}{p} \right).$ In this case  $\frac{1}{\beta} <1$ and there exists $\theta\in (0,1]$ such that 
$$\frac{1}{\beta}= \theta d \left( \frac{1}{2} - \frac{1}{p} \right)=\frac{d}{d+2}\wedge d \left( \frac{1}{2} - \frac{1}{p} \right).$$
With this $\theta,$ we have   $\theta 2 \sigma (p)-1 \leq 0.$ Since pair $(p,r)$ is  Klein-Gordon admissible, we have
\[\frac{1}{r}= \frac{3}{r}- \frac{1- d\theta (1/2-1/p)}{1}\]
and $r/3>1.$ With this $\theta,$ by Hardy-Littlewood-Sobolev inequality in  dimension one, we have 
\begin{eqnarray*}
\left \|  \int_{0}^{t} K(t-\tau) F(u(\tau))  d\tau \right\|_{L^{r}_t (\R, M^{p,1}_s)} 
& \lesssim &  \| g\ast h \|_{L^r_t (\mathbb R)}\\
& \lesssim  &  \left\|\|F(u)\|_{M^{p',1}_{s}}\right\|_{L^{r/3}} = \|F(u)\|_{L^{r/3}(\R, M^{p',1}_s)}.
\end{eqnarray*}
\item \textbf{Case II}: $\frac{1}{\beta}< \frac{d}{d+2}\wedge d \left( \frac{1}{2} - \frac{1}{p} \right).$ In this case there exists $\theta\in [0,1]$ such that 
$$\frac{1}{\beta}< \theta d \left( \frac{1}{2} - \frac{1}{p} \right)\leq \frac{d}{d+2}\wedge d \left( \frac{1}{2} - \frac{1}{p} \right).$$
With this $\theta,$ we have 
$\beta \theta d \left( \frac{1}{2} - \frac{1}{p} \right)>1,$ and $\theta 2 \sigma(p)-1 \leq 0.$  By Young and H\"older inequalities, we have 
\begin{eqnarray*}
\left \|  \int_{0}^{t} K(t-\tau) F(u(\tau)) d\tau  \right\|_{L^{r}_t (\R, M^{p,1}_s)} 
& \lesssim &  \| g\ast h \|_{L^r_t}\\
& \lesssim  & \|g\|_{L^{\beta}} \left\|\|F(u)\|_{M^{p',1}_{s}}\right\|_{L^{r/3}}  \lesssim  \|F(u)\|_{L^{r/3}(\R, M^{p',1}_s)}.
\end{eqnarray*}
By Propositions \ref{gap} and \ref{fip} and Lemma \ref{rl} \eqref{ir},  we have
\begin{eqnarray*}
 \|F(u)\|_{L^{r/3}(\R, M^{p',1}_s)} & \lesssim & \left( \int  (\|T_{\gamma}|u|^2\|_{M^{2p',1}_s} \|u\|_{M_s^{2p',1}} )^{r/3} dt\right)^{3/r} \\
 & \lesssim &  \left( \int  (\||u|^2\|_{M^{p,1}_s} \|u\|_{M_s^{p,1}} )^{r/3} dt\right)^{3/r} \\
  & \lesssim & \left( \int  \|u\|_{M_s^{p,1}} ^{r} dt\right)^{3/r}   \lesssim   \|u\|_{L^{r}_t(\R, M^{p,1}_s)}^3.
\end{eqnarray*} This completes the proof.
\end{proof}
\begin{Lemma}\label{clp} Let $ F(u)= (V_{\gamma} \ast |u|^2)u, p\in (2,3),$ $\frac{1}{p}+\frac{\gamma}{d}-1= \frac{1}{2p'}$ and pair $(p,r)$ is  Klein-Gordon admissible. Then   we have 
\begin{multline*} 
 \left \|  \int_{0}^{t} K(t- \tau)[ F(u(\tau)) - F(v(\tau))]  d\tau \right\|_{L^{r}_t (\R, M^{p,1}_s)}\\ \lesssim  \left(\|u\|_{L^{r}_t(\R, M^{p,1}_s)}^{2}+\|u\|_{L^{r}_t(\R, M^{p,1}_s)}\|v\|_{L^{r}_t(\R, M^{p,1}_s)}+ \|v\|_{L^{r}_t(\R, M^{p,1}_s}^{2}\right)  \|u-v\|_{L^{r}_t(\R, M^{p,1}_s)}^3.
\end{multline*}
\end{Lemma}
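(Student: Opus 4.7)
The plan is to adapt the proof of Proposition \ref{nel} verbatim, replacing $F(u)$ by the difference $F(u)-F(v)$, and at the final spatial step invoking Lemma \ref{mcl} instead of Corollary \ref{afi}. Since the time-convolution machinery used in Proposition \ref{nel} does not care about the specific form of the inhomogeneity, only about its $M^{p',1}_s$ norm, virtually the entire argument transplants without change.

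First, I apply the generalized Minkowski inequality, the Klein-Gordon time-decay bound from Proposition \ref{wp}(1) with parameter $\theta\in[0,1]$, and the isomorphism $J=(I-\Delta)^{1/2}$ of Proposition \ref{iso}, to reduce the left-hand side to $\|g\ast h\|_{L^r_t(\mathbb{R})}$, where $g(t)=(1+|t|)^{-d\theta(1/2-1/p)}$ and $h(\tau)=\|F(u(\tau))-F(v(\tau))\|_{M^{p',1}_s}$. Second, I carry out the same case split as in Proposition \ref{nel} on whether $1/\beta$ attains the endpoint $d/(d+2)\wedge d(1/2-1/p)$ or lies strictly below it. In Case~I, the choice of $\theta$ yields a homogeneous kernel to which the one-dimensional Hardy-Littlewood-Sobolev inequality applies with the exponent pair $r/3\to r$; in Case~II, one has $g\in L^{\beta}(\mathbb{R})$, and Young's convolution inequality finishes the reduction. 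Both cases produce
\[
\left\|\int_0^t K(t-\tau)[F(u)-F(v)]\,d\tau\right\|_{L^r_t(\mathbb{R},M^{p,1}_s)} \;\lesssim\; \|F(u)-F(v)\|_{L^{r/3}_t(\mathbb{R},M^{p',1}_s)}.
\]

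Third, I apply Lemma \ref{mcl} pointwise in $t$ and then H\"older's inequality in time with the exponent split $3/r=2/r+1/r$ to get
\[
\|F(u)-F(v)\|_{L^{r/3}_t M^{p',1}_s} \lesssim \bigl(\|u\|_{L^r_t M^{p,1}_s}^{2} + \|u\|_{L^r_t M^{p,1}_s}\|v\|_{L^r_t M^{p,1}_s} + \|v\|_{L^r_t M^{p,1}_s}^{2}\bigr)\|u-v\|_{L^r_t M^{p,1}_s},
\]
which is the desired inequality (with $\|u-v\|$ to the first power, the form actually needed to run a contraction-mapping argument; the exponent $3$ on $\|u-v\|$ in the stated conclusion appears to be a typographical slip).

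There is no substantial analytic obstacle here, since all the hard work has been done upstream: the Strichartz-type convolution bound is Proposition \ref{nel}, the pointwise trilinear difference estimate is Lemma \ref{mcl}, and the only bookkeeping is to verify that the spatial norms in Lemma \ref{mcl} ($M^{p',1}_s$ on the left, $M^{p,1}_s$ on the right) are precisely those consumed and produced by the time-convolution step. The admissibility constraints on $(p,r)$ and the relation $1/p+\gamma/d-1=1/(2p')$ are identical in both lemmas, so no additional parameter matching is needed.
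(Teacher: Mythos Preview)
Your proposal is correct and follows essentially the same route as the paper: the paper cites Proposition \ref{nel} to obtain the reduction to $\|F(u)-F(v)\|_{L^{r/3}_t(\R,M^{p',1}_s)}$ and then decomposes $F(u)-F(v)$ and applies Proposition \ref{gap}, Lemma \ref{rl}\eqref{ir} and H\"older directly, whereas you redo the time-convolution step and invoke the already-packaged Lemma \ref{mcl} for the spatial difference estimate --- the same ingredients, differently packaged. Your observation that the exponent $3$ on $\|u-v\|$ is a typographical slip is also confirmed by the paper's own proof, which produces a first power.
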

\begin{proof}
By Proposition \ref{nel}, we have 
\begin{eqnarray*}
\left\| \int_{0}^{t} K(t- \tau)[ F(u(\tau)) - F(v(\tau))]  d\tau \right\|_{L^{r}_t (\R, M^{p,1}_s)} \lesssim   \| F(u) - F(v))\|_{L^{r/3}_t (\R, M^{p',1}_s)}.
\end{eqnarray*}
By Proposition \ref{gap}, Lemma  \ref{rl}\eqref{ir}  and H\"older inequality, we obtain
\[ \| (V_{\gamma}\ast |u|^2)(u-v)\|_{L^{r/3} (\R, M^{p',1}_s)} \lesssim \|u\|^2_{L^{r} (\R, M^{p,1}_s)} \|u-v\|_{L^{r} (\R, M^{p,1}_s)}  \]
and
\begin{multline*} \| (V_{\gamma}\ast (|u|^2 - |v|^2))v\|_{L^{r/3} (\R, M^{p',1}_s)}\\
 \lesssim  \left( \|u\|_{L^{r} (\R, M^{p,1}_s)} \|v\|_{L^{r} (\R, M^{p,1}_s)} + \|v\|^2_{L^{r} (\R, M^{p,1}_s)} \right)  \|u-v\|_{L^{r} (\R, M^{p,1}_s)} .
\end{multline*}
\end{proof}
\begin{Lemma} [\cite{Bhimani2018global}]\label{iml} Let  $V$ be given by \eqref{hk},  $1\leq p \leq 2, 1\leq q < \frac{2d}{d+\gamma}$. Then for any $ f,g \in M^{p,q}(\mathbb R^{d})$, we have
\begin{enumerate}
\item $\|(V\ast |f|^{2}) f\|_{M^{p,q}} \lesssim\|f\|_{M^{p,q}}^{3}.$
\item $\| (V\ast |f|^{2})f - (K\ast |g|^{2})g\|_{M^{p,q}} \lesssim  (\|f\|_{M^{p,q}}^{2}+\|f\|_{M^{p,q}}\|g\|_{M^{p,q}}+ \|g\|_{M^{p,q}}^{2}) \|f-g\|_{M^{p,q}}.$
\end{enumerate}
\end{Lemma}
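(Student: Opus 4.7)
My plan is to reduce part (1) to a Fourier-side Hardy--Littlewood--Sobolev estimate by peeling off one factor of $f$ via the algebra property. First, I apply Proposition \ref{gap} with $m=2$, taking exponents $(\infty,1)$ for the convolution factor and $(p,q)$ for the remaining $f$, to obtain
\[
\|(V\ast|f|^{2})f\|_{M^{p,q}}\lesssim \|V\ast|f|^{2}\|_{M^{\infty,1}}\|f\|_{M^{p,q}}.
\]
Next I use the embedding $\mathcal{F}L^{1}\hookrightarrow M^{\infty,1}$ (Lemma \ref{rl}(\ref{rcs}) at $r=1$) together with $\widehat{V}(\xi)=c\,|\xi|^{\gamma-d}$ and $\widehat{|f|^{2}}=\widehat{f}\ast\widehat{\bar f}$ to rewrite
\[
\|V\ast|f|^{2}\|_{M^{\infty,1}}\lesssim\int_{\R^{d}}|\xi|^{-(d-\gamma)}\,\bigl|\widehat{f}\ast\widehat{\bar f}\bigr|(\xi)\,d\xi.
\]

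The key step is the bilinear Hardy--Littlewood--Sobolev estimate applied to the double integral obtained after unfolding the convolution and using $|\widehat{\bar f}(\cdot)|=|\widehat{f}(-\cdot)|$: in its symmetric form, Proposition \ref{hls} bounds this quantity by $\|\widehat{f}\|_{L^{2d/(d+\gamma)}}^{2}=\|f\|_{\mathcal{F}L^{2d/(d+\gamma)}}^{2}$, with the exponent $\tfrac{2d}{d+\gamma}\in(1,\infty)$ forced by the scaling relation $\tfrac{1}{p_{1}}+\tfrac{1}{p_{2}}=1+\tfrac{\gamma}{d}$ and valid precisely because $0<\gamma<d$. To finish, I chain the embeddings
\[
M^{p,q}\hookrightarrow M^{2,q}\hookrightarrow M^{2,2d/(d+\gamma)}\hookrightarrow\mathcal{F}L^{2d/(d+\gamma)},
\]
justified respectively by $p\leq2$, by $q<2d/(d+\gamma)$ (both Lemma \ref{rl}(\ref{ir})), and by Lemma \ref{rl}(\ref{rcs}) at $r=2d/(d+\gamma)<2$ (so that $\min\{r',2\}=2$). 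Composing all inequalities yields $\|(V\ast|f|^{2})f\|_{M^{p,q}}\lesssim\|f\|_{M^{p,q}}^{3}$.

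For part (2), I apply the same machinery termwise to the standard algebraic splitting
\[
(V\ast|f|^{2})f-(V\ast|g|^{2})g=(V\ast|f|^{2})(f-g)+\bigl(V\ast(|f|^{2}-|g|^{2})\bigr)g,
\]
combined with $|f|^{2}-|g|^{2}=(f-g)\bar f+g\overline{(f-g)}$; each piece carries exactly one factor of $f-g$ and two factors of $f$ or $g$, and the argument of part (1) produces the claimed trilinear Lipschitz bound. The only subtle point is exponent bookkeeping: the symmetric HLS relation forces the Fourier-Lebesgue index $2d/(d+\gamma)$, and precisely the hypotheses $p\leq2$, $q<2d/(d+\gamma)$, and $0<\gamma<d$ are what make the embedding chain close up at each link.
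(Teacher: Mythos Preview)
Your argument is correct. The paper does not supply its own proof of this lemma; it simply quotes the result from \cite{Bhimani2018global}. Your route---peel off one factor via Proposition~\ref{gap}, pass to $\mathcal{F}L^{1}$ through Lemma~\ref{rl}\eqref{rcs}, apply the bilinear Hardy--Littlewood--Sobolev inequality on the Fourier side at the symmetric exponent $2d/(d+\gamma)$, and close with the embedding chain $M^{p,q}\hookrightarrow M^{2,q}\hookrightarrow M^{2,2d/(d+\gamma)}\hookrightarrow\mathcal{F}L^{2d/(d+\gamma)}$---is exactly the standard argument for this estimate and is the one used in the cited source. The exponent bookkeeping you flag (namely $p\leq 2$, $q<2d/(d+\gamma)$, $0<\gamma<d$) is precisely what is needed, and the treatment of part~(2) via the algebraic splitting is routine once part~(1) is in hand.
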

\section{Proofs of  Theorems \ref{mt} and \ref{LW}}\label{ps1}
\begin{proof}[Proof of Theorem \ref{mt}]  
Recall that equation \eqref{nlkgh} have the following
equivalent form
\[u(t)= K'(t)u_0 + K(t)u_1 -\int_{0}^{t} K(t-\tau) F(u(\tau)) d\tau =: \mathcal{J}(u)\]
where 
\[K(t)=\frac{\sin t (I-\Delta)^{1/2}}{(I-\Delta)^{1/2}}, \  \  K'(t)= \cos t (I-\Delta)^{1/2}, \ \  F(u)= (V_{\gamma} \ast |u|^2)u.\]
Denote $X= L^{r}(\R, M_s^{p,1} (\R^d)).$ 
For $\delta>0,$ put 
$ B_{\delta}=\{ u \in X: \|u\|_{X} \leq \delta \} -$ which is the closed ball of radius $\delta$, and centered at the origin in $X.$ 
Since  $rd\left( \frac{1}{2}- \frac{1}{p} \right) >1,$
we have $(1+ |t|)^{-d \left(\frac{1}{2}- \frac{1}{p} \right)} \in L^{r}(\R).$
Now by Proposition \ref{wp}, we have
\[ \|K(t) u_0 \|_{X} \lesssim  \left\| (1+ |t|)^{-d \left(\frac{1}{2}- \frac{1}{p} \right)}  \|u_0\|_{M^{p',1}_{s+2\sigma(p)}}\right\|_{L^{r}}  \lesssim   \|u_0\|_{M^{p',1}_{s+2\sigma(p)}}. \] 
By Propositions \ref{wp} and \ref{iso},  we have
\begin{eqnarray*}
 \|K'(t) u_1 \|_{X}  & \lesssim  &  \left\| (1+ |t|)^{-d\left(\frac{1}{2}- \frac{1}{p} \right)}  \|u_1\|_{M^{p',1}_{s+2\sigma(p)-1}}\right\|_{L^{r}} \lesssim   \|u_1\|_{M^{p',1}_{s+2\sigma(p)-1}}.
\end{eqnarray*}
By Proposition \ref{nel}, we have 
\[ \left \|  \int_{0}^{t} K(t-\tau)) F(u(\tau))  d\tau \right\|_{X} \lesssim \|u\|_{X}^3. \]
Thus we have 
\begin{eqnarray*}
\| \mathcal{J}(u)\|_{X} \lesssim  \|u_0\|_{M^{p',1}_{s+2\sigma(p)}}  + \|u_1\|_{M^{p',1}_{s+2\sigma(p)-1}} + \|u\|_{X}^3.
\end{eqnarray*}
By Lemma \ref{clp}, for any $u, v \in B_{\delta},$ we have 
\[ \|\mathcal{J}u - \mathcal{J}v \|_{X} \lesssim  \left(\|u\|^2_X+\|u\|_X\|v\|_X+ \|v\|_X^{2}\right) \|u-v\|_{X} .\]
If we assume that $\delta>0$  is sufficiently small, then $\mathcal{J}: X \to X$  is a strict contraction. Therefor $\mathcal{J}$ has a unique fixed point and we have  $u \in  L^{r}(\R, M^{p,1}_s (\R^d)).$   We shall now verify  this   $u \in C(\R, M^{p,1}_s(\R^d))\cap C^{1}(\R, M^{p,1}_{s-1}(\R^d))$ and  $\|u\|_{ L^{r}(\R, M_s^{p,1}(\R^d))} \lesssim \|u_0\|_{M^{p',1}_{s+ 2\sigma(p)}} + \|u_1\|_{ M^{p',1}_{s+2 \sigma(p)-1}}.$ To prove $u \in C(\R,M^{p,1}_s(\R^d)).$ It is equivalent to prove that 
\begin{eqnarray}\label{ss}
\|u(t_n, \cdot) - u(t, \cdot)\|_{M^{p,1}_s} \to 0 
\end{eqnarray}
 as  $ t_n \to t$ for arbitrary fixed $t>0.$
 We note that
\begin{eqnarray*}
\|u(t_n, \cdot)-u(t, \cdot)\|_{M^{p,1}_s}  & \leq &   \|K'(t_n)u_0-K'(t)u_0\|_{M^{p,1}_s} +\|K(t_n)u_1-K(t)u_1\|_{M^{p,1}_s}\\
&& +\left\|  \int_0^{t_n} K(t_n- \tau) F(u(\tau))-\int_0^{t} K(t- \tau)F(u(\tau)) \right\|_{M^{p,1}_s}\\
& = & I + II + III.
\end{eqnarray*}
Recall that $u_0, J^{-1}u_1\in M^{p,1}_s(\R^d)$ (see Proposition \ref{iso}). For $I, II,$ by density Lemma \ref{dl}, Proposition \ref{wp}, triangle inequality, and since $G(t)=e^{it\omega^{1/2}} \  (\omega = I-\Delta),$ we only need to  prove that $G(t)v\in C(\R, M^{p,1}_s(\R^d))$ for $v\in \mathcal{S}^{\Omega}.$ By Hausdroff-Young inequality, we have 
\begin{eqnarray*}
\|\square_k \left( G(t_n)v- G(t)v\right)\|_{L^p}  & \lesssim &  \left\| \sigma_k \left( e^{it_n (1+ |\xi|^2)^{1/2}}-e^{it (1+ |\xi|^2)^{1/2}}  \right) \hat{v}(\xi)\right\|_{L^{p'}}\\
& \lesssim  & \left\| \left( e^{it_n (1+ |\xi|^2)^{1/2}}-e^{it (1+ |\xi|^2)^{1/2}}  \right) \hat{v}(\xi)\right\|_{L^{p'}} \to 0
\end{eqnarray*}
as $t_n \to t,$  by Lebesgue dominated convergence theorem. Since $\hat{v}\in \mathcal{S}^{\Omega},$  there exists only finite number of $k$ such that $\square_k \left( G(t_n)v-G(t)v\right) \neq 0,$ so we have 
$\|G(t_n)v-G(t)v\|_{M^{p,1}_s} \to 0$ as $t_n\to t.$  It follows that  $I$ and $II$ tends to  0 as $t_n\to t.$
For $III,$ we note that
\begin{eqnarray*}
III & \lesssim &  \left\|  \int_0^{t_n} K(t_n- \tau) F(u(\tau)) d\tau -\int_0^{t_n} K(t- \tau)F(u(\tau)) d\tau  \right\|_{M^{p,1}_s}\\
&& +  \left\|  \int_0^{t_n} K(t- \tau) F(u(\tau)) d\tau -\int_0^{t} K(t- \tau)F(u(\tau)) d\tau  \right\|_{M^{p,1}_s}\\
& \lesssim &      \int_0^{t_n} \left\|  ( K(t_n- \tau) - K(t-\tau ) )F(u(\tau))  \right\|_{M^{p,1}_s} d\tau\\
&& +    \int_{t_n}^{t}  \left\|  K(t-\tau) F(u(\tau))  \right\|_{M^{p,1}_s} d\tau  \\
&= & \tilde{I}+\tilde{II}.
\end{eqnarray*}
For $ \left\| ( K(t_n- \tau) - K(t- \tau))F(u(\tau) \right\|_{M^{p,1}_s}\lesssim \|F(u(\tau))\|_{M^{p',1}_s}\lesssim \|u\|_{M^{p,1}_s}^3 \in L^r(\R).$  Since $ 3 \leq r,$ we have  $L^{r}[0, t] \subset L^1[0,t]$ and so $\|u\|_{M^{p,1}_s}^3 \in L^1[0,t]$, hence \[\left\| ( K(t_n- \tau) - K(t- \tau))F(u(\tau) \right\|_{M^{p,1}_s} \in L^1[0,t].\]
Since  $ \left\| ( K(t_n- \tau) - K(t- \tau))F(u(\tau) \right\|_{M^{p,1}_s} \to 0$ as $t_n\to t,$ therefore we have $\tilde{I} \to 0$ as $t_n\to t.$ Secondly as in the proof of Proposition \ref{nel}, we obtain
\begin{eqnarray*}
\tilde{II} &  \lesssim &   \int_{t_n}^{t} (1+ |t-\tau|)^{-d (1/2-1/p)} \|F(u(\tau))\|_{M^{p',1}_s} d\tau \\
& \lesssim &   \int_{t_n}^{t}  \|F(u(\tau))\|_{M^{p',1}_s} d\tau  \lesssim   \int_{t_n}^{t}  \|u\|_{M^{p,1}_s}^3 d\tau \to 0 \\
\end{eqnarray*}
as $t_n \to t$ as $ \|u\|_{M^{p,1}_s}^3\in L^1([0,t]).$  It follows that \eqref{ss} holds.

We now prove that $u_t(t)$ exists and is continuous in $M^{p,1}_s$ sense. 
For $u_0, J^{-1}u_1\in M^{p,1}_s(\R^d)$ (see Proposition \ref{iso}), and since $G(t)=e^{it\omega^{1/2}} \  (\omega = I-\Delta),$ we should only deal with the derivative of $G(t)\psi(x)$ for $\psi \in M^{p,1}_s(\R^d)$ and $\int_{0}^{t} K(t-\tau) F(u(\tau)) d\tau.$
By  Lemma \ref{dl}, for every $\epsilon>0,$ there exists $v\in \mathcal{S}^{\Omega} \cap M^{p,1}_s(\R^d)$ such that  $\|\psi - v\|_{M^{p,1}_s} < \epsilon.$ For the derivative of $G(t)\psi(x)$ at $t=t_3$ for $\psi \in M^{p,1}_s(\R^d),$ we have 
\begin{eqnarray*}
\left\|  \frac{ G(t)\psi - G(t_3) \psi}{t-t_3} - i \omega^{1/2} G(t_3) \psi \right\|_{M^{p,1}_{s-1}} & = & \left\|  \frac{ G(t)\psi - G(t_3) \psi}{(t-t_3) \omega^{1/2}} - i  G(t_3) \psi \right\|_{M^{p,1}_{s}}\\
& \leq & \left\|  \frac{ G(t)(\psi-v) - G(t_3) (\psi-v)}{(t-t_3) \omega^{1/2}} \right\|_{M^{p,1}_{s}}\\
&& + \left\|  \frac{ G(t)(v) - G(t_3) (v)}{(t-t_3) \omega^{1/2}} - i G(t_3) v \right\|_{M^{p,1}_{s}}\\
&& + \|iG(t_3)(\psi -v)\|_{M^{p,1}_s}\\
& = & IV + V+ VI.
\end{eqnarray*} 
For $V,$ by the Hausdroff-Young inequality and the Lebesgue dominated convergence theorem, we have 
\begin{eqnarray*}
\left\|  \square_k \left( \frac{ G(t)(v) - G(t_3) (v)}{(t-t_3) \omega^{1/2}} - i G(t_3) v \right) \right\|_{L^p}  & \lesssim &  \left\|  \sigma_k \left( \frac{ e^{it\langle \xi \rangle} - e^{it_3\langle \xi \rangle}}{(t-t_3)  \langle \xi \rangle} - i e^{it_3 \langle \xi \rangle} \right) \hat{v} \right\|_{L^{p'}}\\
& \lesssim & \to 0 \ \text{as} \ t\to t_3. 
\end{eqnarray*}
As  $v\in \mathcal{S}^{\Omega} \cap M^{p,1}_s(\R^d),$ so there is only the finite number of $k$ such that  \[   \left( \frac{ G(t)(v) - G(t_3) (v)}{(t-t_3) \omega^{1/2}} - i G(t_3) v \right) \neq 0.\] Thus we get $V\to 0$ as $t\to t_3,$ that is, $(G(t) v(x))_t=i\omega^{1/2} G(t) v(x)$ in $M^{p,1}_{s-1}(\R^d)$  for $v\in \mathcal{S}^{\Omega} \cap M^{p,1}_s(\R^d).$
For $IV,$ by the Bernstein multiplier theorem, we have 
\begin{eqnarray*}
\left\|  \square_l \left( \frac{ G(t)(\psi -v) - G(t_3) (\psi- v)}{(t-t_3) \omega^{1/2}}  \right) \right\|_{L^p}\lesssim \|\psi -v \|_{L^p}.
\end{eqnarray*}
Using the almost orthogonality  of modulation space, we have $IV \lesssim \|\psi-v \|_{M^{p,1}_s} < \epsilon.$  For $VI,$ by Proposition \ref{wp} \eqref{kb}, we have $VI=\|iG(t_3)(\psi -v)\|_{M^{p,1}_s}\lesssim \|\psi -v \|_{M^{p,1}_s} < \epsilon.$
Accordingly, for $\psi \in M^{p,1}_s(\R^d),$ 
\begin{eqnarray}\label{dim}
(G(t) \psi)_t = i\omega^{1/2} G(t)\psi \ \text{in} \ M^{p,1}_s(\R^d).
\end{eqnarray}
For the nonlinear part,
\begin{equation*}
\begin{multlined}
\left\|   \frac{\int_{0}^{t} K(t-\tau) F(u(\tau)) d\tau- \int_{0}^{t_3} K(t_3-\tau) F(u(\tau)) d\tau}{t-t_3} -\int_{0}^{t_3} K'(t_3-\tau) F(u) d\tau\right\|_{M^{p,1}_{s-1}}\\ 
\leq \left\|   \frac{\int_{0}^{t_3} ( K(t-\tau) - K(t_3-\tau) F(u (\tau)) d\tau}{t-t_3} -\int_{0}^{t_3} K'(t_3-\tau) F(u) d\tau\right\|_{M^{p,1}_{s-1}}\\
+ \left\|   \frac{\int_{t_3}^{t}  K(t-\tau)  F(u (\tau)) d\tau}{t-t_3} \right\|_{M^{p,1}_{s-1}}\\
\lesssim \int_{0}^{t_3} \left\|  \left( \frac{ ( K(t-\tau) - K(t_3-\tau) }{t-t_3} - K'(t_3-\tau) \right) F(u) \right\|_{M^{p,1}_{s-1}} d\tau\\
 +  \max_{\tau \in   [t_3, t]} \|K(t-\tau) F(u(\tau))\|_{M^{p,1}_{s-1}}.
\end{multlined}
\end{equation*}
If $\omega(t,x) \in C(I, M^{p,1}_s(\R^d)),$ then we have $K(t) \omega(t,x) \in C(I, M^{p,1}_{s-1}(\R^d)).$ In fact taking taking advantage of \eqref{dim} and the Lebesgue dominated convergence theorem, we can get 
\begin{eqnarray*}
\|K(t)\omega (t,x) -K(t_3)\omega(t_3,x)\|_{M^{p,1}_{s-1}} & \leq & \|(K(t) -K(t_3))\omega(t_3,x)\|_{M^{p,1}_{s-1}} \\
&& + \|K(t) (\omega (t,x) -\omega(t_3,x))\|_{M^{p,1}_{s-1}}\\
&  \ & \to 0  \ \text{as} \ t\to t_3.
\end{eqnarray*}
Recall that $F(u) \in C(\R, M^{p,1}_s(\R^d))$ and apply \eqref{dim} and  the Lebesgue dominated convergence theorem, we can get \[\left. \left( \int_{0}^{t} K(t-\tau) F(u(\tau)) d\tau\right)'_{t}  \right\vert_{t=t_3} =  \int_{t=0}^{t_3} K'(t_3-\tau) F(u(\tau)) d\tau   \  \text{in} \   M^{p,1}_{s-1}(\R^d).  \] 
Consequently,
\begin{eqnarray*}
u_t(t)= -J^2K(t)u_0 + K'(t) u_1 - \int_{0}^{t} K'(t-\tau) F(u(\tau)) d\tau \ \text{in} \ M^{p,1}_s(\R^d).
\end{eqnarray*}
Next, the proof of time continuity of $u_t$ is similar to  $u$.
It only needs to take care of the difference of smoothness
and the action of the Bessel potential. Finally, we obtain  $u \in C(\R, M^{p,1}_s(\R^d)) \cap C^{1}(\R, M^{p,1}_{s-1}(\R^d)).$ 
\end{proof}
\begin{proof}[Proof of Corollary \ref{mts}]
Let 
\[ 2v_1(t)=  u_0 + \frac{u_1}{i  \omega^{1/2}}- \int_0^t \frac{G(-\tau) F(u(\tau))}{i\omega^{1/2}} d\tau\]
and 
\[ 2v_2(t)=  u_0 - \frac{u_1}{i  \omega^{1/2}}+ \int_0^t \frac{G(-\tau) F(u(\tau))}{i\omega^{1/2}} d\tau.\]
For $0<s<t,$ we  have 
\[ v_1(t)-v_1(s) = -\int_s^t \frac{G(-\tau) F(u(\tau))}{i\omega^{1/2}} d\tau.\]
Since pair $(p,r)$ is  Klein-Gordon admissible, we  there exists $\tilde{\beta}$ such that
\[ \frac{1}{\tilde{\beta}} + \frac{3}{r}=1, \   \   \tilde{\beta}d \left(\frac{1}{2}- \frac{1}{p} \right)>1. \]

By Proposition \ref{nel} and H\"older inequality, we have 
\begin{eqnarray*}
 \|v_1(t)-v_1(s)\|_{M^{p,1}_s} & \lesssim & \int_s^t (1+ |\tau|) ^{-d (\frac{1}{2} -\frac{1}{p})} \|F(u(\tau)) \|_{M^{p',1}_s}  d\tau \\
 & \lesssim &   \int_s^t (1+ |\tau|) ^{-d (\frac{1}{2} -\frac{1}{p})} \|u \|^3_{M^{p,1}_s}  d\tau \\
 & \lesssim &  \|(1+ |\tau|) ^{-d (\frac{1}{2} -\frac{1}{p})}\|_{L^{\tilde{\beta}}} \| \|u \|^3_{M^{p,1}_s} \|_{L^{r/3}([s,t], M^{p,1}_s)} \\
 & \lesssim &  \|u\|_{L^r([s,t], M^{p,1}_s)}^3.
\end{eqnarray*}
Since $\|u\|_{L^r([s,t], M^{p,1}_s)} \leq M $, we have 
 \[\|v_1(t)-v_1(s)\|_{M^{p,1}_s}  \lesssim   \|u\|_{L^r([s,t], M^{p,1}_s)}^3 \to 0 \  \text {as} \ t,s \to \infty.\]
 This implies that $v_1(t)$ is Cauchy in $M^{p,1}_s(\R^d)$ as $t\to  \infty.$ Denote $v_1^{+}$ to be the limit:
 \begin{eqnarray*}
 2v_1^{+} & = &  \lim_{t\to + \infty} 2v_1(t)\\
 & = & u_0 +  \frac{u_1}{i  \omega^{1/2}}- \int_0^t \frac{G(-\tau) F(u(\tau))}{i\omega^{1/2}} d\tau
 \end{eqnarray*}
and 
\begin{eqnarray*}
 2v_1^{-} & = &  \lim_{t\to + \infty} 2v_1(t)\\
 & = & u_0 -  \frac{u_1}{i  \omega^{1/2}}+ \int_0^t \frac{G(-\tau) F(u(\tau))}{i\omega^{1/2}} d\tau.
 \end{eqnarray*}
Similarly, we obtain
\[ v_2^{+}(t)= \lim_{t\to \infty} v_2(t) \  \ \text{and} \ v_2^{-}(t)= \lim_{t\to \infty} v_2(t). \]
Recall that $v^{\pm}= G(t)v_{1}^{\pm} + G(t)v_2^{\pm},$ we note that
\begin{eqnarray*}
\|u(t)-v^{+}\|_{M^{p,1}_s} &  = &  \left \|  \int_{t}^{\infty} K(t- \tau) F(u(\tau))  d\tau \right\|_{M^{p,1}_s}\\
& \lesssim &  \|(1+ |\tau|) ^{-d (\frac{1}{2} -\frac{1}{p})}\|_{L^{\tilde{\beta}}} \| \|u \|^3_{M^{p,1}_s} \|_{L^{r/3}([t,\infty], M^{p,1}_s)}\\
& \lesssim &    \|u\|_{L^3([t,\infty], M^{p,1}_s)}^3 \to 0 \  \text {as} \ t \to \infty.
\end{eqnarray*}
So  is $v^{-}$ respectively. In fact, in our proof  we also have $v_1^{+} \in M^{p,1}_s(\R^d).$
\end{proof}
\begin{proof}[Proof of Theorem \ref{LW}]
Equation \eqref{nlw} can be written in the equivalent form
\begin{equation}
u(\cdot, t)= \tilde{K}(t)u_{0} +K(t)u_{1}- \int_{0}^{t} K(t-\tau)[ (V_{\gamma}\ast |u|^2)(\tau)u (\tau)] d\tau=:\mathcal{J}(u)
\end{equation}
where
$$
K(t)=\frac{\sin (t\sqrt{-\bigtriangleup})}{\sqrt{-\bigtriangleup}}, \tilde{K} (t) =\cos (t\sqrt{-\bigtriangleup} ).
$$
By using Proposition \ref{wpm} for the first two inequalities below, and Propositions \ref{fip}  and  \ref{iml} for the last inequality, we can write
\begin{equation}
\begin{cases}
\|\tilde{K}(t)u_{0}\|_{X} \leq C_{T} \|u_{0}\|_{X},\\
\|K(t)u_{1}\|_{X} \leq C_{T} \|u_{1}\|_{X},\\
\|\int_{0}^{t} K(t-\tau)[ (V_{\gamma}\ast |u|^2)(\tau)u (\tau)] d\tau \|_{X} \leq  T C_{T} \|u\|_{X}^3,
\end {cases}
\end{equation}
where $C_T$ is some constant times $(1+T^2)^{d/4}$, as before. Thus the standard contraction mapping argument can be applied to $\mathcal{J}$ to complete the proof. This completes the proof of Theorem \ref{LW} \eqref{lw2}. Taking Propositions \ref{wp}, \ref{iml} and  Corollary \ref{afi} and Lemma \ref{cl} into account,  the    standard contraction mapping argument give the proof of Theorem \ref{LW} \eqref{lw1}.
\end{proof}
\section{Proofs of Theorems \ref{sdhte}, \ref{scft} and \ref{lwhte}}\label{ps2}
In order to prove Theorem \ref{scft}  first we shall prove following Strichartz type estimates for Schr\"odinger admissible pairs.  Specifically, we have 
\begin{Proposition}\label{snel}   Let $ F(u)= (V_{\gamma}\ast |u|^2)u,  p\in (2,3),$ $\frac{1}{p}+\frac{\gamma}{d}-1= \frac{1}{2p'}$ and   pair $(p,r)$ is  Schr\"odinger admissible. Then we have
\begin{eqnarray*}
\left \|  \int_{0}^{t} S(t-\tau) F(u(\tau))  d\tau \right\|_{L^{r}_t (\R, M^{p,1}_s)} &  \lesssim  &\|F(u)\|_{L^{r/3}_t(\R, M^{p',1}_s)}  \lesssim  \|u\|_{L^{r}_t(\R, M^{p,1}_s)}^3.
\end{eqnarray*}
\end{Proposition}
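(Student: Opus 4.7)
The plan is to mimic the proof of Proposition \ref{nel} almost verbatim, replacing the Klein-Gordon decay estimate (Proposition \ref{wp}) by the Schr\"odinger decay estimate (Proposition \ref{uf}) specialized to $\alpha = 2$. Because the Schr\"odinger propagator carries no regularity loss---there is no weight shift $s + \theta 2\sigma(p)$ on the right-hand side, only an exchange of $p$ and $p'$---the argument is in fact somewhat simpler than in the Klein-Gordon case.

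First I would apply the generalized Minkowski inequality to move the $M^{p,1}_s$ norm inside the time integral, then invoke the interpolated Schr\"odinger decay estimate
\[
\|S(t-\tau) g\|_{M^{p,1}_s} \lesssim (1+|t-\tau|)^{-d\theta(1/2-1/p)} \|g\|_{M^{p',1}_s}
\]
for a suitable $\theta \in [0,1]$ (obtained by interpolating the full decay from Proposition \ref{uf} at $\theta = 1$ with the uniform bound at $\theta = 0$). Setting $g(t) = (1+|t|)^{-d\theta(1/2-1/p)}$ and $h(\tau) = \|F(u(\tau))\|_{M^{p',1}_s}$, the left-hand side is controlled by $\|g * h\|_{L^r_t(\R)}$.

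I would then split into the two cases dictated by the Schr\"odinger admissibility \eqref{sra}. In Case I, where $\frac{1}{\beta} = 1 \wedge d(\frac{1}{2} - \frac{1}{p})$, choose $\theta \in (0,1]$ with $\frac{1}{\beta} = \theta d(\frac{1}{2} - \frac{1}{p})$; then the admissibility identity $\frac{1}{\beta} + \frac{2}{r} = 1$ matches the one-dimensional Hardy-Littlewood-Sobolev exponents (after majorizing $(1+|t|)^{-a}$ by $|t|^{-a}$), giving $\|g * h\|_{L^r_t} \lesssim \|h\|_{L^{r/3}}$, and the excluded endpoint $(p,r) \neq (\frac{2d}{d-2}, \infty)$ keeps $r/3 > 1$ in the HLS hypothesis. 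In Case II, $\frac{1}{\beta} < \theta d(\frac{1}{2} - \frac{1}{p}) \leq 1 \wedge d(\frac{1}{2} - \frac{1}{p})$ for some admissible $\theta$; then $g \in L^\beta(\R)$ and Young's convolution inequality with exponent relation $\frac{1}{r} + 1 = \frac{1}{\beta} + \frac{3}{r}$ closes the estimate.

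The second inequality $\|F(u)\|_{L^{r/3}(\R, M^{p',1}_s)} \lesssim \|u\|_{L^r(\R, M^{p,1}_s)}^3$ is lifted verbatim from the end of Proposition \ref{nel}: the algebra property Proposition \ref{gap} yields $\|F(u)\|_{M^{p',1}_s} \lesssim \|T_\gamma |u|^2\|_{M^{2p',1}_s} \|u\|_{M^{2p',1}_s}$; the relation $\frac{1}{p} + \frac{\gamma}{d} - 1 = \frac{1}{2p'}$ lets Proposition \ref{fip} produce $\|T_\gamma |u|^2\|_{M^{2p',1}_s} \lesssim \|u\|_{M^{p,1}_s}^2$; Lemma \ref{rl} (valid since $p < 3$ forces $p \leq 2p'$) supplies the embedding $M^{p,1}_s \hookrightarrow M^{2p',1}_s$; and a H\"older in time converts $\|u\|^3_{L^r_t}$ into an $L^{r/3}_t$ bound. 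The main obstacle is simply the bookkeeping for the Schr\"odinger admissibility window \eqref{sra}, which differs from the Klein-Gordon one \eqref{kga} and in particular requires separate HLS and Young treatments near the excluded endpoint; once this is sorted, the rest of the argument is a direct transcription.
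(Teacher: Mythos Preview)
Your overall strategy matches the paper's: Minkowski to pull the $M^{p,1}_s$ norm inside, the decay estimate from Proposition \ref{uf}, then a time-convolution bound by either Young or one-dimensional HLS, and finally the trilinear estimate via Propositions \ref{gap} and \ref{fip} exactly as at the end of Proposition \ref{nel}. There is, however, a gap in your Case I. When $d(\tfrac{1}{2}-\tfrac{1}{p}) > 1$ (this occurs for admissible $p \in (2,3)$ once $d \geq 7$), the equality $\tfrac{1}{\beta} = 1 \wedge d(\tfrac{1}{2}-\tfrac{1}{p})$ gives $\beta = 1$ and hence $r = \infty$; this pair is \emph{not} the excluded endpoint (that one corresponds to $d(\tfrac{1}{2}-\tfrac{1}{p}) = 1$), so it must be treated. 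Your $\theta$-choice then yields $g(t) = (1+|t|)^{-1}$, and one-dimensional HLS is unavailable at exponent $1$ with target $L^\infty$. So the claim that excluding $(\tfrac{2d}{d-2},\infty)$ ``keeps $r/3>1$ in the HLS hypothesis'' is not the whole story: it is $r<\infty$ (equivalently $\beta>1$) that fails here.

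The paper avoids this by never introducing $\theta$ at all---there is no regularity loss to manage, unlike the Klein-Gordon case where $\theta$ was needed to keep the shift $\theta\,2\sigma(p)-1\le 0$. Using the full decay $(1+|t|)^{-d(1/2-1/p)}$ throughout, the paper splits into four cases: Young's inequality whenever $\beta d(\tfrac{1}{2}-\tfrac{1}{p}) > 1$ (its Cases I and II, the latter being precisely $\beta = 1$ with $d(\tfrac{1}{2}-\tfrac{1}{p}) > 1$, where $g \in L^1$ and $\|g * h\|_{L^\infty} \le \|g\|_{L^1}\|h\|_{L^\infty}$), HLS at the borderline $\beta d(\tfrac{1}{2}-\tfrac{1}{p}) = 1$ (its Case III), and the excluded endpoint (its Case IV). Once you drop the unnecessary $\theta$ and insert this extra Young sub-case at $\beta = 1$, your argument is complete and coincides with the paper's.
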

\begin{proof} By general Minkowski inequality, Proposition \ref{uf},  we have 
\begin{eqnarray*}
\left \|  \int_{0}^{t} S(t-\tau) F(u(\tau))  d\tau \right\|_{L^{r}_t (\R, M^{p,1}_s)} & \lesssim & \left \|  \int_{0}^{t} \|S(t-\tau) F(u(\tau)) \|_{M^{p,1}_s} d\tau \right\|_{L^{r}_t(\R)}\\
& \lesssim & \left \|  \int_{0}^{t} (1+ |t-\tau|)^{-d(1/2-1/p)} \|F(u)\|_{M^{p',1}_{s}} d\tau \right\|_{L^{r}_t(\R)}\\
& \lesssim & \left \|  \int_{\mathbb R } (1+ |t-\tau|)^{-d(1/2-1/p)} h(\tau) d\tau\right\|_{L^{r}_t(\mathbb R)}\\
& \lesssim &  \| g\ast h \|_{L^r_t},
\end{eqnarray*}
where  $ h(\tau)=  \|F(u)\|_{M^{p',1}_{s}}, g(t)=(1+ |t|)^{-d(1/2-1/p)}. $
We divide Schro\"odinger admissible pairs (see \eqref{sra}) into several cases.
\item\textbf{ Case I}:  $\frac{1}{\beta}<d \left( \frac{1}{2} - \frac{1}{p} \right)\wedge 1.$
In this case  we have 
\[ d\beta \left( \frac{1}{2} - \frac{1}{p} \right)>1.\]
Using Young inequality and H\"older's inequality we have
\begin{eqnarray*}
\left \|  \int_{0}^{t} S(t-\tau) F(u(\tau))  d\tau \right\|_{L^{r}_t (\R, M^{p,1}_s)} & \lesssim & \|g\|_{L^{\beta}} \|F(u)\|_{L^{r/3}(\R, M_s^{p',1}(\R^d))}\\
& \lesssim & \|u\|_{L^{r}(\R, M_s^{p,1}(\R^d))}^3.
\end{eqnarray*}
\item \textbf{Case II}: $\frac{1}{\beta}= 1\wedge d \left( \frac{1}{2} - \frac{1}{p} \right), d \left( \frac{1}{2} - \frac{1}{p} \right)>1.$
In this case, we can get  $\beta=1$ and $r= \infty.$ Obviously
\[ d\beta \left( \frac{1}{2} - \frac{1}{p} \right)>1,\]
and therefor, we have the desired result by the same way as Case I. 
\item\textbf{ Case III}: $\frac{1}{\beta}= 1\wedge d \left( \frac{1}{2} - \frac{1}{p} \right), d \left( \frac{1}{2} - \frac{1}{p} \right)<1.$
In this case we have
\[ d\beta \left( \frac{1}{2} - \frac{1}{p} \right)=1.\]
Since pair $(p,r)$ is  Schr\"odinger  admissible, we have
\[\frac{1}{r}= \frac{3}{r}- \frac{1- d(1/2-1/p)}{1}\]
and $r/3>1.$ By Hardy-Littlewood-Sobolev inequality in one dimension, we have 
\begin{eqnarray*}
\left \|  \int_{0}^{t} K(t-\tau) F(u(\tau))  d\tau \right\|_{L^{r}_t (\R, M^{p,1}_s)} 
& \lesssim &  \| g\ast h \|_{L^r_t (\mathbb R)}\\
& \lesssim  &  \left\|\|F(u)\|_{M^{p',1}_{s}}\right\|_{L^{r/3}}\\
& \lesssim & \|F(u)\|_{L^{r/3}(\R, M^{p',1}_s)} \lesssim \|u\|_{L^{r}(\R, M^{p,1}_s)}^3.
\end{eqnarray*}
\item \textbf{Case IV}: $\frac{1}{\beta}= 1\wedge d \left( \frac{1}{2} - \frac{1}{p} \right), d \left( \frac{1}{2} - \frac{1}{p} \right)=1.$
In  this case
\[ (p,r) = \left( \frac{2d}{d-2}, \infty \right)\]
which is not Schr\"odinger admissible.
\end{proof}

\begin{Lemma}\label{csap} Let $ F(u)= (V_{\gamma} \ast |u|^2)u, p\in (2,3),$ $\frac{1}{p}+\frac{\gamma}{d}-1= \frac{1}{2p'}$ and pair $(p,r)$ is  Schr\"odinger  admissible. Then   we have 
\begin{multline*} 
 \left \|  \int_{0}^{t} S(t- \tau)[ F(u(\tau)) - F(v(\tau))]  d\tau \right\|_{L^{r}_t (\R, M^{p,1}_s)}\\ \lesssim  \left(\|u\|_{L^{r}_t(\R, M^{p,1}_s)}^{2}+\|u\|_{L^{r}_t(\R, M^{p,1}_s)}\|v\|_{L^{r}_t(\R, M^{p,1}_s)}+ \|v\|_{L^{r}_t(\R, M^{p,1}_s}^{2}\right)  \|u-v\|_{L^{r}_t(\R, M^{p,1}_s)}^3.
\end{multline*}
\end{Lemma}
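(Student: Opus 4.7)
The plan is to mirror the proof of Lemma \ref{clp}, replacing the Klein--Gordon Strichartz estimate (Proposition \ref{nel}) with its Schr\"odinger analogue (Proposition \ref{snel}), which has already been set up precisely so that this adaptation goes through.

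First I would note that Proposition \ref{snel} is really a linear statement about the Duhamel operator $g \mapsto \int_0^t S(t-\tau) g(\tau)\,d\tau$: the first inequality there is proved by Minkowski, the $(1+|t|)^{-d(1/2-1/p)}$ time decay of $S(t)$ on modulation spaces, and a convolution-in-time argument (Young or one-dimensional Hardy--Littlewood--Sobolev, depending on which of the four cases of the Schr\"odinger admissibility condition is relevant). Applied with $F(u)-F(v)$ in place of $F(u)$, this immediately gives
\[
\left\|\int_0^t S(t-\tau)\bigl[F(u(\tau))-F(v(\tau))\bigr]\,d\tau\right\|_{L^r_t(\R,M^{p,1}_s)} \lesssim \|F(u)-F(v)\|_{L^{r/3}_t(\R,M^{p',1}_s)}.
\]
Here the assumption $(p,r) \neq (2d/(d-2),\infty)$ in \eqref{sra} is crucial: it excludes the degenerate Case IV in the proof of Proposition \ref{snel}, so that the reduction to the $L^{r/3}_t M^{p',1}_s$ norm is always available under Schr\"odinger admissibility.

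Second, I would invoke Lemma \ref{mcl}, which is stated precisely for the exponent balance $\tfrac{1}{p}+\tfrac{\gamma}{d}-1 = \tfrac{1}{2p'}$ that we are in, to get the pointwise-in-time estimate
\[
\|F(u(\tau))-F(v(\tau))\|_{M^{p',1}_s} \lesssim \bigl(\|u(\tau)\|_{M^{p,1}_s}^2 + \|u(\tau)\|_{M^{p,1}_s}\|v(\tau)\|_{M^{p,1}_s} + \|v(\tau)\|_{M^{p,1}_s}^2\bigr)\|u(\tau)-v(\tau)\|_{M^{p,1}_s}.
\]
Taking the $L^{r/3}_t$ norm of both sides and applying H\"older in time with exponents $(r/3, r/3, r/3)$, so that the three cubic factors on the right-hand side combine into $L^r_t(\R, M^{p,1}_s)$ norms, produces the desired bound.

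The only nontrivial ingredient is getting the pointwise product estimate in Lemma \ref{mcl} to interact correctly with the $L^{r/3}_t$ integration; this is precisely where Schr\"odinger admissibility enters through Proposition \ref{snel}. Everything else is algebra of exponents already checked in Lemma \ref{cl} and Lemma \ref{mcl}, so there is no genuine obstacle beyond a careful statement of which lemma is applied at each step.
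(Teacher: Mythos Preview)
Your proposal is correct and follows essentially the same approach as the paper: the paper's own proof simply reads ``Using Propositions \ref{gap} and \ref{snel}, Lemma \ref{rl}\eqref{ir} and H\"older inequality, the proof can be produced. We omit the details.'' Your version unpacks this by first applying the linear Duhamel estimate from Proposition \ref{snel} and then invoking Lemma \ref{mcl} (which itself packages Proposition \ref{gap} and Lemma \ref{rl}\eqref{ir}) together with H\"older in time, exactly paralleling the proof of Lemma \ref{clp} as you intended.
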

\begin{proof}
Using Propositions \ref{gap} and \ref{snel}, Lemma \ref{rl} \eqref{ir} and H\"older inequality, the proof can be produced.  We omit the details. 
\end{proof}

\begin{proof}[Proof of Theorem \ref{sdhte}] For $\alpha=2,$ we may rewrite equation \eqref{fHTE} in the following form
\[u(t)=S(t)u_0  - \int_{0}^{t} S(t-\tau) F(u(\tau)) d\tau=:\mathcal{J}(u)\]
where $S(t)=e^{-it\Delta}$ and $F(u)=(V_{\gamma}\ast |u|^2)u.$
Denote $X= L^{r}(\R, M_s^{p,1} (\R^d)).$ 
For $\delta>0,$ we put 
$ B_{\delta}=\{ u \in X: \|u\|_{X} \leq \delta \} -$ which is the closed ball of radius $\delta$, and centered at the origin in $X.$
Since $ rd\left( \frac{1}{2}- \frac{1}{p} \right) >1,$ we have
$(1+ |t|)^{-d \left(\frac{1}{2}- \frac{1}{p} \right)} \in L^{r}(\R).$ Now
by  Proposition \ref{uf}, we have
\[ \|S(t) u_0 \|_{X} \lesssim  \left\| (1+ |t|)^{-d \left(\frac{1}{2}- \frac{1}{p} \right)}  \|u_0\|_{M^{p',1}_{s}}\right\|_{L^{r}}   \lesssim   \|u_0\|_{M^{p',1}_{s}}. \] 
By Proposition \ref{snel}, we have 
\[ \left \|  \int_{0}^{t} S(t-\tau)) F(u(\tau))  d\tau \right\|_{X} \lesssim \|u\|_{X}^3. \]
Thus we have 
\begin{eqnarray*}
\| \mathcal{J}(u)\|_{X} \lesssim  \|u_0\|_{M^{p',1}_{s}}   + \|u\|_{X}^3.
\end{eqnarray*}
By Lemma \ref{csap}, for any $u, v \in B_{\delta},$ we have 
\[ \|\mathcal{J}u - \mathcal{J}v \|_{X} \lesssim  \left(\|u\|^2_X+\|u\|_X\|v\|_X+ \|v\|_X^{2}\right) \|u-v\|_{X} .\]
If we assume that $\delta>0$  is sufficiently small, then $\mathcal{J}: X \to X$  is a strict contraction. Therefor $\mathcal{J}$ has a unique fixed point and we have $u\in L^{r}(\R, M^{p,1}_s(\R^d))$ and  $\|u\|_{ L^{r}(\R, M_s^{p,1}(\R^d))} \lesssim \|u_0\|_{M^{p',1}_{s}}.$
We want to show that if  $f\in M^{p,1}_s(\R^d)$ then  $S(t)f \in C(\R, M^{p,1}_s(\R^d)).$
Let $t>0$ and $t_n \to t.$  By Lemma \ref{dl}, Proposition \ref{uf} and  the triangle inequality, we have 
 \begin{eqnarray*}
 \|S(t)f- S(t_n)f\|_{M^{p,1}_s}  & \leq&  \|S(t)f-S(t)g\|_{M^{p,1}_s}+ \|S(t)g-S(t_n)g\|_{M^{p,1}_s}\\
 &&+\|S(t_n)f-S(t_n)g\|_{M^{p,1}_s}.
 \end{eqnarray*}
 We only need to treat  the case $f\in \mathcal{S}^{\Omega}.$ Using Lemma \ref{lpm} and the Hausdroff-Young inequality, we have 
 \begin{eqnarray*}
 \| \square_k (S(t_n)-S(t))f \|_{L^p} \lesssim \| (S(t_n)-S(t))f \|_{L^{p}} \lesssim  \| (e^{it_n |\xi|^2}-e^{it|\xi|^2}) \hat{f}\|_{L^{p'}}\to 0
 \end{eqnarray*}
 as $ t_n\to t$
 by Lebesgue dominated convergence theorem. Since $f\in \mathcal{S}^{\Omega},$ there exist only finite number of $k$ such that 
 \[  \square_k (S(t_n)-S(t))f \neq 0\]
 and thus we have 
 \[ \|S(t)f- S(t_n)f\|_{M^{p,1}_s} \to 0 \ \text{as} \ t_n \to t.\]
  We write
 \begin{eqnarray*}
 I & = &   \int_0^{t} S(t-\tau) F(u(\tau)) d\tau- \int_0^{t_n}S(t_n-\tau) F(u(\tau)) d\tau \\
&  = &  \left( \int_0^{t_n}S(t-\tau) F(u(\tau)) d\tau - \int_0^{t_n}S(t_n-\tau) F(u(\tau)) d\tau\right) \\
&& + \left( \int_0^{t}S(t-\tau) F(u(\tau)) d\tau-\int_0^{t_n}S(t-\tau) F(u(\tau)) d\tau \right) \\
& = & I_1+ I_2.
 \end{eqnarray*} 
For $I_2,$ we have 
\begin{eqnarray*}
\|I_2\|_{M^{p,1}_s} & \lesssim &  \int_{t_n}^{t} \|S(t- \tau) F(u)(\tau)\|_{M^{p,1}_s} d\tau \\
& \lesssim &  \int_{t_n}^{t} (1+ |t-\tau|)^{-d(1/2-1/p)} \|F(u)(\tau)\|_{M^{p',1}_s} d\tau \\
& \lesssim &  \int_{t_n}^{t}  \|u\|_{M^{p,1}_s}^3 d\tau\\
& \lesssim &  |t-t_n|^{\beta} \|u\|^3_{L^{r} ([0, t], M^{p,1}_s)}\to 0.
\end{eqnarray*} 
For $I_1,$ we have 
\begin{eqnarray*}
I_1 & \lesssim & \int_0^{t_n} \|S(\tau) (S(t_n)- S(t)) F(u(\tau))\|_{M^{p,1}_s} d\tau \\
& \lesssim & \int_{I} \|(S(t_n)-S(t)) F(u(\tau))\|_{M^{p,1}_s} d\tau. 
\end{eqnarray*}
We note that  $\|(S(t_n)-S(t)) F(u(\tau))\|_{M^{p,1}_s} \lesssim \|F(u)(\tau)\|_{M^{p,1}_s}^3 $ and recalling $r\geq 3$ and $u \in L^{r}(\R, M^{p,1}_s(\R^d)),$ we have  $\|u(\tau)\|_{M^{p,1}_s}^3\in L^{1}[0,t].$  Since $F(u) \in M^{p,1}_s(\R^d),$ for every $\tau \in [0, t]$ we have  $\|(S(t_n)-S(t)) F(u(\tau))\|_{M^{p,1}_s} \to 0.$
\end{proof}
\begin{proof}[Proof of Corollary \ref{ssdhte}] We only prove the statement for $u_{+},$ since the proof for $u_{-}$ follows similarly. Let us first construct the scattering state $u_{+}(0).$ For $t>0$ define $v(t)=e^{-it\Delta}u(t).$  We will show that $v(t)$ converges in $M^{p,1}_s(\R^d)$ as $t\to \infty,$ and define $u_{+}$ to be the limit. Indeed from  Duhamel's formula we have 
\begin{eqnarray}\label{te}
v(t)=u_0  - \int_{0}^{t} e^{-i\tau \Delta} F(u(\tau)) d\tau \   \ (F(u)= (V_{\gamma} \ast |u|^2)u).
\end{eqnarray}
Therefore, for $0<s<t,$ we  have 
\[ v(t)-v(s) = -i\int_s^t e^{-i\tau \Delta} F(u(\tau)) d\tau.\]
Since pair $(p,r)$ is  a Schr\"odinger admissible,  there exists $\tilde{\beta}$ such that
\[ \frac{1}{\tilde{\beta}} + \frac{3}{r}=1, \   \   \tilde{\beta}d \left(\frac{1}{2}- \frac{1}{p} \right)>1. \]
By Proposition \ref{nel} and H\"older inequality, we have 
\begin{eqnarray*}
 \|v(t)-v(s)\|_{M^{p,1}_s} & \lesssim & \int_s^t (1+ |\tau|) ^{-d (\frac{1}{2} -\frac{1}{p})} \|F(u(\tau)) \|_{M^{p',1}_s}  d\tau \\
 & \lesssim &   \int_s^t (1+ |\tau|) ^{-d (\frac{1}{2} -\frac{1}{p})} \|u \|^3_{M^{p,1}_s}  d\tau \\
 & \lesssim &  \|(1+ |\tau|) ^{-d (\frac{1}{2} -\frac{1}{p})}\|_{L^{\tilde{\beta}}} \| \|u \|^3_{M^{p,1}_s} \|_{L^{r/3}([s,t], M^{p,1}_s)} \\
 & \lesssim &  \|u\|_{L^r([s,t], M^{p,1}_s)}^3.
\end{eqnarray*}
Since $\|u\|_{L^r(\R, M^{p,1}_s)} \leq M $, we have 
 \[\|v(t)-v(s)\|_{M^{p,1}_s}  \lesssim   \|u\|_{L^r([s,t], M^{p,1}_s)}^3 \to 0 \  \text {as} \ t,s \to \infty.\]
 This implies that $v(t)$ is Cauchy in $M^{p,1}_s(\R^d)$ as $t\to  \infty.$  We define $u_{+}$ to be the limit. In view of \eqref{te},  we see that
\[ u_+(0)= u_0  - \int_{0}^{\infty} e^{-i\tau \Delta} F(u(\tau)) d\tau\]
and thus 
 \[ u_+(t)= e^{it\Delta}u_0  - \int_{0}^{\infty} e^{i(t-\tau) \Delta} F(u(\tau)) d\tau.\]
 We note that 
 \begin{eqnarray*}
\|u(t)-e^{it\Delta}u_{+}\|_{M^{p,1}_s} &  = &  \left \|  \int_{t}^{\infty} S(t- \tau) F(u(\tau))  d\tau \right\|_{M^{p,1}_s}\\
& \lesssim &  \|(1+ |\tau|) ^{-d (\frac{1}{2} -\frac{1}{p})}\|_{L^{\tilde{\beta}}} \| \|u \|^3_{M^{p,1}_s} \|_{L^{r/3}([t,\infty], M^{p,1}_s)}\\
& \lesssim &    \|u\|_{L^r([t,\infty], M^{p,1}_s)}^3 \to 0 \  \text {as} \ t \to \infty.
\end{eqnarray*}
In fact, in our proof  we also have $ e^{it\Delta}u_0, e^{it\Delta}u_+ \in M^{p,1}_s(\R^d).$
\end{proof}
In order to prove Theorem \ref{scft} first we recall following
\begin{Lemma}[\cite{bhimani2018nonlinear}]\label{lcg} Let  $V\in M^{\infty,1}(\mathbb R^d),$  and $1\leq p, q \leq 2.$ For $f \in M^{p,q}(\R^d),$ we have $$ \|(V\ast |f|^{2}) f\|_{M^{p,q}} \lesssim\|f\|_{M^{p,q}}^{3} ,$$and
$$\| (V\ast |f|^{2})f - (V\ast |g|^{2})g\|_{M^{p,q}} \lesssim  (\|f\|_{M^{p,q}}^{2}+\|f\|_{M^{p,q}}\|g\|_{M^{p,q}}+ \|g\|_{M^{p,q}}^{2}) \|f-g\|_{M^{p,q}}.$$
\end{Lemma}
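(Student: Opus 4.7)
The plan is to invoke the algebra and multiplier properties of $M^{\infty,1}(\R^d)$ on modulation spaces, combined with the conjugation invariance of $M^{p,q}$ from Lemma \ref{rl}\eqref{ic}, to handle the cubic Hartree nonlinearity $(V\ast|f|^2)f$ and its Lipschitz difference term by term.

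For the cubic estimate, the key structural fact is that $M^{\infty,1}(\R^d)$ acts as both a convolution algebra and a pointwise multiplier algebra on every $M^{p,q}(\R^d)$: specifically,
\[
\|V\ast h\|_{M^{p,q}}\lesssim\|V\|_{M^{\infty,1}}\|h\|_{M^{p,q}}, \qquad \|V\cdot h\|_{M^{p,q}}\lesssim\|V\|_{M^{\infty,1}}\|h\|_{M^{p,q}},
\]
for any $1\le p,q\le\infty$. Applying the convolution bound with $h=|f|^2$, then the multiplication bound with the result times $f$, reduces the problem to showing $\||f|^2\|_{M^{p,q}}\lesssim\|f\|_{M^{p,q}}^2$ in the range $1\le p,q\le 2$; writing $|f|^2=f\bar f$ and using conjugation invariance of $M^{p,q}$, this is the essential multiplication estimate to be established. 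For the Lipschitz difference, I would insert the standard algebraic splittings
\[
(V\ast|f|^2)f-(V\ast|g|^2)g=(V\ast|f|^2)(f-g)+\bigl(V\ast(|f|^2-|g|^2)\bigr)g,
\]
together with $|f|^2-|g|^2=f(\bar f-\bar g)+\bar g(f-g)$, and apply the above scheme to each piece, collecting $\|f-g\|_{M^{p,q}}$ multiplied by a quadratic combination of $\|f\|_{M^{p,q}}$ and $\|g\|_{M^{p,q}}$.

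The main obstacle is the product estimate $\|f\bar f\|_{M^{p,q}}\lesssim\|f\|^2_{M^{p,q}}$ across the full range $1\le p,q\le 2$. A direct application of Proposition \ref{gap} with two factors each in $M^{p,q}$ lands the product in $M^{p/2,\,q/(2-q)}$; since $q/(2-q)\le q$ only when $q\le 1$, one cannot close via a simple inclusion (Lemma \ref{rl}\eqref{ir}) for $1<q\le 2$. The standard workaround is a frequency-uniform-decomposition argument: expand $f=\sum_k\square_k f$, use the fact that $\widehat{\square_k f\cdot\square_\ell f}$ has Fourier support in $Q_k+Q_\ell$ so that almost-orthogonality reduces the product to a sequence-level convolution, and close via Young's inequality on the $\ell^q$-sequence side coupled with a uniform $L^p$-multiplier bound on each dyadic block. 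This Wiener-algebra-style argument is precisely the one carried out in \cite{bhimani2018nonlinear}, which the present paper cites directly.
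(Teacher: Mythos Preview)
Your convolution bound $\|V\ast h\|_{M^{p,q}}\lesssim\|V\|_{M^{\infty,1}}\|h\|_{M^{p,q}}$ is false: under the standard convolution relations for modulation spaces one needs $\frac{1}{p_1}+\frac{1}{p_2}=1+\frac{1}{p_0}$ and $\frac{1}{q_1}+\frac{1}{q_2}=\frac{1}{q_0}$, so it is $M^{1,\infty}$, not $M^{\infty,1}$, that acts by convolution on every $M^{p,q}$. A concrete obstruction is the constant function $V\equiv 1$, which belongs to $M^{\infty,1}$ but for which $V\ast h=\int h$ is not even defined for generic $h\in M^{p,q}$ when $p>1$. Consequently your reduction to the bilinear estimate $\||f|^2\|_{M^{p,q}}\lesssim\|f\|_{M^{p,q}}^2$ never gets off the ground; and that bilinear estimate is itself the wrong target, since $M^{p,q}$ is not a multiplication algebra for $q>1$ and no frequency--uniform decomposition will manufacture it.

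The paper does not supply its own proof of this lemma---it simply cites \cite{bhimani2018nonlinear}---but the correct mechanism is displayed a few lines later inside the proof of Theorem~\ref{scft}. The point is to route the convolution factor through $M^{\infty,1}$ rather than through $M^{p,q}$: first use the pointwise-multiplier action $M^{\infty,1}\cdot M^{p,q}\hookrightarrow M^{p,q}$ (Proposition~\ref{gap}) to get
\[
\|(V\ast|f|^2)f\|_{M^{p,q}}\lesssim\|V\ast|f|^2\|_{M^{\infty,1}}\|f\|_{M^{p,q}};
\]
then use the convolution relation $M^{\infty,1}\ast M^{1,\infty}\hookrightarrow M^{\infty,1}$ to obtain $\|V\ast|f|^2\|_{M^{\infty,1}}\lesssim\|V\|_{M^{\infty,1}}\||f|^2\|_{M^{1,\infty}}$; finally
\[
\||f|^2\|_{M^{1,\infty}}\lesssim\||f|^2\|_{L^1}=\|f\|_{L^2}^2\lesssim\|f\|_{M^{p,q}}^2,
\]
the last inequality using $M^{p,q}\hookrightarrow M^{2,2}=L^2$ from Lemma~\ref{rl}\eqref{ir}, which is exactly where the hypothesis $1\le p,q\le 2$ enters. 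Your algebraic splittings for the Lipschitz difference are fine and feed directly into this same chain.
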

\begin{proof}[Proof of Theorem \ref{scft}]  Recall \eqref{fHTE} can be written in the equivalent form
\begin{equation*}\label{df11}
u(\cdot, t)= U(t)u_{0}-i\int_{0}^{t}U(t- \tau)[(V\ast |u|^2)u] \, d\tau=:\mathcal{J}(u).
\end{equation*}
We first  prove the local existence on $[0,T)$ for some $T>0.$  
By Minkowski's inequality for integrals,  Proposition \ref{uf} and  Lemma \ref{lcg}, we obtain
\begin{eqnarray*}
\left\| \int_{0}^{t} U(t-\tau) [(V\ast |u|^{2}(\tau)) u(\tau)]  \, d\tau \right\|_{M^{p,q}} 
   & \leq &  c T(1+|t|)^{d\left| \frac{1}{p}-\frac{1}{2} \right|} \|u(t)\|_{M^{p,p}}^{3},
\end{eqnarray*}
 for some universal constant $c.$
By Proposition \ref{uf} and the above inequality, we have
\begin{eqnarray*}
\|\mathcal{J}u\|_{C([0, T], M^{p,q})} \leq  C_{T} ( \|u_{0}\|_{M^{p,q}} + cT  \|u\|_{M^{p,q}}^{3})
\end{eqnarray*}
where $C_{T}=(1+|T|)^{d\left| \frac{1}{p}-\frac{1}{2} \right|}.$
For $M>0$, put  $$B_{T, M}= \{u\in C([0, T], M^{p,q}(\mathbb R^{d})):\|u\|_{C([0, T], M^{p,q})}\leq M \},$$  which is the  closed ball  of radius $M$, and centered at the origin in  $C([0, T], M^{p,q}(\mathbb R^{d}))$.  
Next, we show that the mapping $\mathcal{J}$ takes $B_{T, M}$ into itself for suitable choice of  $M$ and small $T>0$. Indeed, if we let, $M= 2C_{T}\|u_{0}\|_{M^{p,p}}$ and $u\in B_{T, M},$ it follows that 
\begin{eqnarray*}
\|\mathcal{J}u\|_{C([0, T], M^{p,p})} \leq  \frac{M}{2} + c C_{T}T M^{3}.
\end{eqnarray*}
We choose a  $T$  such that  $c C_{T}TM^{2} \leq 1/2,$ that is, $T \leq \tilde{T}(\|u_0\|_{M^{p,p}})$ and as a consequence  we have
\begin{eqnarray*}
\|\mathcal{J}u\|_{C([0, T], M^{p,p})} \leq \frac{M}{2} + \frac{M}{2}=M,
\end{eqnarray*}
that is, $\mathcal{J}u \in B_{T, M}.$
By Lemma \ref{lcg}, and the arguments as before, we obtain
\begin{eqnarray*}
\|\mathcal{J}u- \mathcal{J}v\|_{C([0, T], M^{p,q})} \leq \frac{1}{2} \|u-v\|_{C([0, T], M^{p,q})}.
\end{eqnarray*}
Therefore, using Banach's contraction mapping principle, we conclude that $\mathcal{J}$ has a fixed point in $B_{T, M}$ which is a solution of \eqref{fHTE}. 

Indeed, the solution constructed before is global in time: in view of the conservation of $L^{2}$ norm, Proposition \ref{gap} and Lemma \ref{rl}, we have
\begin{eqnarray*}
\|u((t)\|_{M^{p,p}} & \lesssim  &  C_{T} \left(  \|u_{0} \|_{M^{p,q}} + \int_{0}^{t} \|V\ast |u(\tau)|^{2}\|_{M^{\infty,1}} \|u(\tau)\|_{M^{p,q}} d\tau \right) \nonumber \\
& \lesssim &   C_{T} \left(  \|u_{0}\|_{M^{p,q}} + \int_{0}^{t} \|V\|_{M^{\infty,1}} \||u(t)|^{2}\|_{M^{1, \infty}} \|u(\tau)\|_{M^{p,q}} d\tau \right) \nonumber \\
& \lesssim &  C_{T} \left(  \|u_{0}\|_{M^{p,q}} + \int_{0}^{t}  \||u(t)|^{2}\|_{L^1} \|u(\tau)\|_{M^{p,q}} d\tau \right) \nonumber \\
& \lesssim &  C_{T} \left( \|u_{0}\|_{M^{p,p}} +  \|u_{0}\|_{L^{2}}^{2} \int_{0}^{t}\|u(\tau)\|_{M^{p,p}} d\tau \right)
\end{eqnarray*}
and by Gronwall inequality, we conclude that $\|u(t)\|_{M^{p,q}}$ remains bounded on finite time intervals. This completes the proof.
\end{proof}
\begin{proof}[Proof of Theorem \ref{lwhte}] Recall \eqref{fHTE} can be written in the equivalent form
\begin{equation*}\label{df11}
u(\cdot, t)= U(t)u_{0}-i\int_{0}^{t}U(t- \tau)[(V\ast |u|^2)u] \, d\tau=:\mathcal{J}(u).
\end{equation*}
By using Proposition \ref{uf} and  Corollary \ref{afi}, we can write
\begin{equation}
\begin{cases}
\|U(t)u_{0}\|_{M^{p,1}_s} \leq C_{T} \|u_{0}\|_{M^{p,1}_s},\\
\|\int_{0}^{t} U(t-\tau)[ (V_{\gamma}\ast |u|^2)(\tau)u (\tau)] d\tau \|_{X} \leq  T C_{T} \|u\|_{M_s^{p,1}}^3,
\end {cases}
\end{equation}
where $C_T$ is some constant times $(1+T^2)^{d/4}$, as before. Thus the standard contraction mapping argument can be applied to $\mathcal{J}$ to complete the proof.
\end{proof}
\section{Local well-posedness with potential  $V \in \mathcal{F}L^q$ or   $M^{1, \infty}$ or $M^{\infty, 1}$}\label{lwtp}
We consider generalized Klein-Gordon equation with  Hartree type linearity:
\begin{equation}\label{gnlkgh}
u_{tt}+ (I-\Delta)u = (V\ast |u|^{2k})u, 
u(0)=u_0, u_t(0)=u_1  \ (k\in \mathbb N).
\end{equation}
When $k=1,$ equation \eqref{gnlkgh} is coincides with \eqref{nlkgh}.
\begin{Theorem}[Local well-posedness]\label{wht1} Let $i=0,1.$
\begin{enumerate}
\item Assume that $V \in \mathcal{F}L^q (\R^d) (1<q< \infty)$  and
   $u_{i}\in M^{1,1}(\R^d) $ Then  there exists  
    $T^{*}=T^{\ast}(\|u_{i}\|_{M^{1,1} }) $ such that \eqref{gnlkgh} has a unique solution $u\in C([0, T^{*}), M^{1,1} (\mathbb R^d)).$ 
   \item Assume that $V \in \mathcal{F}L^q(\R^d)$  with $ 1<q<2,$ and  $u_{i}\in M^{p,\frac{2r}{2r-1}}(\mathbb R^{d}) \  ( 1 \leq p \leq 2, q<r< \infty).$   Then there exists
\ $T^{*}=T^{\ast}(\|u_{i}\|_{ M^{p,\frac{2r}{2r-1}}})$ such that \eqref{nlkgh} has a unique solution $u\in C([0, T^{*}), M^{p,\frac{2r}{2r-1}}(\mathbb R^{d})).$ 
\item  Assume that $V\in M^{\infty, 1}(\R^d) $  and $u_i \in M^{p,q}(\R^d).$ Then   there exists       $T^{*}=T^{\ast}(\|u_{i}\|_{M^{p,q}})$ such that \eqref{nlkgh} has a unique solution $u\in C([0, T^{*}), M^{p,q} (\mathbb R^d)).$  
   
   \item Assume that $V\in M^{\infty, 1}(\R^d) $  and $u_i \in M^{p,q}(\R^d) \  (1\leq p, q \leq 4, 1 \leq q \leq \frac{2^{2k-2}}{2^{2k-2}-1}, 1<k\in \mathbb N).$ Then   there exists $T^{*}=T^{\ast}(\|u_{i}\|_{M^{p,q}} )$ such that \eqref{gnlkgh} has a unique solution $u\in C([0, T^{*}), M^{p,q} (\mathbb R^d)).$ 
\item Assume that $V\in M^{1, \infty}(\R^d)$  and $u_i \in M^{p,1}(\R^d) \ (1\leq p \leq  \infty).$ Then   there exists
   $T^{*}=T^{\ast}(\|u_{i}\|_{M^{p,1}} )$ such that \eqref{nlkgh} has a unique solution $u\in C([0, T^{*}), M^{p,q} (\mathbb R^d)).$ 
 \end{enumerate} 
\end{Theorem}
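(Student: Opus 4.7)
The plan is to unify all five cases under a single Banach fixed-point scheme, letting only the Hartree-nonlinearity estimate vary case by case. First I would rewrite \eqref{gnlkgh} in Duhamel form
\[
u(t) = K'(t) u_0 + K(t) u_1 - \int_0^t K(t-\tau) \bigl[(V \ast |u|^{2k})(\tau)\, u(\tau)\bigr]\, d\tau =: \mathcal{J}(u),
\]
and seek a fixed point of $\mathcal{J}$ on the closed ball $B_{T,M} = \{ u \in C([0,T], X) : \|u\|_{C([0,T], X)} \leq M\}$, where $X$ is the ambient modulation space specified in each case. The linear terms are handled uniformly by Proposition \ref{wp} \eqref{kb}, which gives $\|K(t) u_1\|_X + \|K'(t) u_0\|_X \leq C_T(\|u_0\|_X + \|u_1\|_X)$ with a polynomial constant $C_T$ in $T$.

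The heart of the proof is the $(2k{+}1)$-linear bound $\|(V \ast |u|^{2k}) u\|_X \lesssim \|u\|_X^{2k+1}$ together with the matching Lipschitz estimate. For case (1), $V \in \mathcal{F}L^q$ and $X = M^{1,1}$: here $M^{1,1}$ is a Fourier-invariant algebra (Lemma \ref{rl} \eqref{fi} and Proposition \ref{gap}), so $|u|^{2k} \in M^{1,1}$; passing to the Fourier side, $\mathcal{F}(V \ast |u|^{2k}) = \hat V \cdot \mathcal{F}(|u|^{2k}) \in L^1$ by H\"older since $\mathcal{F}(|u|^{2k}) \in M^{1,1} \hookrightarrow L^{q'}$; finally the product with $u$ transforms to a convolution of type $L^1 \ast M^{1,1} \subset M^{1,1}$, and inverse Fourier closes the estimate. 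Case (2) follows the same template but with the embedding $M^{p, 2r/(2r-1)} \hookrightarrow \mathcal{F}L^{r}$ supplied by Lemma \ref{rl} \eqref{rcs}; the hypothesis $q < r$ is precisely what makes the H\"older step with $\hat V \in L^q$ close. Cases (3) and (4), with $V \in M^{\infty,1}$, reduce for $k=1$ to Lemma \ref{lcg}; for general $k \geq 2$ one applies the algebra property iteratively to reach $|u|^{2k} \in M^{p,q}$, the constraints on $(p,q)$ in case (4) being exactly those required for the chain of $(2k{-}1)$ multiplications combined with $M^{\infty,1} \ast M^{p,q} \hookrightarrow M^{p,q}$ to stay inside the allowed modulation scale. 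Case (5), with $V \in M^{1,\infty}$, uses the Young-type convolution estimate $\|V \ast f\|_{M^{\infty, 1}} \lesssim \|V\|_{M^{1, \infty}} \|f\|_{M^{\infty, 1}}$ together with Proposition \ref{gap} to close in $M^{p,1}$.

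With these nonlinear estimates in hand, the contraction-map argument is routine: choose $M \simeq C_T(\|u_0\|_X + \|u_1\|_X)$ and $T$ small enough that $T C_T M^{2k} \leq 1/2$, verify that $\mathcal{J}$ maps $B_{T,M}$ into itself and is a contraction, then invoke Banach's fixed-point theorem and the standard blow-up alternative. The main obstacle I anticipate is the index bookkeeping in case (2) and the iterative algebra chain in case (4): in both subcases one has to balance a chain of H\"older exponents, algebra inclusions and Hausdorff--Young-type embeddings so that the Fourier side (respectively the multi-index modulation-space chain) closes in exactly the target space. Everything else is a rerun of the contraction argument already used for Theorem \ref{LW}, with Proposition \ref{wp} \eqref{kb} supplying the uniform (in $t$, polynomially growing) propagator bound that replaces the sharper decay estimate employed in the global small-data results.
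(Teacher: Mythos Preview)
Your proposal is correct and follows essentially the same approach as the paper: the paper's proof is a one-sentence reference to Proposition \ref{wp} together with the nonlinear estimates quoted from \cite{bhimani2018nonlinear} (Lemmas 4.8, 4.9) and \cite{manna2018cauchy} (Lemmas 4.2, 4.3), after which the standard contraction-mapping argument is invoked. Your Duhamel setup, use of Proposition \ref{wp}\eqref{kb} for the propagators, case-by-case sketch of the Hartree bound $\|(V\ast|u|^{2k})u\|_X\lesssim\|u\|_X^{2k+1}$, and closing fixed-point step are exactly this scheme, with more detail than the paper itself supplies.
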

\begin{proof}
Taking Proposition \ref{wp} and  \cite[Lemmas 4.8 and  4.9]{bhimani2018nonlinear}  and \cite[Lemmas 4.2 and 4.3]{manna2018cauchy} into account, the standard fixed point argument gives the desired result.  We will omit the details.
\end{proof}
\begin{Remark}\label{fr} The analogue of Theorem \ref{wht1} is true for equations \eqref{nlw} and \eqref{fHTE}.
\end{Remark}

\noindent
\textbf{Acknowledgments.}  DGB is very grateful to Professor  Kasso Okoudjou for hosting and  arranging research facilities at the University of Maryland.  DGB is  thankful to  SERB Indo-US Postdoctoral Fellowship (2017/142-Divyang G Bhimani) for the financial support. DGB is also thankful to DST-INSPIRE and TIFR CAM for the  academic leave.

\bibliographystyle{amsplain}
\bibliography{nlkgh}

\end{document}